\documentclass[11pt]{article}

\usepackage{amsmath,amsthm}

\usepackage{amssymb,latexsym}

\usepackage{enumerate}

\usepackage[mathscr]{euscript}
\usepackage{amsmath}

\usepackage[colorlinks=true,linkcolor=blue,citecolor=magenta]{hyperref}
\usepackage{todonotes}

 \newcommand{\BB}{{\mathcal  B}}

\newcommand{\EE}{{\mathcal  E}}
\newcommand{\FF}{{\mathcal  F}}
\newcommand{\GG}{{\mathcal  G}}

\newcommand{\TT}{{\mathcal  T}}

\newcommand{\RR}{{\mathcal  R}}

\newcommand{\VV}{{\mathcal  V}}

\newcommand{\BC}{{\mathbb C}}

\newcommand{\BE}{{\mathbb E}}
\newcommand{\BF}{{\mathbb F}}
\newcommand{\BG}{{\mathbb G}}

\newcommand{\BN}{{\mathbb N}}
\newcommand{\BP}{{\mathbb P}}

\newcommand{\BR}{{\mathbb R}}

\newcommand{\fch}{{\mathbf{1}}}

\newcommand{\arrowd}{\mathop{\rightarrow}_d}
\newcommand{\arrowp}{\mathop{\rightarrow}_P}
\newcommand{\arrowdl}{\mathop{\longrightarrow}_d}
\newcommand{\arrowpl}{\mathop{\longrightarrow}_P}
\newcommand{\arroww}{\mathop{\rightarrow}_{w}}
\newcommand{\arrowwl}{\mathop{\longrightarrow}_{w}}

\newtheorem{theorem}{\bf Theorem}[section]
\newtheorem{proposition}[theorem]{\bf Proposition}
\newtheorem{lemma}[theorem]{\bf Lemma}
\newtheorem{corollary}[theorem]{\bf Corollary}

\theoremstyle{definition}
\newtheorem{definition}[theorem]{Definition}
\newtheorem{example}[theorem]{\bf Example}

\newtheorem{remark}[theorem]{Remark}

\numberwithin{equation}{section}

\begin{document}

\title{Stability of Solutions to Semilinear Exterior Dirichlet Problems under Convergence of L\'evy Symbols}

\author{Tomasz Klimsiak\footnote{E:mail address: {\tt tomas@ mat.umk.pl} (corresponding author)}\,,\,\,\, Andrzej Rozkosz, Leszek S\l omi\'nski\\
{\small Faculty of Mathematics and Computer Science,
	Nicolaus Copernicus University} \\
{\small  Chopina 12/18, 87--100 Toru\'n, Poland} }
\date{}
\maketitle
\begin{abstract}
We study stability of semilinear exterior Dirichlet problems driven by possibly nonsymmetric L\'evy operators. 
Let $L^n$ and $L$ have L\'evy–Khintchine symbols $\psi_n$ and $\psi$, respectively. Our basic assumption on the operators is that 
$\psi_n(\xi)\to\psi(\xi)$ for every $\xi\in\mathbb R^d$. Under mild regularity assumptions on the domain and an absolute-continuity 
condition for the resolvent of the limiting process, we prove pointwise and $L^p$-stability of solutions. 
No symmetry, uniform ellipticity, comparability of L\'evy kernels, or common energy space is required; nonzero exterior 
data are allowed; and the monotone nonlinearity is subject to no growth restriction in the solution variable. The framework 
therefore covers local, nonlocal and mixed operators and, in particular, nonlocal-to-local limits, as well as fractional, relativistic, 
anisotropic and nonsymmetric stable-type operators. The proofs combine convergence of L\'evy processes with a detailed analysis 
of exit times and exit positions and a stability theorem for monotone backward stochastic differential equations with varying terminal times. 
In the symmetric case, we also obtain Mosco convergence of the corresponding part forms on $C^0$ domains and give a 
counterexample showing that whole-space Mosco convergence need not pass to part forms on arbitrary open sets.
\end{abstract}
\noindent
{\bf Mathematics Subject Classifications:} Primary 35J61; Secondary 47G30, 60H30.

\section{Introduction}
\label{sec1}

Let $D\subset\BR^d$, $d\geq1$, be a bounded open set. We study 
stability, with respect to the operator, of the semilinear exterior Dirichlet
problem
\begin{equation}
\label{eq1.1}
\begin{cases}
-Lu=f(\cdot,u) & \text{in }D,\\
u=g & \text{in }D^c:=\BR^d\setminus D,
\end{cases}
\end{equation}
where $f:\BR^d\times\BR\rightarrow\BR$,  
$g:\mathbb R^d\rightarrow\BR$ are measurable functions and $L$ is a (possibly nonsymmetric) L\'evy operator. On
$C_c^2(\mathbb R^d)$ such an operator has the form
\begin{align}
\label{eq1.2}
L\varphi(x)
={}&b\cdot\nabla\varphi(x)+\operatorname{Tr}(Q\nabla^2\varphi(x))\nonumber\\
&+\int_{\mathbb R^d}\big(\varphi(x+y)-\varphi(x)
-y\cdot\nabla\varphi(x)\mathbf 1_{\{|y|\leq1\}}\big)\,\nu(dy),
\end{align}
where $b\in\mathbb R^d$, $Q$ is a symmetric, nonnegative definite \(d\times d\) matrix and
$\nu$ is a L\'evy measure, i.e. a positive Borel measure on $\BR^d$ such that
$\nu(\{0\})=0$ and
\[
\int_{\BR^d}(1\wedge |y|^2)\,\nu(dy)<\infty.
\]
Equivalently, the Fourier transform of $L\varphi$ has the form
\begin{equation}
\label{eq1.8}
\widehat{L\varphi}(\xi)=-\psi(\xi)\widehat\varphi(\xi),
\end{equation}
where $\psi$ is the L\'evy--Khintchine symbol of $L$, i.e.
\begin{equation}
\label{eq1.4}
\psi(\xi)=-i(b,\xi)+(\xi,Q\xi)
-\int_{\BR^d}(e^{i(\xi,y)}-1-i(\xi,y)\fch_{\{|y|\le 1\}})
\,\nu(dy),\quad\xi\in\BR^d.
\end{equation}

For $n\geq1$, let $L^n$ be a L\'evy operator with symbol $\psi_n$ and
$u_n$ be a solution of \eqref{eq1.1} with $L$ replaced by $L^n$. Our basic
assumption  on the operators $L^n$ and $L$ is
\begin{enumerate}
\item[(H1)]
$\lim_{n\rightarrow\infty}\psi_n(\xi)=\psi(\xi)$ for every $\xi\in\mathbb R^d$.
\end{enumerate}
The simplest nontrivial case is 
\begin{equation}
\label{eq1.5}
L^n=-(-\Delta)^{\alpha_n/2},\quad L=-(-\Delta)^{\alpha/2}\quad \mbox{with}\quad (0,2)\ni\alpha_n\rightarrow\alpha\in(0,2].
\end{equation}
Then clearly (H1) is satisfied since in this case
\[
\psi_n(\xi)=|\xi|^{\alpha_n},\quad \psi(\xi)=|\xi|^{\alpha},\quad\xi\in\BR^d.
\]
The main question is whether for given data $f,g$  assumption (H1) implies that
\begin{equation}
\label{eq1.6}
u_n(x)\rightarrow u(x)\quad\text{for every }x\in D
\end{equation}
or
\begin{equation}
\label{eq1.7}
u_n\rightarrow u\quad\text{in }L^p(D)
\end{equation}
for some $p\ge1$. 

The point is that (H1) is an intrinsic and very weak mode of convergence. It does not require a common integral representation 
of the operators, convergence or comparability of their Lévy kernels, symmetry, uniform ellipticity, or a common energy space. 
In particular, the diffusion, drift, and jump parts may all vary; a diffusion component may arise as a limit of small jumps, while either the 
diffusion or the jump component may disappear in the limit. Thus local, purely nonlocal, and mixed integro-differential operators may 
occur within the same approximation scheme. This degree of generality is the main source of difficulty in studying stability of solutions of problems of the form \eqref{eq1.1}.

The model family \eqref{eq1.5} already illustrates two basic aspects of the problem. If \(\alpha_n\to\alpha\in(0,2)\), the question concerns stability 
of solutions with respect to the order of the fractional Laplacian, whereas the case \(\alpha_n\to2\) includes the nonlocal-to-local transition. 
More generally, (H1) allows the approximating and limiting operators to have qualitatively different local and nonlocal structures. 
This motivates studying \eqref{eq1.6} and \eqref{eq1.7} directly under convergence of symbols, rather than within a fixed structural class of operators.

The generality of the problem is not confined to the operators. For the monotone nonlinearities considered below, 
we impose no growth restriction on \(f(x,y)\) as \(|y|\to\infty\). Thus our stability results combine a weak mode of convergence 
of the generators with arbitrary monotone growth of the semilinear term and allow nonzero exterior data.

For some specific families of nonlocal operators, stability has usually been
proved by analytic methods adapted to the family under consideration. In the
fractional-to-local regime one can use explicit information about the kernel,
uniform fractional Sobolev estimates, and comparison of the nonlocal and local
energies; see, for example, \cite{BH-S,BdT,JSW}. Variational methods are also
effective for symmetric operators acting on compatible energy spaces, as in
\cite{FGKV}, and recent nonlocal-to-local results for divergence-form
operators obtained in \cite{ADLSV} rely on estimates tailored to that structure.
Compactness and half-relaxed-limit arguments can be used in the viscosity
setting once consistency of the operators, local compactness of the solutions,
and stability of the boundary condition have been established; see, e.g.,
\cite{BarlesImbert2008}. Semilinear fractional problems have also been treated
through the special variational structure of their ground states; see
\cite{BS1,BS2}.

None of these mechanisms follows from (H1) in the present generality. A
variational argument cannot cover arbitrary nonsymmetric L\'evy operators and,
even in the symmetric case, convergence of the whole-space forms does not
automatically pass to their parts on $D$. Pointwise convergence of 
symbols supplies neither uniform regularity of $u_n$ nor estimates comparing
the Green functions of the operators $L^n$. Moreover, the exterior boundary 
condition is itself
operator-dependent: different jump measures transmit the values of $g$ on
$D^c$ into $D$ in different ways. In probabilistic terms, both the first exit
time from $D$ and, when $g\neq0$, the exit position vary with the operator.
Thus neither an energy compactness argument, a direct passage to the limit in
the equation, nor a general viscosity stability theorem is available solely
under (H1).

\subsection{Main results}

The natural formulation of \eqref{eq1.1} in this generality is the
integral equation
\begin{equation}
\label{eq1.11}
u(x)=P_Dg(x)+G_D[f(\cdot,u)](x),\qquad x\in D,
\end{equation}
where $P_D$ and $G_D$ are, respectively, the Poisson and Green operators
associated with $L$ and $D$. This formulation treats local, nonlocal, and
mixed operators in the same way and does not presuppose symmetry or Sobolev
regularity of a solution.

As for the data $f,g$, we assume that 
\begin{enumerate}
\item[(H2)]$f\in C(\bar D\times\BR)$ and $g\in C(\BR^d)$;
	
\item[(H3)] $\big(f(x,y_1)-f(x,y_2)\big)(y_1-y_2)\leq0$ for any  $x\in D$ and $y_1,y_2\in\mathbb R$.
\end{enumerate}
The generality of our stability results lies both in the class of operators and in the nonlinearities.
Alongside the pointwise convergence assumption
{\rm(H1)} on the symbols, we impose no restriction on the growth
of $f(x,y)$ as $|y|\to\infty$, beyond the monotonicity required
by {\rm(H3)}. In particular, neither a Lipschitz condition nor
a polynomial or subcritical growth bound is needed.

The general pointwise stability result also allows unbounded exterior data
under suitable pointwise integrability and uniform-integrability assumptions.
For bounded exterior data, however, the result takes a particularly simple
form. Suppose that $D$ is of class $C^0$,
that the resolvent kernel of the limiting operator is absolutely continuous (i.e. the so-called condition (ACP) is satisfied)
and that $g\in C_b(\mathbb R^d)$. Then (H1)--(H3) imply (\ref{eq1.6}). Notice that no differentiability of $\partial D$ is required in
\eqref{eq1.6}.

Condition (ACP) holds, in particular, if the limiting symbol satisfies
the Hartman--Wintner condition
\begin{equation}
\label{eq1.9}
\lim_{|\xi|\to\infty}
\frac{\operatorname{Re}\psi(\xi)}{\log(1+|\xi|)}=\infty.
\end{equation}

The $L^p$ version of the stability result holds under weaker assumptions on $f$. Let $1\leq p<\infty$  and assume that
\begin{equation}
\label{eq1.10}
f(\cdot,y)\in L^1(D)\mbox{ for every }y\in\BR,\quad f(\cdot,0)\in L^p(D),
\quad f-f(\cdot,0)\in C(\bar D\times\BR).
\end{equation}
Then, under {\rm(H1)}, {\rm(H3)}, {\rm(ACP)}, and the same
conditions on $D$ and $g$ as above, \eqref{eq1.7} holds.
The convergence can be strengthened to pointwise convergence when the Green kernels of the limiting and approximating 
operators are uniformly bounded in $L^q$  for some $q>1$ and $f$ satisfies (\ref{eq1.10}) with $p$ being H\"older conjugate of $q$.

The results described  above are formulated for solutions of the integral equation
\eqref{eq1.11}, but they also apply to standard analytic notions of solution.
If the sector condition
\begin{equation}
\label{eq1.3}
|\operatorname{Im}\psi|\leq c\operatorname{Re}\psi
\end{equation}
holds, the operator $L$ is associated with a (possibly nonsymmetric) Dirichlet form.
We show  that under the transience and integrability assumptions specified in
Section~\ref{sec3}, every weak Sobolev solution has a version satisfying
\eqref{eq1.11}. We also prove that bounded continuous viscosity solutions
with bounded continuous right-hand side satisfy (\ref{eq1.11}).
Consequently, under the additional conditions stated above, \eqref{eq1.6} and \eqref{eq1.7} are also 
stability statements for weak or viscosity solutions. The use of the integral formulation is therefore a way of unifying analytic solution concepts rather than replacing them by a different problem.

Our main stability result applies to wide classes of operators, for instance fractional Laplace operators (\ref{eq1.5}),  multifractal operators,
relativistic stable operators, sums of local and nonlocal operators, logarithmic operators of Schr\"odinger type, anisotropic stable operators with varying spectral
measures or nonsymmetric stable-like operators. The approximating and
limiting operators may thus belong to qualitatively different classes. To the best of our knowledge, no previous result simultaneously covers this range 
of operators and provides pointwise or $L^p$ convergence for semilinear Dirichlet problems with nonzero exterior data and monotone nonlinearities without growth restrictions.

There is a further, entirely variational consequence. When $\psi_n$ and
$\psi$ are real-valued, (H1) implies Mosco convergence of the corresponding
whole-space Dirichlet forms (see \cite{CS}). We prove that under (ACP)  the parts of these forms on $C^0$ domains
also converge in the sense of Mosco. Such a  convergence result is
stated in \cite[Theorem 3.3]{CS} for arbitrary open sets
$D\subset\BR^d$. We give a counterexample showing that this assertion
does not hold in that generality: the whole-space forms may converge while
their parts fail the Mosco lower bound. Thus some regularity of $D$  is
indispensable; taking the part on $D$ is not a continuous operation for an
arbitrary open set. 

\subsection{Idea of the proof}

In our proofs of the main results of the paper we use a probabilistic approach. It allows us to overcome the compactness 
and boundary difficulties that arise under the weak convergence assumption (H1).
Let $X^{x}$ (resp. $X^{x,n}$) be a L\'evy process starting from $x$ with the L\'evy--Khintchine exponent 
$\psi$ (resp. $\psi_n$). Denote by $\tau^x_D$ the first exit time of $X^x$ from $D$, i.e. 
\[
\tau^x_D=\inf\{t>0:X^x_t\notin D\},
\]
and by $\tau^{x,n}_D$ denote the first exit time of $X^{x,n}$ from $D$. Then  \eqref{eq1.11} can be written in the form
\begin{equation}
\label{eq1.12}
u(x)=\BE g(X^x_{\tau^x_D})+\BE\int_0^{\tau^x_D}f(X^x_t,u(X^x_t))\,dt
\end{equation}
($\BE$ denotes the expectation), and the  solution $u_n$ of (\ref{eq1.1}) 
with \(L\) replaced by \(L^n\) can be written as
\begin{equation}
\label{eq1.13}
	u_n(x)=\BE g(X^{x,n}_{\tau_D^{x,n}})+\BE\int_0^{\tau^{x,n}_D}f(X^{x,n}_t,u_n(X^{x,n}_t))\,dt,
\end{equation}
where $\tau^{x,n}_D$ is the first exit time of $X^{x,n}$ from $D$. It is known that (H1) implies the convergence of $X^{x,n}$ to $X^{x}$ in the Skorokhod space, but not automatically of their exit times $\tau^{x,n}_D$ or exit positions $X^{x,n}_{\tau^{x,n}_D}$ to the corresponding quantities for  $X^x$.
These are precisely the quantities that enter \eqref{eq1.12}, \eqref{eq1.13}. We identify
the path mechanisms responsible for the possible failure of such convergences
and formulate conditions, in terms of the limiting process and the geometry
of $\partial D$, which exclude them. If $D$ is of class $C^0$, then these conditions are satisfied for any $\psi$ for which (ACP) is satisfied. The proof of this fact is based on some structural properties of L\'evy processes.

Once the  convergence of the relevant exit quantities is established, a new
stability theorem for monotone backward stochastic differential equations
with varying terminal times permits us to  pass from (\ref{eq1.13}) to  \eqref{eq1.12}.
In analytic terms, this yields the convergence of solutions of the nonlinear 
equations of the form \eqref{eq1.11}, but involving Poisson and Green kernels for $L^n$, to the solution of \eqref{eq1.11} without requiring any estimates for the Green or Poisson kernels themselves.

The idea of proving stability results by using convergence results for processes and  backward stochastic 
differential equations has appeared in the recent paper \cite{RS2} in  which semilinear evolution equations on the whole space are treated. 
The new issue in the present paper is the exterior Dirichlet condition: the first exit time and,
when $g\neq0$, the exit position both vary with the operator. The problem of convergence of these quantities is rather subtle. Related
convergence questions for reflected processes and linear Neumann problems are
studied by A.~Rozkosz and L.~S\l omi\'nski in the preprint \emph{Reflected Skorokhod equations and the Neumann boundary value problem for elliptic equations with L\'evy-type operators} (arXiv:2303.12664, 2023), where the boundary mechanism is different.

Although the results are stated for monotone nonlinearities,
the probabilistic approach is not confined to absorption problems.
The representation of solutions and the analysis of exit times
and exit positions provide a framework for treating other
nonlinearities, provided that suitable well-posedness and stability
estimates can be established for the associated BSDEs.

\section{Stability of solutions of BSDEs}
\label{sec2}

We denote by $D([0,T];\BR^d)$ the space of $\BR^d$-valued c\`adl\`ag functions on $[0,T]$ (right continuous with left-hand limits). 
We equip it with the Skorokhod $J_1$ topology. For a definition of this topology and its properties we refer to \cite{JS}. For a process  
$X$ with paths in $D([0,T];\BR)$ we set  $\Delta  X_t=X_t-X _{t-}$ for $t\in[0,T]$,
with the convention $X_{0-}=0$.

\subsection{BSDEs with random terminal time}

In this section $(\Omega_0,\mathcal G,\mathbb G=(\mathcal G_t)_{t\ge0},\BP)$ is a complete 
filtered probability space satisfying the usual hypotheses (see, e.g., \cite[Section I.1]{P}). 

Recall that an  $\BG$-adapted process $Y$ is of
Doob class D  if the collection $\{Y_{\tau},\tau\in\TT\}$,
where $\TT$ is the set of all almost surely finite   $\mathbb G$-stopping times,
is uniformly integrable.

\begin{definition}
Let  $\tau\in\TT$, $\zeta$ be a $\GG_{\tau}$-measurable  random variable,
$V$ be a finite-variation $\BG$-adapted c\`adl\`ag process with $V_0=0$,
and $X$ be a $\BG$-adapted c\`adl\`ag process such that $\mathbb P(X_t\in D,\, t\in [0,\tau))=1$.
We say that  a pair $(Y,M)$ of $\mathbb G$-adapted c\`adl\`ag processes is a solution of the backward
stochastic differential equation with terminal condition $\zeta$  at
terminal time $\tau$  and coefficient $f+dV$
(BSDE$^\tau(\zeta,f+dV,X,\BG,\BP)$ for short) if
\begin{enumerate}
\item[(a)]$Y$ is of class D  and  $M$ is a local martingale with $M_0=0$,
\item[(b)]  $\BP\left( \int_0^\tau|f(X_t,Y_t)|\,dt<\infty\right)=1$ 
and
\[
Y_t=\zeta+\int^{\tau}_{t}f(X_s,Y_s)\,ds+(V_\tau-V_t)
-\int^{\tau}_{t}dM_s,\quad t\in[0,\tau],\quad
\BP\mbox{-a.s.}
\]
\end{enumerate}
\end{definition}

\begin{remark}
It is known (see, e.g., \cite{KR:NoD}) that if a pair $(Y,M)$ is a solution of 
 BSDE$^\tau(\zeta,f+dV,X,\BG,\mathbb P)$ such that  
\begin{equation}
\label{eq2.18}
\BE\int^{\tau}_0|f(X_s,Y_s)|\,ds<\infty,
\end{equation} 
and $\mathbb E|V|_\tau<\infty$,   then $M$ is a uniformly integrable martingale. We write \(|V|_t\) for the total variation of \(V\) on \([0,t]\).
\end{remark}

\begin{theorem}
\label{th2.2}
Let  $\tau\in\mathcal T$, $\zeta$ be an $\GG_{\tau}$-measurable  random variable,
$V$ be a finite-variation $\mathbb G$-adapted c\`adl\`ag process with $V_0=0$
and $f:D\times\BR\to\BR$ be a measurable function 
satisfying \mbox{\rm(H3)}. Then 
\begin{enumerate}[\rm(i)]
\item There exists at most one solution to $\operatorname{BSDE}^\tau(\zeta,f+dV,X,\BG,\BP)$.

\item If additionally  $f(x,\cdot)$ is continuous for every $x\in D$, $\BE\int^{\tau}_0|f(X_t,y)|\,dt<\infty$
for every $y\in\BR$ and $\BE|\zeta|+\mathbb E|V|_\tau<\infty$, then there exists a  solution $(Y,M)$ 
of $\operatorname{BSDE}^\tau(\zeta,f+dV,X,\BG,\BP)$. 
Moreover, $M$ is a uniformly integrable martingale and
\[
\BE\int^{\tau}_0|f(X_t,Y_t)|\,dt\le \mathbb E|\zeta|+2\BE\int^{\tau}_0|f(X_t,0)|\,dt
+\BE|V|_\tau.
\]
\end{enumerate}
\end{theorem}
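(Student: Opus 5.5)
For uniqueness (i), take two solutions $(Y^1,M^1)$ and $(Y^2,M^2)$ and set $\bar Y=Y^1-Y^2$, $\bar M=M^1-M^2$. Then
\[
\bar Y_t=\int_t^\tau\big(f(X_s,Y^1_s)-f(X_s,Y^2_s)\big)\,ds-\int_t^\tau d\bar M_s ,\qquad t\in[0,\tau],
\]
because $\zeta$ and $V$ cancel. I will apply the Meyer--Tanaka formula to $|\bar Y|$, or equivalently the It\^o formula for $(\bar Y^+)$, between $t\wedge\sigma_k$ and $\tau\wedge\sigma_k$. Here $\sigma_k$ is a localizing sequence of stopping times for $\bar M$ chosen so that the integrals are finite. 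This gives
\[
|\bar Y_{t\wedge\sigma_k}|\le |\bar Y_{\tau\wedge\sigma_k}|+\int_{t\wedge\sigma_k}^{\tau\wedge\sigma_k}\mathrm{sgn}(\bar Y_{s-})\big(f(X_s,Y^1_s)-f(X_s,Y^2_s)\big)\,ds-\int_{t\wedge\sigma_k}^{\tau\wedge\sigma_k}\mathrm{sgn}(\bar Y_{s-})\,d\bar M_s .
\]
By (H3) the $ds$-integrand is $\le0$; since $\bar Y$ has countably many jumps, $\bar Y_{s-}$ may be replaced by $\bar Y_s$ under $ds$. Taking expectations gives $\BE|\bar Y_{t\wedge\sigma_k}|\le\BE|\bar Y_{\tau\wedge\sigma_k}|$. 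Since $\bar Y$ is of class D and $\bar Y_\tau=0$, letting $k\to\infty$ yields $\bar Y_t=0$. Then $\bar M=0$, because it is a local martingale expressible through continuous finite-variation terms that vanish.

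For existence (ii), I first remove $V$ and $\zeta$ from the dynamics. Let $N_t=\BE(\zeta+V_\tau\,|\,\GG_t)$, which is a uniformly integrable martingale by the integrability assumptions. The substitution $\tilde Y=Y-(N-V)$ reduces the problem to a BSDE with zero terminal value and generator $\tilde f(s,y)=f(X_s,y+N_s-V_s)$, which is random but still monotone and continuous in $y$. Next I approximate: truncate via $f_m=T_m(f)$, or better use a Yosida-type (inf-convolution) regularization of the monotone map, to obtain Lipschitz monotone generators. Lipschitz BSDEs with random terminal time and $L^1$ data are solvable in class D, for example by Picard iteration with the exponential weight $e^{\lambda t}$ on the horizon $\tau\wedge n$ followed by $n\to\infty$; the $L^1$ theory uses class D arguments rather than $L^2$.

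The a priori bound is the same Tanaka computation, but with the comparison now made against $y=0$. Writing $\mathrm{sgn}(Y_s)f(X_s,Y_s)=\mathrm{sgn}(Y_s)\big(f(X_s,Y_s)-f(X_s,0)\big)+\mathrm{sgn}(Y_s)f(X_s,0)$, monotonicity gives
\[
\mathrm{sgn}(Y_s)\big(f(X_s,Y_s)-f(X_s,0)\big)=-|f(X_s,Y_s)-f(X_s,0)|,
\]
when $Y_s\ne0$, the same sign convention being fixed at $0$. Hence
\[
\BE\int_0^\tau|f(X_s,Y_s)-f(X_s,0)|\,ds\le\BE|\zeta|+\BE|V|_\tau+\BE\int_0^\tau|f(X_s,0)|\,ds ,
\]
and the triangle inequality gives the stated bound with constant $2$. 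This bound holds uniformly for the approximations.

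To pass to the limit I will use monotone convergence. I choose approximations $f_m$ that are monotone in $m$, so the solutions $Y^m$ are monotone in $m$ by comparison (proved again via Tanaka). I also use the integrability $\BE\int_0^\tau|f(X_t,y)|\,dt<\infty$ for each fixed $y$. Together with the uniform $L^1$ bound and localization on $\{\sup_t|Y_t|\le c\}$, which is where continuity of $f(x,\cdot)$ enters, this lets me identify the limit of $f_m(X,Y^m)$ with $f(X,Y)$ in $L^1(ds\otimes d\BP)$. Class D of the limit follows from the representation $Y_t=\BE\big(\zeta+\int_t^\tau f\,ds+V_\tau-V_t\,\big|\,\GG_t\big)$. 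Finally, $M$ is a uniformly integrable martingale by the Remark, since $\BE|V|_\tau<\infty$ and \eqref{eq2.18} holds.

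\textbf{Main obstacle.} I expect the hardest step to be this limit passage without any growth condition on $f$. The key tool is the a priori bound, which makes $\{f_m(X,Y^m)\}$ bounded in $L^1$ but not obviously uniformly integrable. To get uniform integrability I would use the fact that $Y^m$ is locally bounded, via stopping times $\sigma_c=\inf\{t:|Y_t|>c\}$, combined with the assumption $\BE\int_0^\tau|f(X_t,\pm c)|\,dt<\infty$, which dominates $f_m$ on $[0,\sigma_c]$.
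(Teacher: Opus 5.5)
Your uniqueness argument and your derivation of the a priori bound are correct. They are the same Tanaka--Meyer/monotonicity computation the paper uses in Proposition~\ref{prop2.4}. For Theorem~\ref{th2.2} itself the paper gives no argument and only quotes Klimsiak's results. Your existence part, however, has a genuine gap.

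\textbf{The reduction step.} A first, smaller problem is the reduction $\tilde Y=Y-(N-V)$. Integrability of $f(X_t,y)$ is assumed only for deterministic $y$, so the reduced generator $f(X_s,y+N_s-V_s)$ does not satisfy the hypotheses of (ii). Neither your a priori bound nor the domination by $|f(X_t,\pm c)|$ applies to it. For example, take $f(x,y)=-y$, $V=0$ and $\zeta=\tau$. All hypotheses of (ii) hold if $\BE\tau<\infty$. Yet the reduced generator at $y=0$ is $-N_s$, and
\[
\BE\int_0^\tau N_s\,ds=\BE\tau^2,
\]
which may be infinite. You should drop the reduction and keep $\zeta$ and $dV$ in the approximating equations.

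\textbf{The passage to the limit.} This is the main gap. None of the regularizations you mention yields Lipschitz, nonincreasing generators that are ordered in $m$.
\begin{enumerate}
\item[(1)] The truncation $(f\wedge m)\vee(-m)$ is not Lipschitz in general and is not ordered in $m$.
\item[(2)] The Yosida approximations satisfy $|f_\lambda|\uparrow|f|$ with the sign of $f$ preserved. So they lie below $f$ where $f>0$ and above $f$ where $f<0$, and are not ordered.
\item[(3)] The inf-convolution $\inf_z\{f(x,z)+m|y-z|\}$ is ordered. But it is identically $-\infty$ as soon as $f(x,\cdot)$ decreases superlinearly, which is allowed since no growth condition is imposed.
\end{enumerate}
Hence the step ``the $Y^m$ are monotone in $m$ by comparison'' has no valid instance, and no limit $Y$ has been produced.

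\textbf{A possible repair.} First truncate in the $y$-variable:
\[
f^c(x,y)=f(x,(y\wedge c)\vee(-c)).
\]
This is continuous, nonincreasing, and bounded by $|f(x,c)|+|f(x,-c)|$. Its inf-convolution is now finite, and your monotone argument plus dominated convergence gives a solution $Y^c$.

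To let $c\to\infty$ you need a majorant independent of $c$. It is
\[
|Y^c_t|\le\Gamma_t:=\BE\Big(|\zeta|+\int_0^\tau|f(X_s,0)|\,ds+|V|_\tau\,\Big|\,\GG_t\Big),
\]
which follows from the same Tanaka computation together with $\mathrm{sgn}(y)f^c(x,y)\le|f(x,0)|$. Set $\sigma_c=\inf\{t:\Gamma_t>c\}$. Then $Y^c$ solves the untruncated equation on $[0,\sigma_c\wedge\tau]$, with terminal value $Y^c_{\sigma_c\wedge\tau}$, and $\BP(\sigma_c<\tau)\to0$.

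\textbf{Uniform integrability.} The majorant $\Gamma$ is also what your uniform-integrability step is missing. Localization controls only $[0,\sigma_c]$, and local boundedness of each $Y^m$ separately does not suffice. The tail must be controlled by the stopping-time version of your bound: for any stopping time $\sigma$,
\[
\BE\int_{\sigma\wedge\tau}^{\tau}|f(X_s,Y_s)|\,ds\le\BE\Big(\fch_{\{\sigma<\tau\}}|\zeta|+2\int_{\sigma\wedge\tau}^{\tau}|f(X_s,0)|\,ds+\int_{\sigma\wedge\tau}^{\tau}d|V|_s\Big).
\]
This is small uniformly in the approximation only because $\sigma_c$ does not depend on it. The same majorant gives class D of the limit directly, without appealing to the representation formula.
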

\begin{proof}
The desired result follows from
\cite[Propositions 2.4 and 2.8]{K:SPA} with $S=0$ (or   
\cite[Theorem 3.7]{KR:CM} in the case $\zeta=0$). 
\end{proof}

Let $\zeta=g(X_{\tau})$. Of course, if $\BE(|\zeta|+\tau)<\infty$ and $f,g$ satisfy (H2) and (H3), 
then $\zeta,f$ satisfy the assumptions of the above theorem.  

Let us recall that
a function  $F:[0,\infty)\to[0,\infty)$ with $F(0)=0$ is called moderate if there exists $c>0$ such that
$F(2x)\le cF(x),\, x>0$.

\begin{proposition}
\label{prop2.4}
Let  $\tau\in\TT$, $\zeta,\zeta'$ be $\GG_\tau$-measurable random variables such that
$\BE|\zeta|+\BE|\zeta'|<\infty$, $V,V'$ be  finite-variation $\mathbb G$-adapted c\`adl\`ag processes with $V_0=V'_0=0$, and $f,f':D\times\BR\to\BR$ 
be  measurable functions such that $f(x,\cdot),f'(x,\cdot)$ are continuous for every $x\in D$ and  $f$ satisfies \mbox{\rm(H3)}. 
Let $(Y,M)$ be a solution to $\operatorname{BSDE}^\tau(\zeta,f+dV,X,\mathbb G,\BP)$.  
Then
\begin{enumerate}[\rm(i)]
\item For every moderate convex function
$F:[0,\infty)\to[0,\infty)$ there exists a
constant $C_F>0$ depending only on $F$ such that for any solution $(Y',M')$ of  
$\operatorname{BSDE}^\tau(\zeta',f'+dV',X,\mathbb G,\BP)$,
\begin{align*}
&\BE\Big[
F\Big(\int_0^\tau |f(X_s,Y_s)-f'(X_s,Y'_s)|\,ds\Big)\Big]
\\&\quad\quad\quad\leq
C_F\,
\BE\Big[F\Big(
|\zeta-\zeta'|+\int_0^\tau |f(X_s,Y'_s)-f'(X_s,Y'_s)|\,ds+|V-V'|_\tau\Big)\Big].
\end{align*}

\item For any solution $(Y',M')$ of  $\operatorname{BSDE}^\tau(\zeta',f'+dV',X,\BG,\BP)$ and any $q\in (0,1)$ one has
\[
\BE\sup_{t\le\tau}|Y_t-Y'_t|^q\le c_q \Big( \mathbb E\Big[|\zeta-\zeta'|+\int_0^\tau |f(X_s,Y'_s)-f'(X_s,Y'_s)|\,ds+|V-V'|_\tau\Big]\Big)^q.
\]

\item For any solution $(Y',M')$ of  $\operatorname{BSDE}^\tau(\zeta',f'+dV',X,\mathbb G,\BP)$ and any $p>1$ one has
\[
\BE\sup_{t\le\tau}|Y_t-Y'_t|^p\le c_p  \mathbb E\Big(|\zeta-\zeta'|+\int_0^\tau |f(X_s,Y'_s)-f'(X_s,Y'_s)|\,ds+|V-V'|_\tau\Big)^p.
\]
\end{enumerate}
\end{proposition}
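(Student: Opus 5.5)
We work with the differences $\bar Y=Y-Y'$, $\bar M=M-M'$, $\bar V=V-V'$, $\bar\zeta=\zeta-\zeta'$. Subtracting the two equations gives
\[
\bar Y_t=\bar\zeta+\int_t^\tau \big(f(X_s,Y_s)-f'(X_s,Y'_s)\big)\,ds+(\bar V_\tau-\bar V_t)-\int_t^\tau d\bar M_s .
\]
We split $f(X_s,Y_s)-f'(X_s,Y'_s)=\big(f(X_s,Y_s)-f(X_s,Y'_s)\big)+\delta_s$ with $\delta_s:=f(X_s,Y'_s)-f'(X_s,Y'_s)$. By (H3) the first bracket has sign opposite to $\bar Y_s$. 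The plan is to apply the Tanaka--Meyer formula to $|\bar Y|$ between $t$ and $\tau$, using $\operatorname{sgn}(\bar Y_{s-})$ and $\operatorname{sgn}(0)=0$. The jump and local-time terms enter with a favourable sign, and we obtain, for stopping times $t\le\tau$,
\begin{align*}
|\bar Y_t|+\int_t^\tau |f(X_s,Y_s)-f(X_s,Y'_s)|\,ds\le |\bar\zeta|+\int_t^\tau|\delta_s|\,ds+|\bar V|_\tau-|\bar V|_t-\int_t^\tau \operatorname{sgn}(\bar Y_{s-})\,d\bar M_s .
\end{align*}
The identity $\operatorname{sgn}(\bar Y_s)(f(X_s,Y_s)-f(X_s,Y'_s))=-|f(X_s,Y_s)-f(X_s,Y'_s)|$ holds whenever $\bar Y_s\neq0$. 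On $\{\bar Y_s=0\}$ the integrand difference vanishes. Hence the drift term yields exactly the absolute value.

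Set $A:=|\bar\zeta|+\int_0^\tau|\delta_s|\,ds+|\bar V|_\tau$. For part (i), take $t=0$ and localize with a sequence $\sigma_k\uparrow\tau$ reducing the local martingale. The inequality then reads $\int_0^{\sigma_k}|f(Y)-f(Y')|\,ds\le |\bar Y_{\sigma_k}|+A-N_{\sigma_k}$ with $N$ a local martingale. Since $\int_0^\tau|f(X_s,Y_s)-f'(X_s,Y'_s)|\,ds\le\int_0^\tau |f(Y)-f(Y')|\,ds+\int_0^\tau|\delta_s|\,ds$, it suffices to bound $\BE F$ of $\int_0^\tau|f(Y)-f(Y')|\,ds$ by $C\,\BE F(A)$, and moderate growth absorbs the extra term.

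For this we use the standard device for moderate convex $F$: a Garsia--Neveu/Lenglart-type estimate. The inequality at stopping times gives, for the increasing process $K_t=\int_0^t|f(Y)-f(Y')|\,ds$, the bound $\BE[K_\tau-K_{\sigma}\mid\GG_\sigma]\le \BE[A+|\bar Y_\tau|\mid\GG_\sigma]\le 2\BE[A\mid\GG_\sigma]$. Here $|\bar Y_\tau|=|\bar\zeta|\le A$, and passing to the limit in $k$ uses class D of $Y,Y'$. The Garsia--Neveu lemma then gives $\BE F(K_\tau)\le C_F\BE F(A)$. We expect this step, justifying the conditional inequality at arbitrary stopping times, to be the main obstacle: class D and the uniform integrability in the localization are what allow removing $N$, and $\BE A<\infty$ may be assumed, otherwise there is nothing to prove.

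For parts (ii) and (iii), the same inequality with $t$ a stopping time and the same localization yield $|\bar Y_t|\le\BE[A\mid\GG_t]$ for $t\le\tau$. Thus $\sup_{t\le\tau}|\bar Y_t|\le\sup_t \BE[A\mid\GG_t]$, the supremum of a uniformly integrable martingale. For $p>1$, Doob's $L^p$ inequality gives (iii). For $q\in(0,1)$, the weak-type Doob inequality $\BP(\sup_t\BE[A\mid\GG_t]>\lambda)\le \BE A/\lambda$, integrated against $q\lambda^{q-1}d\lambda$ with the splitting at $\lambda=\BE A$, gives $\BE\sup|\bar Y|^q\le c_q(\BE A)^q$, which is (ii).
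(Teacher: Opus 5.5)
Your proposal is correct and follows essentially the paper's route. Both arguments apply Tanaka--Meyer to $|Y-Y'|$ together with (H3), and remove the localization using class D to obtain the conditional inequality at arbitrary stopping times. Both then use the Garsia--Neveu lemma for (i) and maximal inequalities for (ii) and (iii); for (ii) the paper quotes Lemma 6.1 of the BDHPS reference, whose proof is exactly your weak-type Doob computation.

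The only difference is cosmetic. You apply Garsia--Neveu to $\int_0^{\cdot}|f(X_s,Y_s)-f(X_s,Y'_s)|\,ds$ and then add $\int_0^\tau|\delta_s|\,ds$ using moderateness of $F$. The paper instead absorbs $\delta$ into $U$ with a factor $2$ and applies the lemma directly to $\int_0^{\cdot}|f(X_s,Y_s)-f'(X_s,Y'_s)|\,ds$.
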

\begin{proof}
We assume that $\BE(|\zeta-\zeta'|+\int_0^\tau |f(X_s,Y'_s)-f'(X_s,Y'_s)|\,ds+|V-V'|_\tau)<\infty$, 
otherwise the asserted inequalities trivially 
hold. By (H3) and the Tanaka--Meyer  formula (see, e.g., \cite[p. 216]{P}) applied  to $|Y-Y'|$  we get, after a standard localization argument, that for any $\BG$-stopping time $\sigma$,
\begin{align*}
&|Y_{\sigma\wedge\tau}-Y'_{\sigma\wedge\tau}|\\
&\qquad\le \BE\big( |\zeta-\zeta'|+\int_{\sigma\wedge\tau}^\tau 
\operatorname{sgn}(Y_s-Y_s')(f(X_s,Y_s)-f'(X_s,Y'_s))\,ds
+|V-V'|_\tau\big|\,\GG_\sigma\Big)\\
&\qquad=
\BE\Big(|\zeta-\zeta'|-\int_{\sigma\wedge\tau}^\tau \left|f(X_s,Y_s)-f'(X_s,Y'_s)
+[f'(X_s,Y'_s)-f(X_s,Y'_s)]\right|\,ds\big|\,\GG_\sigma\Big)\\
&\qquad\quad-\BE\Big(\int_{\sigma\wedge\tau}^\tau  \operatorname{sgn}(Y_s-Y_s')[f'(X_s,Y'_s)-f(X_s,Y'_s)]\,ds-|V-V'|_\tau\,\big|\, \GG_\sigma\Big)\\
&\qquad\le 
\BE\Big( |\zeta-\zeta'|-\int_{\sigma\wedge\tau}^\tau \left|f(X_s,Y_s)-f'(X_s,Y'_s)\right|\,ds\,\big|\,\GG_\sigma\Big)\\
&\quad\qquad+2\BE \Big(\int_{0}^\tau |f'(X_s,Y'_s)-f(X_s,Y'_s)|\,ds+|V-V'|_\tau\,\big|\, \GG_\sigma\Big).
\end{align*}
Hence
\begin{align}
\label{eq2.1}
&|Y_{\sigma\wedge\tau}-Y'_{\sigma\wedge\tau}|
+\BE\Big(\int_{\sigma\wedge\tau}^\tau \left|f(X_s,Y_s)-f'(X_s,Y'_s)\right|\,ds\,\big|\, \GG_\sigma\Big)\nonumber\\
 &\qquad\le \mathbb E\Big(|\zeta-\zeta'|+2\int_{0}^\tau |f'(X_s,Y'_s)-f(X_s,Y'_s)|\,ds+2|V-V'|_\tau\,\big|\,\GG_\sigma\Big).
\end{align}
Define
\[
A_t=\int_0^{t\wedge\tau}|f(X_s,Y_s)-f'(X_s,Y'_s)|\,ds,
\qquad t\ge 0,
\]
and
\[
U=|\zeta-\zeta'|+2\int_0^\tau |f'(X_s,Y'_s)-f(X_s,Y'_s)|\,ds+2|V-V'|_\tau.
\]
Then $A$ is continuous, adapted and increasing, $A_0=0$, and
\eqref{eq2.1} implies that
\[
\BE\left[A_\infty-A_\sigma\right]\leq\mathbb E(\fch_{\{\sigma<\infty\}}U).
\]
The Garsia--Neveu lemma
\cite[Lemma 1.2(b), p.~31]{LLP}
therefore gives (i).  Furthermore, 
\eqref{eq2.1}  when combined with  \cite[Lemma 6.1]{BDHPS} yields (ii), and
when combined with  Doob's $L^p$-inequality  yields (iii).
\end{proof}

\begin{remark}
The conclusion of \cite[Theorem 2.6]{RS2} remains valid
if assumption {\rm(H5)} therein is replaced by
\[
\tag{A}\int_0^T F_r(t)\,dt<\infty,\qquad
F_r(t):=\sup_{|x|+|y|\le r}
|f(t,x,y)-f(t,x,0)|,\qquad r>0.
\]
Indeed, define $f_m$ and $Y^n, Y^{n,(m)}$ as in Step~2  of the proof of \cite[Theorem 2.6]{RS2} and then set
\[
\begin{aligned}
a_s^{n,m}
&=f(s,X_s^n,Y_s^n)
   -f_m(s,X_s^n,Y_s^{n,(m)}),\\
b_s^{n,m}
&=f(s,X_s^n,Y_s^{n,(m)})
   -f_m(s,X_s^n,Y_s^{n,(m)})\ge 0
\end{aligned}
\]
(the last inequality follows from $f_m\le f$; see \cite[(2.12)]{RS2}).
Since $Y^n\ge Y^{n,(m)}$,  monotonicity gives
$(a_s^{n,m})^+\le b_s^{n,m}$.
The local domination {\rm(A)}, convergence of $(X^n,\mathbb F^n)$,  the boundedness
in \cite[(2.14)]{RS2}, and the locally uniform approximation
$f_m\to f$ imply that for every $\varepsilon>0$,
\[
\lim_{m\to\infty}\limsup_{n\to\infty}
\BP\Big(\int_0^T b_s^{n,m}\,ds>\varepsilon\Big)=0.
\]
Moreover, the bounded-data assumptions of Step~2 and
Proposition \ref{prop2.4}(i), applied with $F(u)=u^2$
to each solution and the zero solution, yield
\[
\sup_{n,m}\mathbb E\Big(\int_0^T|a_s^{n,m}|\,ds\Big)^2<\infty.
\]
Therefore applying Vitali's theorem gives
\[
\lim_{m\to\infty}\limsup_{n\to\infty}
\mathbb E\int_0^T(a_s^{n,m})^+\,ds=0.
\]
By using It\^o's formula and the fact that $Y^n\ge Y^{n,(m)}$, we deduce from the equation satisfied by $Y^n-Y^{n,(m)}$ that \cite[(2.18)]{RS2} holds with $|a_s^{n,m}|$
replaced by $(a_s^{n,m})^+$, which yields \cite[(2.19)]{RS2}. 
Moreover, since the terminal conditions coincide,
\[
\mathbb E\int_0^T a_s^{n,m}\,ds
=\mathbb E(Y_0^n-Y_0^{n,(m)})\ge0,
\]
and consequently
\[
\mathbb E\int_0^T|a_s^{n,m}|\,ds=2\mathbb E\int_0^T(a_s^{n,m})^+\,ds-\mathbb E\int_0^Ta_s^{n,m}\,ds
\le2\mathbb E\int_0^T(a_s^{n,m})^+\,ds.
\]
This also gives the convergence  needed for the martingale estimates.
The reduction to bounded data in Step~3 is unchanged.
\end{remark}

\begin{remark}
\label{rem2.6}
 Let $\tau\in\mathcal T$, with $\mathbb P(\tau>0)=1$.
In what follows with  any solution $(Y,M)$ of BSDE$^\tau(\zeta,f,X,\BG,\BP)$ we associate  the pairs $(Y^{[\tau]},M^{[\tau]})$ 
and $(Y^{(\tau)},M^{(\tau)})$ defined on $[0,\infty)$
by the formula 
\[
(Y^{(\tau)}_t,M^{(\tau)}_t):= (Y_{t\wedge \tau},M_{t\wedge \tau}),\quad (Y^{[\tau]}_t,M^{[\tau]}_t):= (Y_{t\wedge \tau}\fch_{[0,\tau)}(t) ,M_{t\wedge \tau}),\quad t\ge 0.
\]
We will frequently use  the following observation:
\[
Y^{[\tau]}_t=\int_t^\infty \fch_{\{s<\tau\}}f(X_s,Y^{[\tau]}_s)\,ds+\int_t^\infty\,dV_s-\int_t^\infty\,dM^{[\tau]}_s,\quad t\ge 0,
\]
where $V_t:= \fch_{\{t\ge \tau\}} \zeta,\, t\ge 0$. Consequently, for any $\sigma\in \mathcal T$, the pair 
$(Y^{[\tau]},M^{[\tau]})$  is a solution of $\operatorname{BSDE}^\sigma(Y^{[\tau]}_\sigma,\tilde f+dV,\tilde X,\mathbb G,\BP)$, where
\[
\tilde x=(x,x_{d+1}),\qquad \tilde f(\tilde x,y)=x_{d+1}\cdot f(x,y),\quad x_{d+1}\in\BR^+,\,x\in D,
\]
and
\[
X^{d+1}_t=\fch_{\{t<\tau\}},\quad \tilde X=(X,X^{d+1}).
\]
\end{remark}

\subsection{Stability results}

For $n\geq1$ let  $\BG^n=(\GG^n_t)_{t\ge0}$ be a filtration  in $\GG$ satisfying the usual hypotheses and $\TT^n$ be the set of finite-valued $\BG^n$-stopping times. 
We also assume as given a sequence of $\BG^n$-adapted c\`adl\`ag  processes $X^n$ such that $P(X^n_t\in D,\, t\in [0,\tau_n))=1$, where $\tau_n\in\TT^n$.

In what follows we use ``$\arrowpl$'' (resp. ``$\arrowdl$'') to denote the convergence in probability (resp. in distribution).
\begin{definition}
We say that $\BG^n$ converges weakly to $\BG$ on $[0,T]$, and we write $\BG^n\arroww\BG$, if  for every $B\in\GG_T$,
\[
\BE(\fch_B|\GG^n_{\cdot})
\arrowpl \BE(\fch_B|\GG_{\cdot})\quad\mbox{in }D([0,T];\BR).
\]
\end{definition}

\begin{theorem}
\label{th2.4}
Assume that \mbox{\rm(H2), (H3)} are satisfied and
\begin{enumerate}[\rm(a)]
\item $\tau_n\in\mathcal T^n$,  $\BE|g(X^n_{\tau_n})|<\infty$, $n\ge1$;
\item $\tau\in\mathcal T$,  $\BE|g(X_{\tau})|<\infty$.
\end{enumerate}
Let $(Y^n,M^n)$ be a solution of $\operatorname{BSDE}^{\tau_n}(g(X^n_{\tau_n}),f,X^n,\BG^n,\BP)$, and $(Y,M)$
be a solution of $\operatorname{BSDE}^{\tau}(g(X_{\tau}),f,X,\BG,\BP)$. Assume also that 
\begin{equation}
\label{eq2.2}
X^n\arrowpl X\mbox{ in }[D([0,T];\BR)]^d,\qquad \BG^n\arrowwl\BG\mbox{ on }[0,T]
\end{equation}
for every $T>0$, and  
\begin{equation}
\label{eq2.9}
\BP(\tau>0)=1,\quad \BE\tau<\infty,\quad \BE|\tau_n-\tau|\to 0,
\quad X^n_{\tau_n}\arrowpl X_{\tau},
\end{equation}		
\begin{equation}
\label{eq2.15}
\left(g(X^n_{\tau_n})\right)_{n\ge 1}\quad\text{ is  }\BP\text{-uniformly integrable}.
\end{equation}
Then $(Y^{n,(\tau_n)},M^{n,(\tau_n)})\arrowp(Y^{(\tau)},M^{(\tau)})$  in $D([0,T];\BR^2)$ for every $T>0$ such that $\BP(\tau=T)=0$ and $\BP(\Delta X_T=0)=1$.
\end{theorem}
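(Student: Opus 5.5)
We reduce the varying-terminal-time problem to a fixed-horizon stability problem on $[0,T]$, where the stability result \cite[Theorem 2.6]{RS2} (in the strengthened form of the preceding remark, hypothesis (A)) applies. The device is Remark~\ref{rem2.6}. Fix $T$ with $\BP(\tau=T)=0$ and $\BP(\Delta X_T=0)=1$. The stopped pair $(Y^{(\tau)},M^{(\tau)})$ solves on $[0,T]$ a BSDE with generator $\fch_{\{s<\tau\}}f(X_s,y)$, terminal value $Y_{\tau\wedge T}$ at time $T$, and no finite-variation term. The analogous statement holds for $(Y^{n,(\tau_n)},M^{n,(\tau_n)})$ with generator $\fch_{\{s<\tau_n\}}f(X^n_s,y)$. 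We therefore view these as BSDEs on $[0,T]$ driven by the augmented processes $\tilde X^n=(X^n,\fch_{\{\cdot<\tau_n\}})$ and $\tilde X=(X,\fch_{\{\cdot<\tau\}})$, with generator $\tilde f(\tilde x,y)=x_{d+1}f(x,y)$. The terminal values $Y^n_{\tau_n\wedge T}$ and $Y_{\tau\wedge T}$ are not given data, so we must control them separately.

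\textbf{Step 1 (convergence of the augmented forward processes).} Since $\BE|\tau_n-\tau|\to0$, we have $\tau_n\to\tau$ in probability, so $\fch_{\{\cdot<\tau_n\}}\to\fch_{\{\cdot<\tau\}}$ in $D([0,T])$ in probability, because $\tau\ne T$ a.s. Joint convergence of $(X^n,\fch_{\{\cdot<\tau_n\}})$ in the product Skorokhod space requires that the jump times not merge badly. Here one would use $X^n_{\tau_n}\to X_\tau$ in probability: it forces the time change aligning $X^n$ with $X$ to also carry $\tau_n$ to $\tau$. Stability of weak convergence of filtrations under adjoining stopping times must also be checked, i.e. $\BG^n\to\BG$ is preserved when $\tau_n\to\tau$. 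This is the step I expect to be the main obstacle. I would argue via the characterization of $J_1$-convergence through a common time change, together with the condition on the jump of $X$ at $\tau$, which is encoded in the convergence of exit positions.

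\textbf{Step 2 (reduction to a fixed terminal time via $T\to\infty$ tails).} Rather than working on $[0,T]$ directly, I would treat the horizon $T'\ge T$ large. On $\{\tau\le T'\}$ the terminal value equals $g(X_\tau)$, and $g(X^n_{\tau_n})\to g(X_\tau)$ in probability by continuity of $g$ (H2). With \eqref{eq2.15} this gives $L^1$ convergence. For the complement, $\BP(\tau>T')$ and $\sup_n\BP(\tau_n>T')$ are small by $\BE\tau<\infty$ and $\BE|\tau_n-\tau|\to0$. The Garsia--Neveu type estimate of Proposition~\ref{prop2.4}(ii), applied with $q<1$, compares $Y^n$ with the solution $\hat Y^n$ of the BSDE on $[0,\tau_n\wedge T']$ with terminal value $g(X^n_{\tau_n})\fch_{\{\tau_n\le T'\}}$. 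The error is bounded by $\BE[|g(X^n_{\tau_n})|\fch_{\{\tau_n>T'\}}]$ plus $\BE\int_{T'}^{\tau_n\vee T'}|f(X^n_s,\cdot)|ds$, and both are small uniformly in $n$ by uniform integrability and the boundedness of $f$ on $\bar D\times$ compacts. To bound $Y^n$ uniformly, first truncate via the boundedness argument of the remark, using class D and truncation of $g$.

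\textbf{Step 3 (apply the fixed-horizon theorem and conclude).} For the truncated problems on $[0,T']$, with generator $\tilde f$, terminal data $g(X^n_{\tau_n})\fch_{\{\tau_n\le T'\}}$ and forward processes $\tilde X^n$, Step 1 and the remark after Proposition~\ref{prop2.4} give convergence in probability in $D([0,T'])$. Condition (A) holds since $\tilde f$ is continuous and bounded on compacts of $\bar D\times\BR$. Restricting to $[0,T]$, which is a continuity point because $\BP(\tau=T)=0$ and $\Delta X_T=0$ a.s., and letting $T'\to\infty$ via Step 2 yields $(Y^{n,(\tau_n)},M^{n,(\tau_n)})\to(Y^{(\tau)},M^{(\tau)})$ in probability. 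The martingale parts converge because the $M$-component is determined by the $Y$-component through the equation, and the integral terms converge in probability.
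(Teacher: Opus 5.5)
Your proposal is essentially the paper's proof. You augment the forward process by $\fch_{\{\cdot<\tau_n\}}$ with $\tilde f(\tilde x,y)=x_{d+1}f(x,y)$, solve on a finite horizon via \cite[Theorem 2.6]{RS2}, control the part beyond the horizon with Proposition~\ref{prop2.4} and Remark~\ref{rem2.6}, and truncate $g$. One small difference is the terminal value at the horizon: the paper uses $g(X^n_{T\wedge\tau_n})$ and so must first assume $g$ bounded, whereas your $g(X^n_{\tau_n})\fch_{\{\tau_n\le T'\}}$ lets \eqref{eq2.15} control the tail term directly.

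The step you flag as the main obstacle is not one. As in \eqref{eq2.2}, only convergence in the product space $[D([0,T];\BR)]^{d+1}$ is needed. This holds componentwise once $\fch_{\{\cdot<\tau_n\}}\to\fch_{\{\cdot<\tau\}}$ in probability, with no alignment of jump times. Also, since $\tau_n\in\TT^n$, the augmented process is already $\BG^n$-adapted, so no filtration is enlarged and $\BG^n\arroww\BG$ is simply the hypothesis.
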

\begin{proof} 
{\em Step 1.} We assume additionally that $g$ is bounded.
By  \cite[Theorem 2.7]{KR:JFA},
for any $n\geq1$  and $T>0$ there exists a unique solution $(Y^{n,T},M^{n,T})$  of  the problem 
BSDE$^{T\wedge\tau_n}(g(X^n_{T\wedge\tau_n}),f,X^n,\BG^n,\BP)$,
and a  unique solution $(Y^{T},M^{T})$  of  the problem
BSDE$^{T\wedge\tau}(g(X_{T\wedge\tau}),f,X,\BG,\BP)$. 
Set
\[
\tilde x=(x,x_{d+1}),\qquad \tilde f(\tilde x,y)=x_{d+1}\cdot f(x,y),\quad x_{d+1}\in\BR^+,\,x\in D,
\]
and
\[
X^{d+1}_t=\fch_{\{t<\tau\}},\quad \tilde X=(X,X^{d+1}),\quad X^{n,d+1}_t=\fch_{\{t<\tau_n\}},\quad \tilde X^n=(X^n,X^{n,d+1}).
\]
Then $\tilde f$ satisfies hypotheses \mbox{\rm(H2), (H3)}. Set
$\sigma_n= T\wedge\tau_n$ and $\sigma=T\wedge\tau$. Then
\[
 Y^{n,T,(\sigma_n)}_t=g(X^n_{T\wedge\tau_n})
+\int^{T}_{t}\tilde f(\tilde X^n_s, Y^{n,T,(\sigma_n)}_s)\,ds
-\int^{T}_{t}d M^{n,T,(\sigma_n)}_s,\quad t\in[0,T],
\]
and
\[
 Y^{T,(\sigma)}_t=g(X_{T\wedge\tau})
+\int^T_t\tilde f(\tilde X_s, Y^{T,(\sigma)}_s)\,ds-\int^T_td M^{T,(\sigma)}_s,  \quad t\in[0,T].
\]
Since $\tau_n\arrowp \tau$, $X^{n,d+1}\arrowp X^{d+1}$ in $D([0,T];\BR)$ for every $T>0$ such that $\BP(\tau=T)=0$. 
Since we also assume that $\BP(\Delta X_T=0)=1$, 
it follows from  the first part of assumption  (\ref{eq2.2}) and  \eqref{eq2.9} that
\[
\tilde X^n\arrowpl \tilde X\quad\mbox{in }[D([0,T];\BR)]^{d+1},\qquad X^n_{T\wedge\tau_n}\arrowpl X_{T\wedge\tau}\quad\mbox{in }\BR^d.
\]
By \cite[Theorem 2.6]{RS2}, for $T>0$ having the properties stated above,
\begin{equation}
\label{eq2.5}
( Y^{n,T,(\sigma_n)}, M^{n,T,(\sigma_n)})\arrowpl( Y^{T,(\sigma)}, M^{T,(\sigma)})\quad\mbox{in }D([0,T];\BR^2).
\end{equation}
By Proposition \ref{prop2.4} and Remark \ref{rem2.6}, for every $q\in (0,1)$,
\begin{align}
\label{eq2.6}
E\sup_{t\ge 0}|Y^{T,[\sigma]}_t- Y^{[\tau]}_t|^q&\le c_q \Big(\BE\int_{T\wedge \tau}^\tau|f(X_s,0)|\,ds+\BE(\fch_{\{\tau>T\}}(|g(X_{T\wedge \tau})|+|g(X_\tau)|))\Big)^q \nonumber\\
&\le c_q(\|f(\cdot,0)\|_\infty \BE(\tau-T)^++\|g\|_\infty P(T<\tau))^q
\end{align}
and
\[
\BE\int^{\tau}_{0}|f(X_s, Y^{[\tau]}_s)-f(X_s,Y^{T,[\sigma]}_s)|\,ds\le c (\|f(\cdot,0)\|_\infty \BE(\tau-T)^++\|g\|_\infty \BP(T<\tau)).
\]
From this, by using the formulas for $Y^{T,[\sigma]}$ and $Y^{[\tau]}$ (see Remark \ref{rem2.6}), we infer that for $q\in (0,1)$,
\begin{equation}
\label{eq2.8}
\lim_{T\rightarrow\infty}\BE\sup_{t\ge 0}|M^{T,(\sigma)}_t- M^{(\tau)}_t|^q=0.
\end{equation}
Analogously,
\begin{align}
\label{eq2.10}
&\BE\sup_{t\ge0}|Y^{n,T,[\sigma_n]}_t- Y^{n,[\tau_n]}_t|^{q}\le c_q(\|f(\cdot,0)\|_\infty \BE(\tau_n-T)^++\|g\|_\infty \BP(T<\tau_n))^q
\end{align}
and 
\begin{align*}
&\BE\int^{\tau_n}_{0}
|f(X^n_s,Y^{n,[\tau_n]}_s)-f(X^n_s,Y^{n,T,[\tau_n\wedge T]}_s)|\,ds\\
&\qquad\le c(\|f(\cdot,0)\|_\infty \BE(\tau_n-T)^++\|g\|_\infty \BP(T<\tau_n)).
\end{align*}
Furthermore, by  (\ref{eq2.9}),  
\begin{equation} 
\label{eq2.12}
\lim_{T\rightarrow\infty}\limsup_{n\rightarrow\infty}(\BP(T<\tau_n)+\BE(\tau_n-T)^+)=0.
\end{equation}
As a result we get
\[
\lim_{T\rightarrow\infty}\limsup_{n\rightarrow\infty}
\BE\Big(\sup_{t\ge 0}|Y^{n,T,[\sigma_n]}_t- Y^{n,[\tau_n]}_t|^q+
\int^{\tau_n}_{0}|f(X^n_s,Y^{n,[\tau_n]}_s)-f(X^n_s,Y^{n,T,[\sigma_n]}_s)|\,ds\Big)=0.
\]
From this, by using the formulas for $Y^{n,T,[\sigma_n]}$ and $Y^{n,[\tau_n]}$ (see Remark \ref{rem2.6}), we deduce that
\[
\lim_{T\rightarrow\infty}\limsup_{n\rightarrow\infty}
\BE\sup_{t\ge 0}|M^{n,T,(\sigma_n)}_t- M^{n,(\tau_n)}_t|^{q}=0.
\]
Observe that \eqref{eq2.6} and \eqref{eq2.10} hold with $[\sigma],[\tau]$ and $[\sigma_n],[\tau_n]$ replaced
by $(\sigma),(\tau)$ and $(\sigma_n),(\tau_n)$, respectively, with  possibly different constants. 
Consequently, these two estimates combined with   (\ref{eq2.5}), \eqref{eq2.8} and the above equality give the desired convergence.

{\em Step 2.} In this step, we dispense with the assumption that $g$ is  bounded. Let $\varepsilon>0$. Set
\[
g_k(x):= (g(x)\wedge k)\vee (-k),\quad x\in \BR^d.
\]
Let  $(Y^{n,k},M^{n,k})$ be the solution of BSDE$^{\tau_n}(g_k(X^n_{\tau_n}),f,X^n,\BG^n,\BP)$ and $(Y^k,M^k)$
be the solution of BSDE$^{\tau}(g_k(X_{\tau}),f,X,\mathbb G,\BP)$. Then, by Proposition \ref{prop2.4}, for any $q\in (0,1)$,
\[
\BE\sup_{t\ge 0}| Y^{n,[\tau_n]}_t- Y^{n,k,[\tau_n]}_t|^q
\le c_q\big(\BE(\fch_{\{|g(X^n_{\tau_n})|>k\}}|g(X^n_{\tau_n})|)\big)^q
\]
and 
\[
\BE\sup_{t\ge 0}| Y^{[\tau]}_t- Y^{k,[\tau]}_t|^q \le c_q\big(\BE(\fch_{\{|g(X_{\tau})|>k\}}|g(X_{\tau})|)\big)^q.
\]
Hence,  by \eqref{eq2.9} and \eqref{eq2.15}, there exists $k_0$ such that for every $k\ge k_0$ we have
\[
\BE\sup_{t\ge 0}| Y^{n,[\tau_n]}_t- Y^{n,k,[\tau_n]}_t|^q
+\BE\sup_{t\ge 0}| Y^{[\tau]}_t- Y^{k,[\tau]}_t|^q \le \varepsilon,\quad n\ge 1.
\]
By Proposition \ref{prop2.4},
\[
\BE\int^{\tau_n}_{0}|f(X^n_s, Y^{n,k,[\tau_n]}_s)-f(X^n_s, Y^{n,[\tau_n]}_s)|\,ds\le \BE(\fch_{\{|g(X^n_{\tau_n})|>k\}}|g(X^n_{\tau_n})|)
\]
and 
\[
\BE\int^{\tau}_{0}|f(X_s, Y^{k,[\tau]}_s)-f(X_s, Y^{[\tau]}_s)|\,ds\le \BE(\fch_{\{|g(X_{\tau})|>k\}}|g(X_{\tau})|).
\]
By this and  \eqref{eq2.9},  \eqref{eq2.15}, there exists $k_0$ such that for any $k\ge k_0$ and $n\ge 1$,
\[
\BE\int^{\tau}_{0}|f(X_s, Y^{k,[\tau]}_s)-f(X_s, Y^{[\tau]}_s)|\,ds+\BE\int^{\tau_n}_{0}|f(X^n_s, Y^{n,k,[\tau_n]}_s)-f(X^n_s, Y^{n,[\tau_n]}_s)|\,ds\le\varepsilon.
\]
From this and the previous step of the proof one easily concludes the result.
\end{proof}

\begin{remark}
A brief  inspection of the proof of Theorem \ref{th2.4} shows that the fourth  assumption in (\ref{eq2.9})  is superfluous when $g=0$.
\end{remark}

\begin{remark}
\label{rem2.8}
The sets $\{T\ge0: \mathbb P(\tau=T)>0\}$ and
$\{T\ge0:\BP(\Delta X_T\neq0)>0\}$ are at most countable.
The first statement is clear. For the second see, e.g., \cite[Lemma VI.3.12]{JSh}. If  $X$ is a  process with stationary  independent increments, 
then $\BP(\Delta X_T\neq0)=0$ for every $T>0$ (see \cite[II.4.3]{JSh}).
\end{remark}

\section{L\'evy processes}

To fix notation, we start with recalling some known results on L\'evy processes   to be used later on.

Let $b\in\BR^d$, $Q$ be a nonnegative definite symmetric \(d\times d\) matrix and $\nu$ be a L\'evy measure, i.e.  
a Borel measure on $\BR^d$ such that $\nu(\{0\})=0$ and $\int_{\BR^d}(1\wedge|y|^2)\,\nu(dy)<\infty$.
In what follows $X^0=(X^{0,1},\dots,X^{0,d})$ is a L\'evy process in $\BR^d$ defined on some complete probability space $(\Omega_0,\GG,\BP)$
with the generating  triplet $(Q,\nu,b)$, i.e. with the characteristic function 
\[
\BE e^{i(\xi,X^0_t)}= e^{-t\psi(\xi)},\quad t>0,\quad \xi\in\BR^d,
\]
where  the  characteristic exponent $\psi:\BR^d\rightarrow\BC$ is  given by \eqref{eq1.4}
(see \cite[Theorem 8.1, Corollary 11.6]{Sa}). By $\BG:=(\mathcal G_t)_{t\ge0}$ we denote the completion of the  filtration generated by $(X^0_t)$,
i.e. $\mathcal G_t=\sigma(X^0_s,\, s\le t)\vee\mathcal N$, where $\mathcal N$ is the set of all $\BP$-null sets in $\mathcal G$.
Such a filtration is right-continuous (see \cite[Theorem I.32]{P}).
By \cite[Theorem 31.5]{Sa}, 
\[
P_t\eta(x):=\BE\eta(x+X^0_t),\quad \eta\in C_0(\BR^d),\,\, t\ge 0,
\]
is a $C_0$-semigroup on $C_0(\BR^d)$ and its generator 
$L$ restricted   to $C^2_c(\BR^d)$ can be written as the integro-differential operator  (\ref{eq1.2}). 

It is known that if the characteristic exponent  \eqref{eq1.4} satisfies   \eqref{eq1.3}, then 
\begin{equation}
\label{eq3.1}
\Psi(u,v):=\int_{\BR^d}\hat u(\xi)\overline{\hat v(\xi)}\psi(\xi)\,d\xi, \quad u,v\in H^{\psi},
\end{equation}
where
\begin{equation}
\label{eq3.2}
H^{\psi}=\Big\{u\in L^2(\BR^d):\int_{\BR^d}(1+|\psi(\xi)|)|\hat u(\xi)|^2\,d\xi<\infty\Big\}
\end{equation}
($\hat u$ denotes the Fourier transform of $u$)
is a (nonsymmetric) regular Dirichlet form  on $L^2(\BR^d)$ (see \cite{BergForst1973}). 
For $x\in \BR^d$ we set 
\[
X^x=x+X^0,\qquad\tau^x=\inf\{t>0:X^x_t\notin D\},\qquad \bar\tau^x=\inf\{t>0:X^x_t\notin\bar D\}.
\]
We define the transition kernels $P_t(x,\,dy)$ and the resolvent kernels $R_{\alpha}(x,dy)$ of $X^0$ by setting
\[
P_t(x,B)=\BP(X^x_t\in B),\quad t>0,\qquad 
R_{\alpha}(x,B)=\BE\int^{\infty}_0e^{-\alpha t}\fch_B(X^x_t)\,dt,\quad \alpha\ge0,
\]
for any $x\in\BR^d$ and Borel set $B\subset\BR^d$. We say that $X^0$ satisfies 
condition (ACP) ({\em absolute continuity of potential measures})   
if 
\[
R_{\alpha}(x,dy)\ll m\quad\mbox{for all $\alpha>0$ and $x\in \BR^d$},
\]
where $m$ denotes the  Lebesgue measure on $\BR^d$, and we say that $X^0$ 
satisfies condition (ACT) ({\em absolute continuity of transition measures}) if
\[
P_t(x,dy)\ll m\quad\mbox{for all $t>0$ and $x\in \BR^d$},
\]
From the definition of $R_{\alpha}$ it follows that (ACT) implies (ACP). However, (ACP) 
does not imply (ACT). For a counterexample see, e.g., \cite[Remark 41.13]{Sa} or \cite[Example 4.2.1]{FOT}.  
By \cite[Theorem 4.2.4]{FOT}, conditions (ACP) and (ACT) are equivalent for symmetric processes $X$. The  Hartman--Wintner  condition \eqref{eq1.9}
is sufficient for (ACT) and hence (ACP).

For any $x\in D$ and a Borel subset $B$ of $\BR^d$ we set
\[
P_D(x,B)=\BP(X^x_{\tau^x}\in B),\quad x\in D.
\]
The kernel $P_D(x,dy)$ is called  the {\em Poisson kernel} for $L$ and $D$. In  general, the measure $P_D(x,dy)$ is concentrated on $D^c$
 but for continuous processes $X^0$ and finite stopping times $\tau^x$ it is concentrated on $\partial D$.  We also consider the Green kernel 
 given by 
  \[
   G_D(x,B):=\mathbb E\int_0^{\tau^x_D}\mathbf1_B(X^x_t)\,dt.
   \]
 Suppose that $X^0$ is  non-trivial 
 (i.e. at least one component of its triplet is non-zero)
and that condition (ACP) is satisfied. Then 
there exists a nonnegative  jointly measurable function $G_D:D\times D\rightarrow\BR$, called the {\em Green function} for $L$ and  $D$, such that 
\begin{equation}
\label{eq3.3}
\mathbb E\int^{\tau^x}_0h(X^x_t)\,dt=\int_Dh(y)G_D(x,y)\,dy,\quad x\in D,\quad h\in\BB_b(\BR^d).
\end{equation}
To see this, we consider the resolvent $\tilde R_{\alpha}$ associated with the dual process of $X^0$ (see \cite[Definition 41.6]{Sa}). 
It satisfies (ACP). Therefore  the part $R^D_{\alpha}$ of $R_{\alpha}$ on $D$ and the part  $\tilde R^D_{\alpha}$ of $\tilde R_{\alpha}$ on $D$
(see \cite[Definition 42.1]{Sa}) satisfy (ACP). By Hunt's switching formula (see \cite[Proposition 42.3]{Sa}) and \cite[Theorem VI.(1.4)]{BG}, 
the resolvents $R^D_{\alpha}$ and $\tilde R^D_{\alpha}$ are dual with respect to $m$ in the sense defined  in \cite[Definition VI.(1.2)]{BG}. 
The existence of $G_D$ now follows from the remarks in \cite[p. 256]{BG}. If $X^0$ is symmetric, then $G_D$ has the property that $G_D(x,y)=G_D(y,x)$. 
Note also that for symmetric processes the existence of $G_D$ follows from \cite[Lemma 4.2.4]{FOT} applied to the part $\Psi^D$ of the form 
$\Psi$ on $D$ and an approximation argument (we set $G_D=\lim_{\alpha\downarrow0}r_{\alpha}$, where $r_\alpha$ is the resolvent density 
of \cite[Lemma 4.2.4]{FOT} associated with $\Psi^D$).

From (\ref{eq3.3}) it follows in particular that 
\[
\mathbb E\tau^x=\int_DG_D(x,y)\,dy, \quad x\in D.
\]

\subsection{Canonical realization of a L\'evy process}

Throughout the paper, we shall frequently use the canonical realization of a given 
L\'evy process  $X^0=(X^0_t)_{t\geq0}$ (see \cite[Section 40]{Sa}). More precisely, we set 
\[
\Omega=D([0,\infty);\BR^d),
\]
i.e. $\Omega$ is the Skorokhod  space of all c\`adl\`ag functions from $[0,\infty)$ into $\BR^d$ equipped with the $J_1$-topology.  
We denote by $\mathcal A$ the  Borel $\sigma$-field of subsets of $\Omega$. Let
\[
X_t(\omega)=\omega(t),
\qquad \omega\in\Omega,\quad t\geq 0,
\]
i.e. $X$ is the canonical process on $\Omega$.  
Observe that for every $x\in\BR^d$,  $X^x$ is  an $\Omega$-valued random
variable. We define a probability measure $\mathbb P_x$ on
$(\Omega,\mathcal A)$ as the law of $X^x$, that is
\[
\BP_x(B)=
\mathbb P(X^x\in B),\quad B\in\mathcal A.
\]
Equivalently, 
\[
\BP_x=\BP_0\circ s_x^{-1},
\]
where $\BP_0$ is the law of $X^0$ on $\Omega$ and
\[
s_x:\Omega\longrightarrow\Omega,
\qquad
s_x(\omega)(t)=x+\omega(t).
\]
Under the measure $\BP_x$ the canonical process $X$ starts from $x$ and
$(X_t-x)_{t\geq0}$
is a L\'evy process with the same law as $X^0$. We write $\BE_x$ for the expectation with respect to $\mathbb P_x$.
Unless otherwise stated, $\BF=(\FF_t)_{t\geq0}$ denotes the
right-continuous augmentation of the canonical filtration
\[
\FF_t^0=\sigma(X_s:0\leq s\leq t),
\qquad t\geq0.
\]
It is well known that  $X$ is a  strong Markov process with respect to $\BF$ (see  \cite[Chapter~I, Proposition~6]{Bertoin1996}).

\subsection{Capacity-zero sets and $m$-inessential sets}

Let $m$ denote the Lebesgue measure
on $\mathbb R^d$. The translation invariance of $(X_t)$ implies that $m$ is invariant
for the  semigroup $(P_t)_{t\geq 0}$ of $X$.
In particular, $m$ is an excessive measure for $X$.

Fix \(\alpha>0\) and choose \(\rho\in L^1(\mathbb R^d)\) such that
\[
\rho>0\quad m\text{-a.e.},
\qquad
\int_{\BR^d} \rho(x)\,m(dx)=1
\]
Set $\mu=\rho m$. Then  $\mu$ is a probability measure equivalent to
$m$. For a Borel set $A\subset \BR^d$ define
\[
e_A^\alpha(x)=
\BE_x\!\left[e^{-\alpha T_A};\,T_A<\infty\right],
\quad x\in \BR^d,
\]
where $T_A$ is the first hitting time of $A$, i.e.
\[
T_A=\inf\{t>0:X_t\in A\}.
\]
We shall use the following $\alpha$-hitting capacity:
\[
\operatorname{Cap}_{\mu}^{\alpha}(A)
:=
\int_{\BR^d} e_A^\alpha(x)\,\mu(dx).
\]
Let $A\subset \BR^d$ be a Borel set. We say that $A$ is $m$-polar if
\[
\BP_x(T_A<\infty)=0
\quad\text{for $m$-almost every }x\in \BR^d.
\]
Since $\mu$ and $m$ are equivalent, it follows directly from the
definition of $\operatorname{Cap}_{\mu}^{\alpha}$ that
\[
\operatorname{Cap}_{\mu}^{\alpha}(A)=0
\quad\Longleftrightarrow\quad
e_A^\alpha=0
\quad m\text{-a.e.}
\quad\Longleftrightarrow\quad
\mathbb P_x(T_A<\infty)=0
\quad m\text{-a.e.}
\]
Consequently,
\[
\operatorname{Cap}_{\mu}^{\alpha}(A)=0
\quad\Longleftrightarrow\quad
A\text{ is $m$-polar}.
\]
A nearly Borel set $N\subset \BR^d$ is called $m$-inessential if $N$ is
$m$-polar and $\BR^d\setminus N$ is $X$-absorbing, i.e.
\[
\BP_x\bigl(X_t\in \BR^d\setminus N\text{ for every }t\geq 0\bigr)=1,
\quad x\in \BR^d\setminus N.
\]

\begin{proposition}
\label{prop3.1}
Let $B\subset \BR^d$ be a Borel set. If
$\operatorname{Cap}_{\mu}^{\alpha}(B)=0$,
then there exists a Borel $m$-inessential set $N\subset \BR^d$ such that $B\subset N$.
\end{proposition}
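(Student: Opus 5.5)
The plan is the classical construction of an inessential envelope of a polar set, as in Fukushima--Oshima--Takeda or Blumenthal--Getoor, using translation invariance of $m$. First, from $\operatorname{Cap}_{\mu}^{\alpha}(B)=0$ we get that $B$ is $m$-polar, i.e.\ $\BP_x(T_B<\infty)=0$ for $m$-a.e.\ $x$. Since $B$ is Borel, $T_B$ is an $\BF$-stopping time. The function $\varphi(x):=\BP_x(T_B<\infty)$ is $\alpha$-excessive for $\alpha=0$, hence nearly Borel; Borel measurability follows from the Borel measurability of $x\mapsto\BP_x=\BP_0\circ s_x^{-1}$. Set
\[
N_1:=\{x:\BP_x(T_B<\infty)>0\}.
\]
Then $N_1$ is Borel and $m(N_1)=0$. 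The first point to check is $B\subset N_1\cup B^r$, where $B^r$ denotes the regular points. Since $m(B)=0$ (a set of positive measure is hit with positive probability from a.e.\ point, because $R_\alpha(x,B)>0$ on a set of positive measure), points of $B$ not lying in $N_1$ must be handled directly. This is why I would rather build $N$ from a larger set. Take
\[
N_0:=B\cup N_1,
\]
which is Borel with $m(N_0)=0$.

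Second, I would iterate. For a Borel $m$-null set $A$, define
\[
\Phi(A):=\{x:\BP_x(T_A<\infty)>0\}\cup A.
\]
The key claim is that if $A$ is $m$-polar then $\Phi(A)$ is again $m$-polar. To see this, fix $x$ with $\BP_x(T_A<\infty)=0$. By the Markov property at times $s>0$,
\[
\BP_x\bigl(X_s\in\{y:\BP_y(T_A<\infty)>0\}\bigr)=0\quad\text{for a.e. } s,
\]
since otherwise $A$ would be hit after $s$. Using the strong Markov property at hitting times of the set $\{\varphi_A>0\}$, together with right continuity of $s\mapsto\varphi_A(X_s)$ (excessive functions are finely continuous, so composition with the path is right continuous a.s.), one shows that $\{\varphi_A>0\}$ is not hit from such $x$. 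Hence $\Phi(A)$ is $m$-polar. Now set $N_{k+1}:=\Phi(N_k)$ and $N:=\bigcup_k N_k$. Then $N$ is Borel and $m$-polar as a countable union of $m$-polar sets. For $x\notin N$ we have $\BP_x(T_{N_k}<\infty)=0$ for every $k$, because otherwise $x\in N_{k+1}$. Therefore $\BP_x(T_N<\infty)=0$, i.e.\ $\BR^d\setminus N$ is absorbing; the event $\{X_0\in N\}$ is excluded since $x\notin N$. Finally, $B\subset N_0\subset N$.

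The main obstacle is the step showing that $\Phi(A)$ is $m$-polar, i.e.\ that the set of starting points from which $A$ is hit with positive probability is itself not hit from points outside it. Here I would use that $\varphi_A$ is excessive: for $x$ with $\varphi_A(x)=0$, the process $\varphi_A(X_t)$ is a nonnegative right-continuous supermartingale under $\BP_x$ with initial value $0$, hence identically $0$ a.s. Therefore $X$ never enters $\{\varphi_A>0\}$, which gives absorption directly; $m$-polarity of $\Phi(A)$ follows because $\varphi_A=0$ $m$-a.e. This supermartingale argument in fact makes the iteration almost unnecessary: $N:=B\cup\{\varphi_B>0\}$ already works once one checks that from $x\notin N$ the set $B$ itself is not hit. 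That holds, since $\BP_x(T_B<\infty)=\varphi_B(x)=0$.
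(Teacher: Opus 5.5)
\textbf{Assessment: there is a genuine gap, and it is the Borel requirement.}

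Your $m$-polarity and absorption arguments are sound. The function $\varphi_B(x)=\BP_x(T_B<\infty)$ is excessive, and the right-continuous supermartingale $\varphi_B(X_t)$ vanishes identically under $\BP_x$ whenever $\varphi_B(x)=0$. Therefore $N:=B\cup\{\varphi_B>0\}$ contains $B$, is $m$-polar, and has absorbing complement. As you noticed, your map $\Phi$ is idempotent, so the iteration as written adds nothing.

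The gap is the word \emph{Borel}, which is exactly the nontrivial content of the results the paper cites (Getoor--Sharpe, Proposition~6.12; Chen--Fukushima, Theorem~A.2.15). Your claim that $\varphi_B$ is Borel because $x\mapsto\BP_x$ is a Borel kernel does not work. The event $\{T_B<\infty\}$ is the projection onto $\Omega$ of the Borel set $\{(t,\omega):t>0,\ \omega(t)\in B\}$, hence only analytic. So you get universal (and, via excessivity, near-Borel) measurability of $\varphi_B$, nothing more.

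In fact $N$ need not be Borel. Take the uniform motion $X_t=x+te_1$ in $\BR^2$ and let $C$ be the Cantor set. Let $B\subset(0,1)\times C$ be a Borel set whose projection $S$ on the second coordinate is analytic but not Borel; such $B$ exists because $C$ is an uncountable Polish space. Then $\BP_x(T_B<\infty)>0$ forces $x_2\in S\subset C$, so $B$ is $m$-polar and $\operatorname{Cap}^\alpha_\mu(B)=0$. But the section of $N$ at $x_1=0$ is exactly $S$, since the section of $B$ there is empty. Sections of Borel sets are Borel, so $N$ is not Borel.

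What is missing is a Borel enlargement step, and this is where an iteration is genuinely needed. A L\'evy process is a Hunt process, and quasi-left continuity gives outer regularity of the hitting capacity on nearly Borel sets. For nearly Borel $A$ there are open sets $G_j\downarrow$, $G_j\supset A$, with $T_{G_j}\uparrow T_A$ $\BP_\mu$-a.s. Hence the $G_\delta$ set $\bigcap_jG_j\supset A$ has the same capacity as $A$.

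Now set $B_0=B$. Given a Borel $m$-polar set $B_k$, let $B_{k+1}$ be a Borel set of zero capacity containing $B_k\cup\{\varphi_{B_k}>0\}$; this set is nearly Borel and, by your supermartingale argument, $m$-polar. Then $N=\bigcup_kB_k$ is Borel and $m$-polar. For $x\notin N$ you have $\varphi_{B_k}(x)=0$ for every $k$, hence $\BP_x(T_N<\infty)\le\sum_k\BP_x(T_{B_k}<\infty)=0$. This is the scheme of Fukushima--Oshima--Takeda, Theorem~4.1.1.

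Under (ACP) the difficulty disappears: an excessive function vanishing $m$-a.e.\ then vanishes everywhere, so $\varphi_B\equiv0$ and $N=B$ works. However, the proposition is stated without (ACP).
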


\begin{proof}
By the preceding discussion, the condition
$\operatorname{Cap}_{\mu}^{\alpha}(B)=0$ implies that $B$ is
$m$-polar. It follows from
\cite[Proposition~6.12]{GetoorSharpe1984} (see also
\cite[Theorem~A.2.15]{ChenFukushima2012}) that every $m$-polar set is
contained in a Borel $m$-inessential set. Hence there exists a Borel
$m$-inessential set $N$ such that $B\subset N$.
\end{proof}

\begin{remark}
\label{rem3.2}
Let $(P^D_t)_{t\ge0}$ denote the semigroup determined by the process $X$ killed upon leaving $D$.
Recall that a Borel function $u:D\to [0,\infty]$ is called $(P^D_t)$-excessive 
if $\sup_{t>0} P^D_tu(x)=u(x),\, x\in D$. By 
\cite[Theorem III.5.7]{BG}, if $u$ is a $(P^D_t)$-excessive function  that is finite $m$-a.e. in $D$,
then the set $\{u=\infty\}$ is $m$-inessential.  The important examples of $(P^D_t)$-excessive functions are: $D\ni x\mapsto \mathbb E_x\gamma(X_{\tau_D})$
for any Borel measurable function $\gamma :\BR^d\to [0,\infty]$ (see, e.g., \cite[Proposition II.2.8]{BG}), 
and $D\ni x\mapsto R^D\eta(x):=\mathbb E_x\int^{\tau_D}_0\eta(X_t)\,dt$ 
for any Borel measurable function $\eta:\BR^d\to [0,\infty]$ (see, e.g., \cite[Proposition II.2.2]{BG}).
\end{remark}

\subsection{L\'evy systems}

It is known that as a L\'evy system $(N,H)$ for $(X,\BP_x)$ (see, e.g., \cite[Definition A.3.7]{FOT}) one may take
\begin{equation}
	\label{eq6.1}
	N(x,dy)=\nu(dy-x),\quad x\in\BR^d,\qquad H_t=t,\quad t\ge0 
\end{equation}
(see \cite[p. 343]{Sh}). Therefore for any nonnegative Borel measurable function $f:\BR^d\times\BR^d\rightarrow\BR$ 
with $f(x,x)=0$ for $x\in\BR^d$ and any stopping time $\tau$ we have
\begin{equation}
\label{eq3.5}
\BE_x\sum_{s \leq \tau} f(X_{s-}, X_s)
=\BE_x\int_0^\tau\!\!\int_{\BR^d} f(X_s, y)\,\nu(d y-X_s)\,ds,\quad x\in\BR^d.
\end{equation}

As an application of (\ref{eq3.5}) we get the following proposition, which looks rather technical, but in fact play significant role 
in the proof of our main Theorem \ref{th3.4} and  the proof of important Proposition \ref{prop6.5}.

\begin{proposition}
\label{prop4.3}
Let $D\subset\BR^d$ be an open bounded set and
$X$ be a L\'evy process on $\BR^d$. Then  
	\begin{equation}
		\label{eq6.9}
		\mathbb P_x\left (\exists\, s\in (0,\tau_D], \Delta X_s\neq0, X_{s-}\in\partial D\right )=0.	
	\end{equation}
\end{proposition}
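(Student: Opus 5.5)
The plan is to apply the L\'evy system formula \eqref{eq3.5} with the test function
\[
F(x,y)=\fch_{\partial D}(x)\fch_{\{x\neq y\}},
\]
which is nonnegative, Borel and vanishes on the diagonal. Jumps of a c\`adl\`ag path are countable and satisfy $X_{s-}\neq X_s$, so
\[
\sum_{s\le\tau_D}F(X_{s-},X_s)=\#\{s\in(0,\tau_D]:\Delta X_s\neq0,\ X_{s-}\in\partial D\}.
\]
It therefore suffices to show that the expectation of this sum is zero. By \eqref{eq3.5},
\[
\BE_x\sum_{s\le\tau_D}F(X_{s-},X_s)=\BE_x\int_0^{\tau_D}\fch_{\partial D}(X_s)\,\nu(\BR^d\setminus\{0\})\,ds .
\]
The total mass $\nu(\BR^d\setminus\{0\})$ may be infinite, so I would first localize. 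I would replace $F$ by
\[
F_\varepsilon(x,y)=\fch_{\partial D}(x)\fch_{\{|y-x|>\varepsilon\}},
\]
for which the inner integral equals $\nu(\{|z|>\varepsilon\})<\infty$. If the statement holds for jumps of size $>\varepsilon$ for every $\varepsilon=1/k$, it holds for all jumps by a countable union.

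Everything then reduces to showing that $\BE_x\int_0^{\tau_D}\fch_{\partial D}(X_s)\,ds=0$. The key observation is that for $s<\tau_D$ we have $X_s\in D$, and $D$ is open, so $X_s\notin\partial D$. The integrand therefore vanishes on $[0,\tau_D)$, and the single point $s=\tau_D$ has Lebesgue measure zero. Note that no absolute continuity assumption is needed here. I must be careful, however, that it is $X_{s-}$, not $X_s$, that sits in $\partial D$. The L\'evy system formula converts $X_{s-}$ into $X_s$ inside a $ds$-integral, and this is legitimate because $X_{s-}=X_s$ for all but countably many $s$. This is exactly the content of \eqref{eq3.5}, so the argument goes through.

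I also have to handle $\tau_D=\infty$ and integrability. Since the integrand is zero, no finiteness of $\tau_D$ is required: I can apply \eqref{eq3.5} with $\tau_D\wedge t$ and let $t\to\infty$ by monotone convergence. For $x\notin D$ we have $\tau_D=0$ (or the interval $(0,\tau_D]$ reduces to trivial cases), so the event is empty or contained in $\{\tau_D=0\}$. The main subtle point, I expect, is checking that \eqref{eq3.5} is stated with $f(X_{s-},X_s)$, so that $X_{s-}\in\partial D$ is genuinely captured. Once that is confirmed, the computation shows that the expected number of such jumps is $0$, which gives \eqref{eq6.9}.
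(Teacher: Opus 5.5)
Your proof is correct and is essentially the paper's argument. The paper also truncates to jumps of size $>1/k$, applies the L\'evy system formula \eqref{eq3.5} to get $\nu(\{|z|>1/k\})\,\BE_x\int_0^{\tau_D}\fch_{\partial D}(X_s)\,ds=0$ (since $X_s\in D$ for $s<\tau_D$), and concludes by a countable union over $k$; your side remark that $\tau_D=0$ for $x\notin D$ fails for $x\in\partial D$, but this is harmless because $X_s\in D$ for $0<s<\tau_D$ holds for any starting point, so your main computation already covers that case (the paper only treats $x\in D$).
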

\begin{proof}
	To simplify notation, we set $\tau=\tau_D$. 
	Let $x \in D$. For $k \geq 1$ define
	\[
	N_k=\sum_{0<s \leq \tau} \mathbf{1}_{\left\{X_{s-} \in \partial D\right\}} \mathbf{1}_{\left\{\left|\Delta X_s\right|>1 / k\right\}}.
	\]
	Then
	\begin{align*}
		\mathbb{E}_x N_k & =\mathbb{E}_x \int_0^\tau \int_{\mathbb{R}^d} \mathbf{1}_{\partial D}\left(X_s\right) \mathbf{1}_{\{|z|>1 / k\}} \nu(d z) d s \\
		& =\nu(\{|z|>1 / k\}) \mathbb{E}_x \int_0^\tau \mathbf{1}_{\partial D}\left(X_s\right) d s=0
	\end{align*}
	which proves (\ref{eq6.9}).
\end{proof}

\section{Probabilistic, Sobolev and viscosity  solutions to PDEs}
\label{sec3}

Since probabilistic methods play a central role in the proofs of our stability results, 
we adopt the notion of probabilistic solution as the primary notion of a solution of (\ref{eq1.1}). 
This choice is also motivated by its generality. Namely, its definition does not impose any additional structural assumptions on either the operator $L$ or the set $D$.
We shall show that, under natural assumptions on the data and for the class of operators considered in this paper, 
Sobolev and viscosity solutions are probabilistic solutions.
In what follows, we consider only non-zero operators $L$.

\subsection{Probabilistic solutions}

\begin{definition}
We say that a Borel measurable  function $u:\mathbb R^d\rightarrow\BR$ is a {\em probabilistic solution} of (\ref{eq1.1}) if 
there exists an $m$-inessential set $N\subset \mathbb R^d$ such that 
$\BE_x|g(X_{\tau_D})|<\infty$ and  $\BE_x\int_{0}^{\tau_D}|f(X_{t},u(X_t))|\,dt<\infty$
for every  $x\in D\setminus N$, and 
\begin{equation}
\label{eq4.1}
u(x)=\BE_xg(X_{\tau_D})
+\BE_x\int_{0}^{\tau_D}f(X_{t},u(X_t))\,dt, \quad x\in D\setminus N,
\end{equation}
\begin{equation}
\label{eq4.2}
u(x)=g(x),\quad x\in D^c\setminus N.
\end{equation}
The set $N$ is called an exceptional set for  $u$.
\end{definition}

For the reader's convenience, in the next theorem we recall  known connections between  
solutions of (\ref{eq4.1}), (\ref{eq4.2}) and solutions of Markov-type BSDEs.

\begin{theorem}
\label{th4.2}
Let $g:D^c\rightarrow\BR$  and $f:D\times\BR\to\BR$ be  measurable functions.
\begin{enumerate}
\item[\rm(i)]  If $u$ is a probabilistic solution to \eqref{eq1.1} with exceptional set $N$, then for every $x\in D\setminus N$ the pair  $(Y,L^x)$, where
\[
Y_t=u(X_{t\wedge\tau_D}),\quad L^x_t= \BE_x\Big(g(X_{\tau_D})+\int_0^{\tau_D}f(X_s,u(X_s))\,ds\,\big|\,\FF_t\Big)-u(x),\quad  t\ge 0,
\]
is a solution of  $\operatorname{BSDE}^{\tau_D}(g(X_{\tau_D}),f,X,\mathbb F,\mathbb P_x)$, and  the pair $(Y^x_t,M^x_t)$, where
\[
Y^x_t=u(X^x_{t\wedge\tau^x_D}),\quad M^x_t= \BE\Big(g(X^x_{\tau^x_D})
+\int_0^{\tau^x_D}f(X^x_s,u(X^x_s))\,ds\,\big|\,\GG_t\Big)-u(x),\quad t\ge 0,
\]
 is a solution of   $\operatorname{BSDE}^{\tau^x_D}(g(X^x_{\tau^x_D}),f,X^x,\BG,\BP)$.

\item[\rm(ii)]
If {\rm(H3)} holds, then the probabilistic solution of
\eqref{eq1.1} is unique up to an $m$-inessential set.

\item[\rm(iii)] If \mbox{\rm (H2), (H3)} hold and $\int_{D^c} |g(z)|\,P_D(x,dz)<\infty$   for a.e. $x\in D$, then there exists a probabilistic solution of \eqref{eq1.1}.
\end{enumerate}
\end{theorem}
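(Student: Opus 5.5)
For (i) I will use the Markov property directly. Fix $x\in D\setminus N$ and set $\xi=g(X_{\tau_D})+\int_0^{\tau_D}f(X_s,u(X_s))\,ds$. By the definition of a probabilistic solution, $\xi$ is $\mathbb P_x$-integrable. Hence $L^x$ is a uniformly integrable martingale with $L^x_0=0$; we take its càdlàg version, which exists because $\mathbb F$ satisfies the usual hypotheses. The key identity to establish is
\[
\BE_x(\xi\,|\,\FF_t)=\int_0^{t\wedge\tau_D}f(X_s,u(X_s))\,ds+u(X_{t\wedge\tau_D})\quad \mathbb P_x\text{-a.s.}
\]
On $\{t\ge\tau_D\}$ this is immediate, since $u(X_{\tau_D})=g(X_{\tau_D})$ on $D^c\setminus N$. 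On $\{t<\tau_D\}$ I use $\tau_D=t+\tau_D\circ\theta_t$ and the simple Markov property to get $\BE_x(\xi|\FF_t)=\int_0^tf\,ds+\BE_{X_t}\xi$, and $\BE_{X_t}\xi=u(X_t)$. For this step to be valid, $X_t$ must stay outside $N$; this is exactly where $m$-inessentiality is used, since $\mathbb P_x(X_t\in N \text{ for some } t)=0$ for $x\notin N$. The identity rearranges into the BSDE equation on $[0,\tau_D]$. Class D follows because $|Y_\sigma|\le \BE_x(|g(X_{\tau_D})|+\int_0^{\tau_D}|f|\,ds\,|\,\FF_\sigma)$ for $\sigma\in\TT$, and a family of conditional expectations of one integrable variable is uniformly integrable. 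The integrability in (b) is part of the definition. The statement for $(Y^x,M^x)$ on $(\Omega_0,\BG,\BP)$ is the same computation transported by the map $X^x$: the law of $X^x$ is $\mathbb P_x$, and $\BG$ is generated by $X^0$, which differs from $X^x$ by the constant $x$. I expect this transfer to be routine.

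For (ii), let $u_1,u_2$ be probabilistic solutions with exceptional sets $N_1,N_2$. The union $N_1\cup N_2$ is again $m$-inessential. For $x\in D\setminus(N_1\cup N_2)$, part (i) produces two solutions of $\operatorname{BSDE}^{\tau_D}(g(X_{\tau_D}),f,X,\mathbb F,\mathbb P_x)$. Theorem \ref{th2.2}(i) then forces the two $Y$'s to coincide, and evaluating at $t=0$ gives $u_1(x)=u_2(x)$. On $D^c$ both functions equal $g$ off the exceptional set.

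Part (iii) is the main obstacle, because the solution has to be a single Borel function $u$ that is consistent in $x$, not merely a family of BSDE solutions. Set $N_0=\{x\in D:\BE_x|g(X_{\tau_D})|=\infty\}\cup\{x\in D:\BE_x\tau_D=\infty\}$. By Remark \ref{rem3.2}, both $x\mapsto\BE_x|g(X_{\tau_D})|$ and $x\mapsto\BE_x\tau_D=R^D1$ are $(P^D_t)$-excessive. The first is finite a.e. by hypothesis; the second is finite everywhere because $D$ is bounded and $L\neq0$. Hence $N_0$ is contained in an $m$-inessential set $N$.

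For $x\notin N$, Theorem \ref{th2.2}(ii) applies: $f$ is bounded on $\bar D\times[-k,k]$ and $\BE_x\tau_D<\infty$, so $\BE_x\int_0^{\tau_D}|f(X_t,y)|dt<\infty$. This yields a solution $(Y^x,M^x)$ under $\mathbb P_x$. I then define $u(x)=Y^x_0$ on $D\setminus N$, $u=g$ on $D^c$, and $u=0$ on $N\cap D$.

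Two facts remain to be proved.
\begin{enumerate}
\item[(a)] $u$ is Borel measurable.
\item[(b)] $Y^x_t=u(X_t)$ for $t<\tau_D$, $\mathbb P_x$-a.s.
\end{enumerate}
Once these hold, taking expectations in the BSDE gives \eqref{eq4.1}, since $M^x$ is uniformly integrable. Both facts can be obtained by building $Y$ through an approximation scheme that works uniformly in $x$. I would first truncate $f$ and $g$ to $f_k$ and $g_k$, bounded and Lipschitz in $y$. For each truncation I obtain $u_k$ by Picard iteration, or from the linear equation $u=\BE_x[g_k(X_{\tau_D})]+R^D[f_k(\cdot,u)]$ solved by a fixed point in $\BB_b$; this makes $u_k$ Borel, and the Markov property gives (b) for $u_k$. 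Finally, the stability estimate Proposition \ref{prop2.4}(ii) lets me pass $u_k\to u$ pointwise on $D\setminus N$. The limit is Borel and satisfies (b), since (b) for $u_k$ holds path-by-path on a common full-measure set. Alternatively, I could cite the Markov-type BSDE results behind \cite{KR:CM}, \cite{K:SPA}, which give exactly this representation.
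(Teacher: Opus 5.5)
Your arguments for (i), (ii) and the set-up of (iii) coincide with the paper's. That set-up consists of the $m$-inessential set from Remark~\ref{rem3.2}, the solutions $(Y^x,M^x)$ from Theorem~\ref{th2.2}(ii), and $u(x)=Y^x_0$. The paper then obtains your facts (a) and (b) in one stroke by citing \cite[Proposition 3.5]{KR:JFA}, so your fallback of quoting the Markov-type BSDE literature is exactly its proof.

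Your self-contained truncation scheme is a genuinely different route to (a)--(b), and it would give an explicit Borel construction. However, its inner step fails as stated. For bounded $f_k$ with $K=\operatorname{Lip}_y(f_k)$, the map $\Phi(v)=P_Dg_k+R^D[f_k(\cdot,v)]$ on $\BB_b(D)$ has Lipschitz constant $K\sup_x\BE_x\tau_D$, which need not be $<1$. In the linear model case $f_k(x,y)=-Ky$, the Picard iteration is the Neumann series $\sum_j(-KR^D)^jP_Dg_k$. This diverges for large $K$: for $L=\Delta$, it diverges as soon as $K$ exceeds the first Dirichlet eigenvalue of $-\Delta$ on $D$. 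Since $\operatorname{Lip}_y(f_k)\to\infty$ whenever $f$ is not Lipschitz, neither Picard iteration nor the Banach fixed point theorem produces $u_k$ as you describe. Note also that truncating in $y$ does not by itself make $f$ Lipschitz; you need, e.g., a Yosida regularization.

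The repair uses (H3). Since $y\mapsto f_k(x,y)+Ky$ is nondecreasing and $K$-Lipschitz, the discounted map
\[
\tilde\Phi(v)(x)=\BE_x\Big[e^{-K\tau_D}g_k(X_{\tau_D})+\int_0^{\tau_D}e^{-Ks}\big(f_k(X_s,v(X_s))+Kv(X_s)\big)\,ds\Big]
\]
is a contraction on $\BB_b(D)$. Its constant is
\[
\sup_x\BE_x\big(1-e^{-K\tau_D}\big)\le 1-e^{-K\sup_x\BE_x\tau_D}<1,
\]
by Jensen's inequality and Proposition~\ref{prop4.2}.

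Let $v$ be its Borel fixed point, extended by $g_k$ on $D^c$. The Markov property makes
\[
e^{-K(t\wedge\tau_D)}v(X_{t\wedge\tau_D})+\int_0^{t\wedge\tau_D}e^{-Ks}\big(f_k(X_s,v(X_s))+Kv(X_s)\big)\,ds
\]
a martingale. It\^o's product formula with $e^{K(t\wedge\tau_D)}$ then shows that $v(X_{\cdot\wedge\tau_D})$ solves $\operatorname{BSDE}^{\tau_D}(g_k(X_{\tau_D}),f_k,X,\mathbb F,\BP_x)$. Hence $v$ solves the undiscounted equation and is the desired $u_k$.

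When you let $k\to\infty$, apply Proposition~\ref{prop2.4}(ii) with the $f_k$-equation as the reference solution (the monotone one) and $Y'=Y^x$. The error term is then $\BE_x\int_0^{\tau_D}|f_k-f|(X_s,Y^x_s)\,ds$, evaluated along the fixed process $Y^x$. It vanishes by dominated convergence, since truncation followed by Yosida regularization gives $|f_k(x,y)|\le|f(x,y)|+|f(x,0)|$. In the opposite order you would have to control $f$ along $Y^{x,k}$, which is hopeless without a growth bound on $f$.
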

\begin{proof}
The first assertion in (i)  easily follows from the Markov property of $(X_t)$.
The second one follows then from the fact that $Y$ and $Y^x$ have the same law. Indeed, we have
\[
u(X_{t\wedge\tau_D})=\BE_x\Big(g(X_{\tau_D})+\int_{t\wedge\tau_D}^{\tau_D} f(X_s,u(X_s))\,ds\,\big|\,\FF_t\Big).
\]
Hence,  for any Borel sets $B_1,\dots, B_n$ and $0\le t_1,\dots,t_n\le t$, 
\begin{align*}
&\BE_x(\fch_{\{X_{t_1}\in B_1,\dots,X_{t_n}\in B_n\}}u(X_{t\wedge\tau_D}))\\
&\qquad=
\BE_x\Big(\fch_{\{X_{t_1}\in B_1,\dots, X_{t_n}\in B_n\}}(g(X_{\tau_D})+\int_{t\wedge\tau_D}^{\tau_D} f(X_s,u(X_s))\,ds)\Big).
\end{align*}
Since the law of $X$ under $\BP_x$ equals the law of $X^x$ under $\BP$, we deduce from the above equation that
\begin{align*}
&\BE(\fch_{\{X^x_{t_1}\in B_1,\dots, X^x_{t_n}\in B_n\} }u(X^x_{t\wedge\tau_D^x}))\\
&\qquad=\BE\Big(\fch_{\{X^x_{t_1}\in B_1,\dots, X^x_{t_n}\in B_n\}}(g(X^x_{\tau^x_D})
+\int_{t\wedge \tau^x_D}^{\tau^x_D} f(X^x_s,u(X^x_s))\,ds)\Big).
\end{align*}
From this and the fact that $\BG$ is the completion of the natural filtration generated by $(X^0_t)$ one easily obtains
\[
u(X^x_{t\wedge \tau^x_D})=\BE\Big(g(X^x_{\tau^x_D})+\int_{t\wedge \tau^x_D}^{\tau^x_D} f(X^x_s,u(X^x_s))\,ds\,\big|\,\GG_t\Big),
\]
which in turn implies the second assertion of (i). Assertion (ii) follows from Theorem \ref{th2.2}.
As for (iii), first note that, by Remark \ref{rem3.2} and the monotonicity of $f$, 
there exists an $m$-inessential set $N\subset \mathbb R^d$ such that
$\int_{D^c} |g(z)|\,P_D(x,dz)<\infty$ for $x\in D\setminus N$.
Then, by Theorem \ref{th2.2},  for any $x\in D\setminus N$ there exists a unique solution 
$(Y^x,M^x)$ to $\operatorname{BSDE}^{\tau_D}(g(X_{\tau_D}),f,X,\mathbb F,\BP_x)$.
By \cite[Proposition 3.5]{KR:JFA}, $Y^x_t=u(X_t),\, t\in [0,\tau_D]$,  $\BP_x$-a.s. for 
$x\in D\setminus N$, where  $u(x):=\BE_x Y^x_0,\, x\in D\setminus N$. This completes the proof of (iii).
\end{proof}

\subsection{Sobolev space  solutions}

Throughout  this subsection, we assume additionally that the sector condition  \eqref{eq1.3} holds and $g\in H^\psi$.
Furthermore, we assume that 
\begin{equation}
\label{eq4.4}
1/|\psi(\xi)|\quad\mbox{is locally integrable on }\BR^d,\qquad \lim_{|\xi|\rightarrow\infty}|\psi(\xi)|=\infty,
\end{equation}
with the convention that $1/|\psi(\xi)|=\infty$ for $\xi$ such that $\psi(\xi)=0$. This
implies that  $(\Psi, H^\psi)$ is  transient (see \cite[Definition 3.5.19.]{J2} and \cite[Corollary 37.6]{Sa}).

We denote by $(T^D_t)$ the $C_0$-semigroup on $L^2(D)$ related to the form $(\Psi,H^\psi_0(D))$, that is the part of 
$(\Psi,H^\psi)$ on $D$, and we denote by   $(G^D_\alpha)$ the corresponding resolvent (see \cite[Theorem 3.5.7]{Oshima}). 
Note that
\[
H_0^\psi(D)=\{v \in H^\psi: \tilde{v}=0 \text{ q.e. on  } D^c\},
\]
where $\tilde v$ is a quasi-continuous version of $v$. 

\begin{remark}
\label{rem4.3}
We shall use the following standard consequence of coercivity of $\Psi$ (second condition in \eqref{eq4.4}): the embedding 
\[
 H_0^\psi(D) \hookrightarrow L^2(D)
\]
is compact and there exists a
constant \(C>0\) such that
\[
 \|v\|_{L^2(D)}^2 \leq C\,\Psi(v,v),\quad v\in H^\psi_0(D).
\]
To see this, consider a sequence  $(u_n)$ bounded in \(H_0^\psi(D)\). Since
\(u_n=0\) q.e. on \(D^c\), in particular \(u_n=0\) a.e. on \(D^c\). As \(D\) is
bounded, the sequence is tight in $L^2(\mathbb R^d)$. It remains to check the
uniform continuity with respect to translations. By Plancherel's theorem,
\[
 \|u_n(\cdot+h)-u_n\|_{L^2(\mathbb R^d)}^2
 =
\int_{\BR^d}|e^{ih\cdot \xi}-1|^2 |\widehat u_n(\xi)|^2\,d\xi,
\]
and consequently,
\begin{align*}
 \|u_n(\cdot+h)-u_n\|_{L^2(\BR^d)}^2
 &\leq \int_{|\xi|\le R}
 |e^{ih\cdot \xi}-1|^2 |\widehat u_n(\xi)|^2\,d\xi\\
 &\quad+\frac{4}{\inf_{|\xi|>R}|\psi(\xi)|}
 \int_{\mathbb R^d} |\psi(\xi)| |\widehat u_n(\xi)|^2\,d\xi.
\end{align*}
Since \(|\psi(\xi)|\to\infty\) as $|\xi|\rightarrow\infty$, the second term on the  right-hand side of the above inequality is uniformly small in \(n\) for \(R\) sufficiently large,
while the first term tends to zero uniformly in \(n\) as \(h\to0\). Hence, by
the Fr\'echet--Kolmogorov compactness criterion, \((u_n)\) is relatively compact
in \(L^2(\BR^d)\), and therefore also in \(L^2(D)\). Now,  a standard contradiction argument gives the asserted Poincar\'e inequality.
\end{remark}

For \(\alpha>0\), we write 
\[
\Psi_\alpha(u,v)=\Psi(u,v)+\alpha(u,v)_{L^2(\mathbb R^d)}.
\]
Define
\[
\mathscr{H}_D^\alpha=\{v \in H^\psi: \Psi_\alpha(v, w)=0 \text { for all } w \in H^\psi_0(D)\}.
\]
By \cite[Theorem 2.4.2]{Oshima}, for any $u\in H^\psi$ there exist unique  
$h\in \mathscr{H}_D^\alpha$ and $v\in H^\psi_0(D)$ such that
\[
u=h+v.
\]
We let $H^\alpha_Du:=h$. Note that $H^\alpha_Du=u$ q.e. on $D^c$.
For any $u\in H^\psi$ and quasi-continuous Borel function $v$ that belongs to the  $m$-equivalence class of $u$ the function 
$\tilde u(x):=\BE_x[e^{-\alpha\tau_D}v(X_{\tau_D})]$,
whenever $\BE_x[e^{-\alpha\tau_D}|v(X_{\tau_D})|]<\infty$ and $\tilde u(x):=0$ otherwise, is a quasi-continuous version of 
$H_D^\alpha u$ (see \cite[Theorem 3.5.1]{Oshima}).
Recall also that for any $h\in L^2(D)$ and any Borel function $h_0$ that belongs to the $m$-equivalence class of $h$ the function 
$\tilde u(x):=\BE_x\int_0^{\tau_D} e^{-\alpha s} h_0(X_s)\,ds$,
whenever $\BE_x\int_0^{\tau_D} e^{-\alpha s} |h_0(X_s)|\,ds<\infty$ and $\tilde u(x):=0$ otherwise, is a quasi-continuous version of $G^D_\alpha h$
(see \cite[Theorem 3.3.4]{Oshima}).

\begin{definition}
We say that  $u\in H^\psi$ is a weak solution of \eqref{eq1.1} if $u-g\in H^\psi_0(D)$, $f(\cdot,u)\in L^2(D)$
and 
\[
\Psi(u,v)=\int_Df(x,u(x))v(x)\,dx, \quad v\in H^{\psi}_0(D).
\]
\end{definition}

\begin{proposition}
If $u\in H^\psi$ is a weak solution of \eqref{eq1.1}, then for any quasi-continuous Borel measurable  $m$-version $g_0$ of $g$
there exists a Borel version $\tilde u$ of $u$ which is a probabilistic solution of \eqref{eq1.1} with $g=g_0$.
\end{proposition}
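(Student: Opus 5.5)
We decompose the weak solution as $u=g+v$ with $v:=u-g\in H^\psi_0(D)$ and identify $v$ through the Green operator of the part form. Put $h:=f(\cdot,u)\in L^2(D)$. For $w\in H^\psi_0(D)$ the weak formulation gives
\[
\Psi(v,w)=(h,w)_{L^2(D)}-\Psi(g,w).
\]
Our plan is to show that $v=G^D h-(g-H^0_D g)$ in $H^\psi$. Here $G^D=\lim_{\alpha\downarrow0}G^D_\alpha$ is the $0$-order resolvent of the part form, and $H^0_D g$ is the $0$-order harmonic projection, i.e. $\Psi(H^0_Dg,w)=0$ for all $w\in H^\psi_0(D)$ and $g-H^0_Dg\in H^\psi_0(D)$.

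First we establish the $\alpha=0$ objects. By Remark \ref{rem4.3}, $\Psi$ is coercive on $H^\psi_0(D)$, since $\|w\|^2_{L^2}\le C\Psi(w,w)$ and the sector condition \eqref{eq1.3} holds. Lax--Milgram then gives a unique $v_0\in H^\psi_0(D)$ with $\Psi(v_0,w)=(h,w)-\Psi(g,w)$, so $v=v_0$. For the same reason $G^D h$ (solving $\Psi(G^Dh,w)=(h,w)$) and $H^0_Dg:=g-z$ (with $z\in H^\psi_0(D)$, $\Psi(z,w)=\Psi(g,w)$) are well defined. Thus
\[
u=H^0_Dg+G^Dh .
\]

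Next we obtain probabilistic representations by letting $\alpha\downarrow0$ in the $\alpha$-representations quoted from \cite[Theorems 3.5.1, 3.3.4]{Oshima}. For $G^D_\alpha h$ we first treat $h\ge0$ and then split $h=h^+-h^-$. By monotone convergence, $\BE_x\int_0^{\tau_D}e^{-\alpha s}h_0(X_s)\,ds\uparrow R^Dh_0(x)$. On the analytic side, $G^D_\alpha h\to G^Dh$ in $H^\psi_0(D)$; this follows from coercivity, since $\Psi(G^D_\alpha h-G^Dh,w)=-\alpha(G^D_\alpha h,w)$ and $\|G^D_\alpha h\|$ is bounded uniformly in $\alpha$. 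A subsequence of the quasi-continuous versions then converges q.e., so $R^Dh_0$ is a quasi-continuous version of $G^Dh$. In particular $R^D|h_0|<\infty$ q.e.; the exceptional set is $m$-polar, and by Remark \ref{rem3.2} together with Proposition \ref{prop3.1} it can be enlarged to an $m$-inessential set. For $H^\alpha_Dg$ we argue the same way. We write $g_0=g_0^+-g_0^-$ and use $\BE_x[e^{-\alpha\tau_D}g_0^\pm(X_{\tau_D})]\uparrow \BE_x[g_0^\pm(X_{\tau_D});\tau_D<\infty]$. Since $\BE_x\tau_D=R^D1<\infty$ (take $h=1$ above, $D$ bounded), $\tau_D<\infty$ a.s. Finally, $H^\alpha_Dg\to H^0_Dg$ in $H^\psi$: indeed $H^\alpha_Dg-H^0_Dg\in H^\psi_0(D)$, and testing gives $\Psi(H^\alpha_Dg-H^0_Dg,w)=-\alpha(H^\alpha_Dg,w)\to0$ with the norms uniformly bounded. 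We conclude that $x\mapsto\BE_xg_0(X_{\tau_D})$ is a quasi-continuous version of $H^0_Dg$ off an $m$-inessential set.

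Finally we assemble $\tilde u$. Choose a Borel version $f_0$ of $h$, and define
\[
\tilde u(x):=\BE_xg_0(X_{\tau_D})+\BE_x\int_0^{\tau_D}f_0(X_s)\,ds \quad\text{on } D, \qquad \tilde u:=g_0 \quad\text{on } D^c.
\]
This is a quasi-continuous version of $u$ on $\BR^d$ off an $m$-inessential set $N$. On $D^c$ it agrees with $u$ q.e., because $u=g$ q.e. on $D^c$. It remains to replace $f_0(X_s)$ by $f(X_s,\tilde u(X_s))$ inside the integral. Since $\tilde u=u$ a.e. and $f(\cdot,\tilde u)=f_0$ a.e. on $D$, the set $\{f(\cdot,\tilde u)\ne f_0\}\cap D$ is $m$-null. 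By \eqref{eq3.3}, or more directly because $R^D$ does not charge $m$-null sets (guaranteed by (ACP), or by absolute continuity of the resolvent of the part process for Dirichlet forms), the two integrals coincide.

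I expect the main obstacle to be this last point, together with the q.e.-to-inessential upgrade. We need $\BE_x\int_0^{\tau_D}\fch_A(X_s)\,ds=0$ for every $x\in D\setminus N$ and every $m$-null $A$, and not merely for a.e. $x$. This function is excessive and vanishes a.e. I would first try to use this fact to make its positivity set $m$-inessential, via \cite[Theorem III.5.7]{BG}, enlarging $N$ accordingly. If that does not work, I would invoke the absolute continuity of the Green kernel \eqref{eq3.3}.
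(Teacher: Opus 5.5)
Your argument is correct, but it follows a different route from the paper. The paper never leaves $\alpha>0$ on the analytic side. It writes $u-H^\alpha_Dg=G^D_\alpha[f(\cdot,u)+\alpha u]$, takes a quasi-continuous Borel version $u_0$ of $u$, and inserts the Borel version $f(\cdot,u_0)+\alpha u_0$ into Oshima's representation, so that $u_0$ sits on both sides of the $\alpha$-identity. It then lets $\alpha\downarrow0$ directly in the probabilistic formula by dominated convergence. For this it first proves that the set where $\BE_x(|g(X_{\tau_D})|+\int_0^{\tau_D}(|f(X_s,u_0(X_s))|+|u_0(X_s)|)\,ds)=\infty$ is $m$-null, using the Poincar\'e inequality, transience of $\Psi$ and Fatou's lemma, and afterwards upgrades ``a.e.'' to ``q.e.'' by quasi-continuity.

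You instead pass to $\alpha=0$ in $H^\psi$ first, via Lax--Milgram, $u=H^0_Dg+G^Dh$, and norm convergence of $G^D_\alpha h$ and $H^\alpha_Dg$. You then get finiteness and identification q.e. in one stroke, from q.e. convergence of a subsequence of quasi-continuous versions plus monotone convergence. This needs only coercivity on $H^\psi_0(D)$, not transience of the whole-space form, and it builds \eqref{eq4.1}--\eqref{eq4.2} into the definition of $\tilde u$. The price is twofold:
\begin{itemize}
\item You must split into positive and negative parts. Here you need $g^\pm\in H^\psi$, which holds because $H^\psi=H^{\operatorname{Re}\psi}$ under \eqref{eq1.3} and normal contractions operate there.
\item You must replace $f_0$ by $f(\cdot,\tilde u)$ at the end, a step the paper avoids through its choice of Borel version.
\end{itemize}

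On that last step, your fallback via (ACP) and \eqref{eq3.3} is not available, since (ACP) is not assumed in this subsection. Your first route does work. For the fixed Borel $m$-null set $A$, the function $R^D_\alpha\fch_A$ is a quasi-continuous version of $G^D_\alpha\fch_A=0$. Hence $R^D\fch_A=0$ q.e., and Proposition \ref{prop3.1} puts $\{R^D\fch_A>0\}$ inside an $m$-inessential set. Note that the result quoted in Remark \ref{rem3.2} concerns $\{u=\infty\}$, not $\{u>0\}$. To use it as you intended, apply it to the excessive function $R^D(\infty\cdot\fch_A)$, which vanishes a.e.

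Simpler still is to choose $f_0:=f(\cdot,u_0)$ with $u_0$ a quasi-continuous Borel version of $u$. Then $A\subset\{\tilde u\neq u_0\}\cap D$ has zero capacity and can be absorbed into the exceptional set. Paths started off that set never visit $A$, so no absolute continuity of the Green kernel is needed at all.
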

\begin{proof}
If $g\in H^{\psi}(\BR^d)$, then $H^\alpha_Dg\in H^{\psi}(\BR^d)$ and 
$\Psi_\alpha(H^\alpha_Dg,v)=0$ for $v\in H^{\psi}_0(D)$.  
Hence, for every $v\in H^{\psi}_0(D)$,
\[
\Psi_\alpha(u-H^\alpha_Dg,v)=\int_Df(x,u(x))v(x)\,dx+\alpha\int_D u(x)v(x)\,dx.
\]
Thus $u-H^\alpha_Dg=G^D_\alpha f(\cdot,u)+\alpha G^D_\alpha u$. Let $u_0$ be a Borel measurable and quasi-continuous 
function that belongs to the $m$-equivalence class of $u$ (see \cite[Theorem 3.5.7]{Oshima}). We may assume that $g$ is quasi-continuous
and Borel measurable.
Then, by the comments preceding the proposition,   
\[
u_0(x)=\BE_xe^{-\alpha\tau_D}g(X_{\tau_D})+\BE_x\int_0^{\tau_D} e^{-\alpha s} f(X_s,u_0(X_s))\,ds+\alpha \BE_x\int_0^{\tau_D}  e^{-\alpha s} u_0(X_s)\,ds
\] 
for a.e. $x\in D$. Letting $\alpha\rightarrow0$ we get that 
\begin{equation}
\label{eq4.3}
u_0(x)=\BE_xg(X_{\tau_D})+\BE_x\int_0^{\tau_D}f(X_s,u_0(X_s))\,ds
\end{equation}
for a.e. $x\in D$ provided we show that the set 
\[
N_0:=D\setminus \Big\{x\in D:\BE_x\Big(|g(X_{\tau_D})|+\int_0^{\tau_D}|f(X_s,u_0(X_s))|\,ds
+\int_0^{\tau_D}|u_0(X_s)|\,ds\Big)<\infty \Big\}
\]
is of Lebesgue measure zero. By the Poincar\'e inequality (see Remark \ref{rem4.3}),
\begin{align*}
&\Big\|\BE_\cdot \int_0^{\tau_D} e^{-\alpha s} |f(X_s,u_0(X_s))|\,ds+\BE_\cdot\int_0^{\tau_D}  e^{-\alpha s} |u_0(X_s)|\,ds\Big\|_{L^2(D)}\\
&\qquad\le c(\|f(\cdot,u_0)\|_{L^2(D)}+\|u_0\|_{L^2(D)}).
\end{align*}
On the other hand, by transience of $\Psi$ (see \cite[Theorem 3.5.33]{J2} and \cite[Lemma 2.4.1]{Oshima}),
for some strictly positive function $\rho$ we have
\[
\int_{\BR^d}\BE_x e^{-\alpha\tau_D}|g(X_{\tau_D})| \rho(x)\,dx
\le \sqrt{\Psi_\alpha(H^\alpha_D|g|,H^\alpha_D|g|)}\le c\sqrt{\Psi_\alpha(|g|,|g|)}\le c\sqrt{\Psi_\alpha(g,g)}.
\]
Consequently, by Fatou's lemma, $m(N_0)=0$. Now, by Remark \ref{rem3.2} and \cite[Theorem 3.2.9, Theorem 3.5.6, Corollary 3.5.3]{Oshima}, 
we infer that \eqref{eq4.3}
holds q.e. in $D$. Since $H^\alpha_Dg=g$ q.e. in $D^c$, we obtain that  $u=g$ q.e. in $D^c$.
Finally, by Proposition \ref{prop3.1} we get the result.
\end{proof}

\subsection{Viscosity solutions}

\begin{definition}
We say that $u\in C_b(\mathbb R^d)$ is a viscosity solution of the problem
\begin{equation}
\label{eq4.6}
\begin{cases}
-Lu=h & \text{in }D,\\
u=g & \text{in }D^c
\end{cases}
\end{equation}
if $u=g$ on $D^c$ and  for any $x_0\in D$ and any $\varphi\in C_b^2(\BR^d)$
 such that $u-\varphi$ attains a global maximum at $x_0$ we have 
\[
-L\varphi(x_0)\leq h(x_0)
\]
while if  $u-\varphi$ attains a global minimum at $x_0$, then
\[
-L\varphi(x_0)\geq h(x_0).
\]
\end{definition}

Consider a nonnegative function  $\rho\in C_c^\infty(\mathbb R^d)$  supported in
the unit open ball $B(0,1)$ such that
\[
\int_{\BR^d}\rho(y)\,dy=1.
\]
For $\varepsilon>0$, put
\[
\rho_\varepsilon(y):=\varepsilon^{-d}\rho(y/\varepsilon).
\]

\begin{lemma}
\label{lem4.7}
Let $U$ be an open subset of $\BR^d$ and $u,h\in C_b(\BR^d)$. Assume that $u$ is a viscosity solution of
\[
-Lu=h\quad\text{in }U.
\]
For $\varepsilon>0$ put  $u_\varepsilon=u*\rho_\varepsilon$, $h_\varepsilon=h*\rho_\varepsilon$ and define
\[
U_\varepsilon=
\{x\in U:\operatorname{dist}(x,U^c)>\varepsilon\}.
\]
Then
\[
-Lu_\varepsilon=h_\varepsilon
\quad\text{pointwise in }U_\varepsilon.
\]
\end{lemma}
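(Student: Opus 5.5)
The plan is to use the translation invariance of $L$. Writing $u_\varepsilon(x)=\int u(x-y)\rho_\varepsilon(y)\,dy$, for each fixed $y\in B(0,\varepsilon)$ the shifted function $u^y:=u(\cdot-y)$ is a viscosity solution of $-Lu^y=h(\cdot-y)$ in $U+y\supset U_\varepsilon$. Indeed, if $u^y-\varphi$ has a global maximum at $x_0\in U_\varepsilon$, then $u-\varphi(\cdot+y)$ has a global maximum at $x_0-y\in U$. Since $L$ commutes with translations, $L[\varphi(\cdot+y)](x_0-y)=L\varphi(x_0)$, and the definition gives $-L\varphi(x_0)\le h(x_0-y)$. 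The minimum case is symmetric. Note also that $u_\varepsilon\in C_b^\infty$ with all derivatives bounded, so $Lu_\varepsilon$ is defined pointwise by \eqref{eq1.2}. Moreover, $Lu_\varepsilon(x)=\int (L u^y)(x)\rho_\varepsilon(y)\,dy$ holds formally, and rigorously as $Lu_\varepsilon=u*(L^*\text{-applied mollifier})$, i.e. $Lu_\varepsilon(x)=\int u(z)\,(L\rho_\varepsilon(x-\cdot))(z)\,dz$ after moving $L$ onto $\rho_\varepsilon$.

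Averaging viscosity inequalities over $y$ is not directly legitimate, since $u^y$ is not smooth. I would therefore proceed as follows. Fix $x_0\in U_\varepsilon$ and argue by contradiction. Suppose $-Lu_\varepsilon(x_0)>h_\varepsilon(x_0)+3\delta$ (the other inequality is symmetric). The key step is a sup-convolution / doubling argument that turns the mollified inequality into a touching test function for $u$ itself. The route I would try first is the standard one for linear translation-invariant equations (as in Caffarelli–Cabré or Ishii's proof that viscosity solutions of linear equations with constant coefficients are distributional). Namely, I would show that $u$ satisfies $\int u\,(-L^*\phi)\,dx=\int h\phi\,dx$ for every $\phi\in C_c^\infty(U)$, where $L^*$ is the adjoint (triplet $(Q,\nu(-\cdot),-b)$). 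Taking $\phi=\rho_\varepsilon(x_0-\cdot)$, which is supported in $B(x_0,\varepsilon)\subset U$, this yields exactly $-Lu_\varepsilon(x_0)=h_\varepsilon(x_0)$.

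To get the distributional identity, I would approximate $u$ by smooth functions with one-sided viscosity control. Use the sup-convolution $u^\eta(x)=\sup_z\{u(z)-|x-z|^2/(2\eta)\}$. This function is semiconvex, bounded and continuous. By the usual argument, extended to nonlocal $L$ by the global-maximum formulation and by boundedness of $u$ with the Lévy integrability condition, it is a viscosity subsolution of $-Lu^\eta\le h^\eta$ in a slightly smaller set, where $h^\eta\to h$ locally uniformly. By Alexandrov's theorem, semiconvex functions are twice differentiable a.e. Moreover, a semiconvex viscosity subsolution satisfies the inequality pointwise a.e., with $L$ evaluated classically, and its distributional second derivative is bounded below. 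These two facts give $\int u^\eta(-L^*\phi)\le\int h^\eta\phi$ for $\phi\ge0$. Letting $\eta\to0$ by dominated convergence (using boundedness of $u$) gives one inequality, and inf-convolution gives the other.

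The main obstacle is the justification for semiconvex functions under a nonlocal operator. One has to pass from the a.e. pointwise inequality to the distributional one, which requires handling the singular part of $D^2u^\eta$ (a nonnegative measure, hence favorable in sign for the Laplacian-type term) and the compensated jump integral near the origin. I would treat the latter by splitting $\nu$ into $\{|y|\le r\}$ and $\{|y|>r\}$, and by using semiconvexity to bound $u^\eta(x+y)-u^\eta(x)-y\cdot\nabla u^\eta(x)\ge -C_\eta|y|^2$, so that Fatou's lemma applies in the right direction.
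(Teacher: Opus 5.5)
Your argument is correct, but it takes a genuinely different route from the paper's.

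\textbf{The paper's route.} The paper does average the translates $u^y$. It invokes the nonlocal Jensen--Ishii lemma of Barles--Imbert to conclude that, because the equation is linear, finite convex combinations $\sum_i\lambda_i u^{y_i}$ are viscosity solutions of $-Lv=\sum_i\lambda_i h^{y_i}$ in $U_\varepsilon$. It then passes from Riemann sums to $u_\varepsilon,h_\varepsilon$ by stability under locally uniform convergence. Finally, it uses $u_\varepsilon\in C_b^2$ as its own test function. So your remark that naive averaging is not legitimate is exactly the issue; the paper repairs it by doubling of variables.

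\textbf{Your route.} You instead prove the stronger statement that $u$ is a distributional solution in $U$, and then test against $\phi=\rho_\varepsilon(x_0-\cdot)$, whose support is compact in $B(x_0,\varepsilon)\subset U$. This trades the two viscosity-theory black boxes for classical real analysis: Alexandrov's theorem, and the nonnegativity of the singular part of the Hessian measure of $u^\eta+|x|^2/(2\eta)$. It also yields a reusable viscosity-implies-distributional result, at the cost of more verification.

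\textbf{Sup-convolution step.} This step is easy here precisely because of translation invariance and the global-maximum definition. If $z_0$ realizes $u^\eta(x_0)$, then $u-\varphi(\cdot+x_0-z_0)$ has a global maximum at $z_0$, so $-L\varphi(x_0)\le h(z_0)\le h^\eta(x_0):=\sup_{\bar B(x_0,r_\eta)}h$.

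\textbf{The step you flag.} This is indeed where the work is: at an Alexandrov point $x_0$ you need a global $C_b^2$ test function. You can obtain one as follows.
\begin{enumerate}
\item On $B(x_0,2\delta)$ take the quadratic $u^\eta(x_0)+p\cdot(x-x_0)+\frac12(x-x_0)^{T}(A+\kappa I)(x-x_0)$.
\item Outside, take $u^\eta*\rho_{1/k}+K/k$, where $K$ is a Lipschitz constant of $u^\eta$, and glue the two with a cutoff.
\item Let $k\to\infty$, then $\delta\to0$, then $\kappa\to0$.
\end{enumerate}
For $|y|\le1$, the compensated increment $u^\eta(x_0+y)-u^\eta(x_0)-p\cdot y$ is bounded above by $C|y|^2$ (Alexandrov near $0$, the Lipschitz bound elsewhere) and below by $-|y|^2/(2\eta)$ (semiconvexity). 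Hence dominated convergence works for an arbitrary L\'evy measure.

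\textbf{Two smaller points.}
\begin{enumerate}
\item After integrating in $x$, the jump term is an exact identity by Tonelli and Fubini. So the only genuinely one-sided step is the singular Hessian part.
\item For $\eta\to0$ you need $L^*\phi\in L^1(\BR^d)$. This holds because $L^*\phi(x)=\int\phi(x-y)\,\nu(dy)$ away from $\operatorname{supp}\phi$, which gives an $L^1$ tail bounded by $\|\phi\|_{L^1}\nu(\{|y|\ge1\})$.
\end{enumerate}
The contradiction set-up with $3\delta$ in your second paragraph is never used and can be dropped.
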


\begin{proof}
For $y\in B(0,\varepsilon)$ define
$u^y(x):=u(x-y)$, $h^y(x):=h(x-y)$.
Since $L$ is translation invariant, $u^y$ is a viscosity solution of
\[
-Lu^y=h^y
\]
in the translated domain $U+y$. In particular, $u^y$ is a viscosity
solution of this equation in $U_\varepsilon$.
Since the  equation is linear,  
finite convex combinations of viscosity solutions are viscosity solutions
(by the nonlocal Jensen–Ishii lemma, see \cite[Lemma 1]{BarlesImbert2008}).  Approximating the
integral defining $h_\varepsilon$ by finite convex combinations of the
functions $h^y$ and using the stability of viscosity inequalities under
locally uniform convergence (see \cite[Theorem~1]{BarlesImbert2008}), we conclude that $u_\varepsilon$ is a
viscosity solution of
\[
-Lu_\varepsilon=h_\varepsilon
\quad\text{in }U_\varepsilon.
\]
From this one readily obtains the desired result.
\end{proof}

\begin{theorem}
Let $h,g\in C_b(\BR^d)$ and $u\in C_b(\BR^d)$ be a viscosity solution of \eqref{eq4.6}.
Then 
\[
u(x)=\BE_x\Big\{g(X_{\tau_D})+\int_0^{\tau_D}h(X_s)\,ds\Big\},
\quad x\in D.
\]
\end{theorem}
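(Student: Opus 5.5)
The plan is to mollify $u$, use Lemma \ref{lem4.7} to get classical solutions on shrunken domains, and apply Dynkin's formula to the mollified functions. I would then let the domain grow back to $D$ and the mollification parameter go to zero.

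\emph{Step 1: Dynkin's formula on $D_\delta$.} Fix $x\in D$ and $\delta>0$ small, and set $D_\delta=\{y\in D:\operatorname{dist}(y,D^c)>\delta\}$. For $\varepsilon<\delta/2$ apply Lemma \ref{lem4.7} with $U=D$, so that $-Lu_\varepsilon=h_\varepsilon$ pointwise in $D_\varepsilon\supset \overline{D_\delta}$. Since $u\in C_b$, the function $u_\varepsilon$ lies in $C_b^\infty(\BR^d)$ with all derivatives bounded. Hence $L u_\varepsilon$ is given by (\ref{eq1.2}), is bounded and continuous, and $u_\varepsilon(X_t)-u_\varepsilon(x)-\int_0^t Lu_\varepsilon(X_s)\,ds$ is a $\BP_x$-martingale. (This follows from It\^o's formula, or by approximating $u_\varepsilon$ by $C_c^2$ functions in the generator's domain.) Let $\sigma_\delta=\tau_{D_\delta}$. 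Since $D$ is bounded and $L\neq0$, we have $\BE_x\tau_D<\infty$. Optional stopping at $\sigma_\delta\wedge t$, followed by $t\to\infty$ with bounded convergence, gives
\[
u_\varepsilon(x)=\BE_x u_\varepsilon(X_{\sigma_\delta})+\BE_x\int_0^{\sigma_\delta}h_\varepsilon(X_s)\,ds .
\]
Here we use that $X_s\in D_\delta$ for $s<\sigma_\delta$, so $-Lu_\varepsilon(X_s)=h_\varepsilon(X_s)$.

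\emph{Step 2: $\varepsilon\to0$.} Because $u,h\in C_b$, we have $u_\varepsilon\to u$ and $h_\varepsilon\to h$ pointwise and boundedly. Dominated convergence, with $\BE_x\sigma_\delta\le \BE_x\tau_D<\infty$, yields
\[
u(x)=\BE_x u(X_{\sigma_\delta})+\BE_x\int_0^{\sigma_\delta}h(X_s)\,ds.
\]

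\emph{Step 3: $\delta\downarrow0$.} This is the main obstacle: passing from the exit time of $D_\delta$ to that of $D$. The times $\sigma_\delta$ increase to some $\sigma\le\tau_D$, and the integral term converges to $\BE_x\int_0^{\sigma}h(X_s)\,ds$. I would show that a.s. $\sigma=\tau_D$ and $u(X_{\sigma_\delta})\to u(X_{\tau_D})=g(X_{\tau_D})$.

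On the event where $\sigma_\delta=\sigma$ for all small $\delta$, the path jumps from inside $D_\delta$ directly out. It is then at $X_\sigma\notin D_\delta$ for every small $\delta$, hence $X_\sigma\in D^c$ and $\sigma=\tau_D$; in this case $u(X_{\sigma_\delta})=u(X_{\tau_D})=g(X_{\tau_D})$.

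On the complementary event, $\sigma_\delta<\sigma$ for all $\delta$, and quasi-left continuity of the L\'evy process gives $X_{\sigma_\delta}\to X_{\sigma}$ with $X_{\sigma-}=X_\sigma$. Since $\operatorname{dist}(X_{\sigma_\delta},D^c)\le\delta$, the limit $X_\sigma$ lies in $\partial D\subset D^c$, so again $\sigma=\tau_D$. Continuity of $u$ then gives $u(X_{\sigma_\delta})\to u(X_{\tau_D})=g(X_{\tau_D})$, because $u=g$ on $D^c$.

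Bounded convergence then finishes the proof. The delicate points are the bounded-derivative justification of Dynkin's formula for the nonlocal part, which uses $u_\varepsilon\in C_b^2$ and integrability of $1\wedge|y|^2$ against $\nu$, and the quasi-left-continuity argument at the exit time.
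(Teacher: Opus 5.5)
Your proposal is correct and follows the paper's proof essentially step by step: mollify $u$ via Lemma~\ref{lem4.7}, apply Dynkin's formula on the shrunken domains, let $\varepsilon\downarrow0$ by dominated convergence, and then let the domains grow back to $D$ using quasi-left-continuity. Your Step~3 is slightly more careful than the paper, which simply asserts $\tau_{D_n}\uparrow\tau_D$; you instead check explicitly that the limit $\sigma$ of the exit times equals $\tau_D$ and that $u(X_{\sigma_\delta})\to g(X_{\tau_D})$.
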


\begin{proof}
For $n\ge 1$ let
\[
D_n=\{x\in D:\operatorname{dist}(x,D^c)>2/n\}.
\]
Then $D_n\subset\subset D$ and  $D_n\uparrow D$. 
Fix $x\in D_n$ and $0<\varepsilon<1/n$. By Lemma~\ref{lem4.7},
\[
-Lu_\varepsilon=h_\varepsilon\quad \text{pointwise in }D_\varepsilon.
\]
Since $D_n\subset D_\varepsilon$, Dynkin's formula (see \cite[Theorem~3.9.4, pp.~145--146]{Kolokoltsov2011}) yields
\[
\BE_xu_\varepsilon(X_{\tau_{D_n}})-u_\varepsilon(x)=
\BE_x\int_0^{\tau_{D_n}}Lu_\varepsilon(X_s)\,ds=
-\mathbb E_x
\int_0^{\tau_{D_n}}h_\varepsilon(X_s)\,ds.
\]
Letting $\varepsilon\downarrow0$ and using the dominated convergence theorem we get
\begin{equation}
\label{eq4.9}
u(x)=\BE_x\Big\{u(X_{\tau_{D_n}})+\int_0^{\tau_{D_n}}h(X_s)\,ds\Big\}.
\end{equation}
Since $\tau_{D_n}\uparrow\tau_D$, it follows from the quasi-left-continuity of $X$ (see \cite[Chapter~I, Proposition~7, p.~21]{Bertoin1996}) that
\[
X_{\tau_{D_n}}\rightarrow X_{\tau_D}
\quad\text{almost surely}.
\]
Letting $n\to\infty$ in \eqref{eq4.9} and using
dominated convergence once again we obtain the desired representation.
\end{proof}

\section{Stability results for solutions of PDEs with Dirichlet exterior condition}

Throughout the section, $L^n$ and $L$ are L\'evy operators with Fourier symbols $\psi_n$ and $\psi$, respectively.
We let $X^{0,n}$, $X^0$  denote associated L\'evy processes,  and  $P^n_x$, $P_x$
the corresponding  canonical laws  related to $L^n$ and $L$, respectively. 

Let us recall the assumptions (H1)--(H3) and introduce (H4) and (H5).
\begin{enumerate}
\item[(H1)] $\lim_{n\rightarrow\infty}\psi_n(\xi)=\psi(\xi)$ for $\xi\in\BR^d$.
\item[(H2)] 
$f\in C(\bar D\times \BR)$ and $g\in C(\BR^d)$;
\item[(H3)] $\big(f(x,y_1)-f(x,y_2)\big)(y_1-y_2)\le0$  for any $x\in D$ and $y_1,y_2\in\BR$;
\item[(H4$_x$)] $\int_{D^c} |g(z)|\,P_D(x,dz)<\infty$;
\item[(H5$_x$)] for every $n\ge 1$, $\int_{D^c} |g(z)|\,P^n_D(x,dz)<\infty$.
\end{enumerate}

\subsection{Pointwise stability}

As for the limiting process, we will need the assumption
\begin{equation}
\label{eq5.1}
\BP_x(\tau_{\bar D}=\tau_D)=1.
\end{equation}
Proposition \ref{prop6.1} and Example \ref{ex6.2} in the Appendix suggest that assumption \eqref{eq5.1}
is  close to being necessary in our main stability result proved below. 
Some sufficient conditions for \eqref{eq5.1} will be given in Section \ref{sec4}. 

We denote by $\VV$ the class of all  nondecreasing convex functions
$\varphi:[0,\infty)\rightarrow[0,\infty)$ such that
\[
\varphi(0)=0\quad\text{and}\quad
\lim_{r\rightarrow\infty}\frac{\varphi(r)}{r}=\infty.
\]

\begin{theorem}
\label{th3.4}
Let $u$ be a probabilistic solution of \eqref{eq1.1} with exceptional set $N$ and for $n\ge1$ let $u_n$ be a probabilistic solution of 
\eqref{eq1.1} with $L$ replaced by $L^n$ with exceptional set $N_n$. Set $N_\infty=\bigcup_{n} N_n$. Suppose that \mbox{\rm(H1)} is satisfied and
\begin{enumerate}[\rm(i)]
\item condition    \mbox{\rm(\ref{eq5.1})} is satisfied for every $x\in D\setminus N$;
\item $f,g$ satisfy  \mbox{\rm(H2)--(H3)};
\item  \mbox{\rm(H4$_x$)} holds for $x\in D\setminus N$, \mbox{\rm (H5$_x$)} holds for $x\in D\setminus N_\infty$;
\item there exists $\varphi\in \VV$ such that
\[
\sup_{n\ge1} \int_{D^c}\varphi(|g(y)|)P^n_D(x,dy)<\infty,\quad x\in D\setminus N_\infty.
\]
\end{enumerate}
Then $u_n(x)\rightarrow u(x)$ for every $x\in D\setminus (N\cup N_\infty)$.
\end{theorem}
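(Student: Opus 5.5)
Fix $x\in D\setminus(N\cup N_\infty)$. The plan is to reduce the statement to Theorem \ref{th2.4}, applied on one probability space carrying all the processes. By Theorem \ref{th4.2}(i), $u_n(x)=Y^n_0$ and $u(x)=Y_0$. Here $(Y^n,M^n)$ solves $\operatorname{BSDE}^{\tau_n}(g(X^n_{\tau_n}),f,X^n,\BG^n,\BP)$ with $X^n=x+X^{0,n}$ and $\tau_n=\tau^{x,n}_D$. Likewise $(Y,M)$ solves the limiting BSDE with $X=x+X^0$ and $\tau=\tau^x_D$.

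\textbf{Coupling.} The first step is to produce a coupling. By (H1) and the classical result on convergence of L\'evy processes (e.g.\ \cite{JS}), $X^{0,n}\to X^0$ in law in $D([0,\infty);\BR^d)$. By Skorokhod representation we realize copies with $X^n\to X$ a.s. Since $Y_0$ and $Y^n_0$ depend only on the laws of the processes (they are given by \eqref{eq4.1}), this does not change $u_n(x)$ or $u(x)$. On the representing space we take $\BG^n$ and $\BG$ to be the augmented natural filtrations. We then need the weak convergence $\BG^n\to\BG$ of \eqref{eq2.2}. For processes with independent increments converging in probability this is standard: conditional expectations of $\BG_T$-measurable bounded functionals are continuous functionals of past increments, and $X^n-X^n_{t}$ is independent of the past. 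The argument is as in \cite{RS2}, where the same issue was handled.

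\textbf{Exit quantities and main obstacle.} The main obstacle is \eqref{eq2.9}, that is, showing $\BE|\tau_n-\tau|\to0$ and $X^n_{\tau_n}\to X_\tau$ in probability.
\begin{itemize}
\item[(a)] \emph{Pathwise semicontinuity.} For a.e.\ path we would show $\liminf\tau_n\ge\tau$ (using that $D$ is open and Skorokhod convergence) and $\limsup\tau_n\le\bar\tau=\tau_{\bar D}$ (exit from $\bar D$ is seen by approximating paths). With (i), this gives $\tau_n\to\tau$ a.s. away from times where $\tau$ is a jump time of $X$ that causes trouble.
\item[(b)] \emph{Exit position.} When $\tau$ is a continuity point of $X$, the convergence of $X^n_{\tau_n}$ follows. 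When $\Delta X_\tau\ne0$, the Skorokhod time change matches the jump of $X^n$ near $\tau$ to the jump of $X$. Proposition \ref{prop4.3} guarantees $X_{\tau-}\in D$, so the approximating paths stay in $D$ until the matching jump, and their post-jump position converges to $X_\tau$. One must check that $X_\tau\notin\partial D$ cannot cause ambiguity; the event $X_\tau\in\partial D$ via a jump is handled by (i), since then $\tau=\bar\tau$ forces the path to enter $\bar D^c$ immediately.
\item[(c)] \emph{$L^1$ convergence of $\tau_n$.} We would upgrade a.s.\ convergence using uniform integrability of $\tau_n$. Since $D$ is bounded, there are uniform exponential moments $\sup_n\BE e^{\lambda\tau_n}<\infty$. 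To get them, pick $t_0$ and $\delta>0$ with $\sup_n\sup_{y\in D}\BP(y+X^{0,n}_{t_0}\in D)\le1-\delta$; this follows from tightness and (H1), since the limit $X^0$ is nontrivial. The Markov property then gives a geometric tail.
\end{itemize}
Finally, $\BP(\tau>0)=1$ holds because $x\in D$ is interior and paths are right continuous.

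\textbf{Concluding step.} Uniform integrability \eqref{eq2.15} comes from (iv) by de la Vall\'ee-Poussin, since $\BE\varphi(|g(X^n_{\tau_n})|)=\int\varphi(|g|)\,dP^n_D(x,\cdot)$. Integrability (a), (b) of Theorem \ref{th2.4} follows from (iii). Theorem \ref{th2.4} then yields $Y^{n,(\tau_n)}\to Y^{(\tau)}$ in probability in $D([0,T];\BR)$. Evaluating at $t=0$ is continuous in the Skorokhod topology, so $Y^n_0\to Y_0$ in probability. These are deterministic, since $\GG_0$ is trivial, hence $u_n(x)\to u(x)$.
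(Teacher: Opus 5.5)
Your overall route is the paper's: the reduction via Theorem \ref{th4.2}, Skorokhod coupling, and filtration convergence from independent increments (the paper cites \cite{CMS}). The paper then checks \eqref{eq2.9} through pathwise exit-time and exit-position convergence plus uniform moment bounds (Proposition \ref{prop4.2}), gets \eqref{eq2.15} from (iv) by de la Vall\'ee-Poussin, and applies Theorem \ref{th2.4}.

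The gap is in step (a), which is exactly the crux you identified. The lower bound $\liminf_n\tau_n\ge\tau$ does not follow from openness of $D$ and $J_1$ convergence when the limit path jumps. The closure of $\{X_s:s\le\tau-\varepsilon\}$ contains the left limits $X_{s-}$, which may lie on $\partial D$, and approximating paths can exit near such a time. Example \ref{ex6.2}(i) shows exactly this. There $x_s=s$ on $[0,1)$ reaches the boundary point $1$ as a left limit and jumps back to $1/2$. Then $x^n\to x$, yet $t_n\to1<2=t$.

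Your caveat ``away from times where $\tau$ is a jump time of $X$ that causes trouble'' places the difficulty at the wrong time. Jumps at $\tau$ itself are harmless for $\tau_n\to\tau$. The harmful ones occur at times $s<\tau$, from $X_{s-}\in\partial D$ back into $D$. As written, you neither identify this event nor show it is null, so $\tau_n\to\tau$ a.s.\ is not established.

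The missing ingredient is already in your proposal, but you use it only at $s=\tau$. Proposition \ref{prop4.3} excludes jumps with $X_{s-}\in\partial D$ for all $s\in(0,\tau_D]$. Hence for $s<\tau$ one has $X_{s-}\in D$: a left limit in $\partial D$ without a jump would force $X_s=X_{s-}\in\partial D$, contradicting $s<\tau$. So $\{X_s,X_{s-}:s\le\tau-\varepsilon\}$ is a compact subset of $D$, and the lower bound follows.

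This is condition (a) of Proposition \ref{prop6.1}, while hypothesis (i) supplies its condition (b). Together they give $\tau_n\to\tau$ a.s.\ with no leftover exceptional event. The same observation is what justifies your claim in (b) that the approximating paths stay in $D$ up to the matching jump.

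With this inserted, the remaining steps go through as you describe. Two small points remain: fix $T$ with $\BP(\tau=T)=0$ and $\BP(\Delta X_T=0)=1$ (Remark \ref{rem2.8}) before invoking Theorem \ref{th2.4}. Also, your uniform tail bound in (c) holds only for large $n$ unless every $X^{0,n}$ is nonconstant, which is harmless for the limit.
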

\begin{proof}
Fix $x\in D\setminus (N\cup N_\infty)$.  By Theorem \ref{th4.2},  it suffices to show that
\begin{equation}
\label{eq5.3}
Y^{x,n}_0\arrowdl Y^{x}_0\quad\mbox{in }\BR,
\end{equation}
where $Y^{x}$ (resp. $Y^{x,n}$) is the first component of the solution of BSDE${}^{\tau^x_D}(\zeta^x,f,X^x,\BG,\BP)$ with  $\zeta^x=g(X^x_{\tau^x_D})$ 
(resp. BSDE${}^{\tau^{x,n}_D}(\zeta^{x,n},f,X^{x,n},\BG^n,\BP)$ with $\zeta^{x,n}=g(X^{x,n}_{\tau^{x,n}_D})$).
To show (\ref{eq5.3}) we will use Theorem \ref{th2.4} with $X=X^x$,   $X^n=X^{x,n}$, $\tau=\tau^x_D$, 
$\tau_n=\tau^{x,n}_D$ and $T>0$ chosen so  that $\BP(\tau^x_D=T)=0$ and $\BP(\Delta X^x_T=0)=1$ 
(see Remark \ref{rem2.8}). Hypothesis (H1) implies that $\lim_{n\rightarrow\infty}
\sup_{s\le t\le q} |e^{-(t-s)\psi_n(\xi)}-e^{-(t-s)\psi(\xi)}|=0$ for all $\xi\in\BR^d,q>0$, 
so by \cite[Corollary 2]{JS}, $X^{x,n}\arrowd X^x$ in $D([0,T];\BR^d)$ for $T>0$. 
By the Skorokhod representation theorem, we may assume that the convergence  holds in probability $\BP$. 
Since the processes $X^{x,n}$ have independent increments, \cite[Proposition 2]{CMS} implies that in fact  condition (\ref{eq2.2})  
is satisfied for every $T>0$. 
From Propositions \ref{prop6.1}, \ref{prop4.3} and
(\ref{eq5.1}) we have  $\tau^{x,n}_D\arrowp\tau^x_D$ for every  $x\in D\setminus N$.
Clearly $\BP(\tau^x_D>0)=1$.
Consequently, by Proposition \ref{prop4.2},  the first three conditions in 
\eqref{eq2.9} are  satisfied. 
Furthermore, by  Propositions \ref{prop6.1}, \ref{prop4.3} again, also the fourth condition in (\ref{eq2.9}) is satisfied. 
Next, \eqref{eq2.15} holds thanks to (iv) while  (a), (b) of Theorem \ref{th2.4} hold thanks to (iii).
Consequently, the assertion follows from  Theorem \ref{th2.4}.
\end{proof}

\begin{remark}
\label{rem5.2}
There is a qualitative difference between condition \textup{(i)} and conditions
\textup{(ii)--(iv)} of Theorem \ref{th3.4}. The latter concern only the
regularity, monotonicity, growth, and integrability properties of the data
$f$ and $g$ and are usually straightforward to verify. For example, if
\textup{(H2)}, \textup{(H3)} hold and
\[
 g \in C_b(\BR^d),
\]
then conditions \textup{(ii)--(iv)} are satisfied. By contrast, condition \textup{(i)} is of a genuinely different nature. It
concerns the interplay between the geometry of the boundary $\partial D$ and
the path behaviour of the limiting process $X$, in particular the way in
which $X$ may approach, hit or jump across the boundary.
\end{remark}

We say that $D$ is of class  $C^0$ if for any   $z \in$ $\partial D$ 
there exist  an open set  $O_z\subset\mathbb R^d$,  an isometry  $T_z:\BR^d\to\BR^d$
and a continuous function  $F_z:\mathbb R^{d-1}\to\mathbb R$ such that
\[
z\in O_z,\quad T_z\left(D \cap O_z\right)=T_z\left(O_z\right) \cap\left\{\left(x^{\prime}, x_d\right): x_d<F_z\left(x^{\prime}\right)\right\} .
\]

In Section \ref{sec4} we shall show that if $D$ is a bounded open set of class $C^0$ and $X$ satisfies (ACP), then  condition \eqref{eq5.1} is satisfied. 
This 
together with Theorem \ref{th3.4}  gives the following easily applicable result.

\begin{corollary}
\label{cor5.3}
Let $u$ be a probabilistic solution of \eqref{eq1.1}  and $u_n$ be a probabilistic solution of  \eqref{eq1.1} with $L$ replaced by 
$L^n$ (both with empty exceptional sets). Assume that \mbox{\rm(H1)} holds and
\begin{enumerate}[\rm(a)]
\item $D$ is of class $C^0$ and \mbox{\rm(ACP)} is satisfied;
\item $f$ satisfies  \mbox{\rm(H2)--(H3)} and $g\in C_b(\BR^d)$.
\end{enumerate}
Then $u_n(x)\rightarrow u(x)$ for every $x\in D$.
\end{corollary}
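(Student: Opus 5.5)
The corollary will follow from Theorem \ref{th3.4} applied with $N=N_n=\emptyset$. Hypothesis (H1) is assumed, so the work is to check conditions (i)--(iv) of that theorem for every $x\in D$.

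Conditions (ii)--(iv) are routine, as noted in Remark \ref{rem5.2}. Condition (ii) is exactly assumption (b). For (iii), the boundedness of $g$ gives
\[
\int_{D^c}|g(z)|\,P_D(x,dz)\le \|g\|_\infty,\qquad \int_{D^c}|g(z)|\,P^n_D(x,dz)\le\|g\|_\infty .
\]
Here we use that $P_D(x,\cdot)$ and $P^n_D(x,\cdot)$ are sub-probability measures, and in fact probability measures, since $D$ is bounded and the processes are nontrivial, so the exit times are a.s. finite. For (iv), take $\varphi(r)=r^2$, which belongs to $\VV$; then
\[
\sup_n\int_{D^c}\varphi(|g|)\,dP^n_D(x,\cdot)\le\|g\|_\infty^2<\infty .
\]

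The substantive step is condition (i): we must show $\BP_x(\tau_{\bar D}=\tau_D)=1$ for every $x\in D$. The text right after Remark \ref{rem5.2} states that this is established in Section \ref{sec4} for bounded $C^0$ domains under (ACP). I would invoke that result directly. If I had to supply the argument myself, it would run as follows.

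Since $\tau_D\le\tau_{\bar D}$, it suffices to show that on $\{\tau_D<\infty\}$ the process leaves $\bar D$ immediately after $\tau_D$. There are two cases.

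\emph{Exit by a jump into $\bar D^c$.} Then $\tau_{\bar D}=\tau_D$ trivially.

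\emph{Exit landing on $\partial D$.} By Proposition \ref{prop4.3}, the process does not jump from a point of $\partial D$. So we must show that the process started from $X_{\tau_D}\in\partial D$ enters $\bar D^c$ immediately, i.e. every boundary point is regular for $\bar D^c$. Two ingredients would handle this:
\begin{enumerate}[(1)]
\item The strong Markov property at $\tau_D$ reduces the claim to regularity of $\partial D$ points for the open set $\bar D^c$.
\item The $C^0$ structure gives, near each $z\in\partial D$, the region $\{x_d>F_z(x')\}$ contained in $\bar D^c$.
\end{enumerate}
One then uses (ACP) together with a Blumenthal $0$--$1$ type argument. Under (ACP), $\BP_z(\text{hit }\{x_d=F_z(x')\}\text{ and stay in }\bar D)$ should vanish, since the graph of a continuous function has Lebesgue measure zero and so the time spent there is null.

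I expect this last step, regularity via the $C^0$ graph and (ACP), to be the main obstacle. The difficulty is that (ACP) controls occupation times rather than hitting behaviour. Because of this, an approach via the recurrent-translation structure of Lévy processes is likely needed, and that is why it is deferred to Section \ref{sec4}.

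Once (i)--(iv) are verified, Theorem \ref{th3.4} gives $u_n(x)\to u(x)$ for all $x\in D$.
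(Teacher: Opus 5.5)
Your proof is correct and coincides with the paper's: conditions (ii)--(iv) of Theorem~\ref{th3.4} follow from $g\in C_b(\BR^d)$ as in Remark~\ref{rem5.2}, condition (i) is Proposition~\ref{prop6.5}, and Theorem~\ref{th3.4} with empty exceptional sets then gives the conclusion. Do not rely on your optional sketch for (i): regularity of \emph{every} point of $\partial D$ for $(\bar D)^c$ can fail under (ACP), for instance for the space-time Brownian motion $(B_t,t)$ in $\BR^2$ at points of the bottom edge of a square (the process never exits there, but those points are irregular), which is why the paper argues instead via Lemma~\ref{lem6.3}, Proposition~\ref{prop4.3} and the a.e.-to-everywhere upgrade in Lemma~\ref{lem6.4}.
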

\begin{proof}
It follows from Remark \ref{rem5.2}, Proposition \ref{prop6.5} and Theorem \ref{th3.4}.
\end{proof}

\subsection{$L^p$ stability}

\begin{theorem}
\label{th3.10}
Let $p\ge 1$. Let $u$ be a probabilistic solution to \eqref{eq1.1}  and $u_n$ be a probabilistic solution to 
\eqref{eq1.1} with $L$ replaced by $L^n$. 
Suppose that \mbox{\rm(H1)} holds and:
\begin{enumerate}[\rm(a)]
\item there exists an $m$-inessential set $N\subset\BR^d$ such that     \mbox{\rm(\ref{eq5.1})} is satisfied for $x\in D\setminus N$;
\item $f$ satisfies  \mbox{\rm(H3)} and $g\in C(\BR^d)$;
\item $P_D[|g|^p],P_D^n[|g|^p]\in L^1(D)$ and $f(\cdot,y)\in L^1(D)$ for every $y\in\BR$;
\item $f-f(\cdot,0)\in C(\bar D\times\BR)$, $f(\cdot,0)\in L^p(D)$;
\item there exists $\varphi\in \mathcal V$ such that
$\sup_{n\ge 1} \int_D\int_{D^c}\varphi(|g(y)|^p)P^n_D(x,dy)\,dx<\infty$.
\end{enumerate}
 Then
$u_n\rightarrow u$ in $L^p(D)$.  
\end{theorem}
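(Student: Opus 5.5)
The plan is to combine pointwise convergence, obtained by a truncation of $f$ that brings us into the setting of Theorem \ref{th3.4}, with uniform integrability of $|u_n|^p$ on $D$ coming from the a priori estimate in Proposition \ref{prop2.4}. Vitali's theorem then gives $u_n\to u$ in $L^p(D)$. The difficulty is that $f(\cdot,0)$ is only in $L^p(D)$, not continuous, so Theorem \ref{th3.4} does not apply directly.

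\emph{Step 1 (approximation of $f(\cdot,0)$).} Write $f=f_0+\bar f$ with $f_0=f(\cdot,0)\in L^p(D)$ and $\bar f=f-f(\cdot,0)\in C(\bar D\times\BR)$ satisfying (H3). Choose $h_k\in C(\bar D)$ with $h_k\to f_0$ in $L^p(D)$ and in $L^1(D)$, and set $f_k=h_k+\bar f$, which satisfies (H2), (H3). Let $u^k,u^k_n$ be the probabilistic solutions with data $(f_k,g)$; they exist by Theorem \ref{th4.2}(iii), using (c). By Proposition \ref{prop2.4}(iii) (or (i) with $F(r)=r^p$ when $p=1$) and Jensen,
\[
|u_n(x)-u^k_n(x)|^p\le c_p\,\BE^n_x\Big(\int_0^{\tau_D}|f_0-h_k|(X_s)\,ds\Big)^p .
\]
I then need $\int_D \BE^n_x(\int_0^{\tau_D}|\eta|(X_s)ds)^p\,dx\le C\|\eta\|^p_{L^p(D)}$ uniformly in $n$. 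For $p=1$ this is $\int_D\int_D G^n_D(x,y)|\eta(y)|\,dy\,dx=\int_D|\eta(y)|\,\widehat{\BE}^n_y\tau_D\,dy$, by duality of Green functions (the dual of a Lévy process is $-X^{0,n}$). The dual exit times satisfy $\sup_n\sup_y\BE\tau^{n}_D<\infty$ for bounded $D$, since (H1) makes the processes leave a large ball uniformly. For $p>1$ I would use Khasminskii's iteration, $\BE_x(\int_0^\tau|\eta|)^p\le p!\,\|G^n_D\|^{p-1}\cdots$, combined with an interpolation between the $L^1\to L^1$ and $L^\infty\to L^\infty$ bounds of $G^n_D$ (both controlled by $\sup_n\sup\BE\tau^n_D$). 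This gives a uniform $L^p\to L^p$ bound for $\eta\mapsto \BE^n_\cdot(\int_0^\tau|\eta|)^p$ at least in the Riesz--Thorin sense. This is the step I expect to be the main obstacle; the same estimate also holds for the limit operator. It yields
\[
\lim_{k\to\infty}\sup_n\|u_n-u^k_n\|_{L^p(D)}+\|u-u^k\|_{L^p(D)}=0 .
\]

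\emph{Step 2 (pointwise convergence for fixed $k$).} For fixed $k$, Theorem \ref{th3.4} applies for a.e. $x\in D$. Condition (i) is assumption (a). Conditions (iii), (iv) hold for a.e. $x$ by (c), (e) and Fubini, after taking a subsequence if necessary, since (e) only bounds integrals in $x$. To handle this I would instead use Theorem \ref{th2.4} directly, which only needs uniform integrability of $g(X^{n}_{\tau_n})$; alternatively, I would truncate $g$ into $g_j=(g\wedge j)\vee(-j)$ and control the error $\int_D P^n_D[|g-g_j|^p]\,dx$ uniformly in $n$ via (e) and de la Vallée-Poussin. With both $f_k$ and $g_j$ bounded and continuous, Corollary \ref{cor5.3}'s argument, i.e.\ Theorem \ref{th3.4}, gives $u^{k,j}_n\to u^{k,j}$ pointwise on $D$ off an $m$-null set.

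\emph{Step 3 (conclusion).} For bounded data, $|u^{k,j}_n|\le j+\|h_k\|_\infty\sup_n\sup_x\BE^n_x\tau_D+\sup_{\bar D\times[-j,j]}|\bar f|$ uniformly. To justify this, I would use the comparison $|Y|\le$ the solution with $f$ replaced by $|f(\cdot,0)|$, which follows from (H3). Hence dominated convergence gives $u^{k,j}_n\to u^{k,j}$ in $L^p(D)$. The errors $u_n-u^{k,j}_n$ and $u-u^{k,j}$ are bounded in $L^p(D)$, uniformly in $n$, by
\[
c\Big(\int_D P^n_D\big[|g-g_j|^p\big]\,dx+\|f_0-h_k\|^p_{L^p(D)}\Big),
\]
again by Proposition \ref{prop2.4}(iii) and Step 1. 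Letting $n\to\infty$, then $j,k\to\infty$, finishes the proof.
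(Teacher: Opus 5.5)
Your architecture is the paper's. You approximate $f(\cdot,0)$ in $L^p(D)$ by continuous functions and truncate $g$. You get a.e. pointwise convergence for the bounded data from Theorem~\ref{th3.4}, whose condition (i) is hypothesis (a), and upgrade to $L^p(D)$ by dominated convergence. The approximation errors are controlled uniformly in $n$ through the dual exit-time bound $\sup_n\sup_{y}\int_D G^n_D(x,y)\,dx\le C$ (Proposition~\ref{prop4.2} applied to $-X^{0,n}$), together with (e) and de la Vall\'ee-Poussin. Using one index for both truncations, as the paper does, is immaterial.

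The real difference is the $L^p$ error estimate, and there your Step~1 is not proved. Passing through Proposition~\ref{prop2.4}(iii) forces you to bound $\int_D\BE^n_x\bigl(\int_0^{\tau_D}|\eta|(X_s)\,ds\bigr)^p\,dx$ by $C\|\eta\|^p_{L^p(D)}$ uniformly in $n$.
\begin{itemize}
\item Interpolating the $L^1$ and $L^\infty$ bounds of $G^n_D$ only controls $\|G^n_D|\eta|\|^p_{L^p(D)}$. By Jensen this is a \emph{lower} bound for the quantity you need.
\item The usual sup-norm form of Khasminskii's lemma needs $G^n_D|\eta|$ to be bounded, which fails for general $\eta\in L^p$.
\item What would work is Khasminskii's identity with H\"older's inequality at every stage for integer $p$. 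For non-integer $p$ you would then apply Riesz--Thorin to the \emph{linear} map $\eta\mapsto\int_0^{\tau_D}\eta(X_s)\,ds$ into $L^p(\BP^n_x(d\omega)\,dx)$.
\end{itemize}
You do not carry this out, and you flag it yourself as the main obstacle.

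The estimate is true and closes quickly by absorption. Put $A=\int_0^{\tau_D}|\eta|(X_s)\,ds$ and $m_r(y)=\BE^n_yA^r$. We have $A^p=p\int_0^{\tau_D}\bigl(\int_s^{\tau_D}|\eta|(X_r)\,dr\bigr)^{p-1}|\eta|(X_s)\,ds$, and the inner integral equals $A\circ\theta_s$ on $\{s<\tau_D\}$. Hence the Markov property gives
\[
m_p(x)=p\,\BE^n_x\int_0^{\tau_D}(|\eta|\,m_{p-1})(X_s)\,ds .
\]
Jensen gives $m_{p-1}\le m_p^{(p-1)/p}$. Integrating over $D$ and using the dual bound and H\"older yields
\[
\|m_p\|_{L^1(D)}\le pC\,\|\eta\|_{L^p(D)}\,\|m_p\|_{L^1(D)}^{(p-1)/p},
\]
that is, $\|m_p\|_{L^1(D)}\le (pC)^p\|\eta\|^p_{L^p(D)}$. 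This holds first for bounded $\eta$, where $m_p$ is bounded by Proposition~\ref{prop4.2} so the division is legitimate. It then extends to general $\eta$ by monotone convergence.

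The paper avoids moments of additive functionals altogether. It applies It\^o's formula to $|u(X)-u^k(X)|^p$ instead of Doob's inequality, so only $G_D\bigl(|u-u^k|^{p-1}|f(\cdot,0)-\gamma_k|\bigr)$ appears. It first checks that $u,u_n\in L^p(D)$. Young's inequality and absorption via $\int_D G_D(x,y)\,dx\le C$ then give \eqref{eq5.13}--\eqref{eq5.14} for every real $p\ge1$ in one stroke.
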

\begin{proof}
We denote by $N_n$ the exceptional set for $u_n$ and by  $N_0$ the exceptional set for $u$.
Since the union of $m$-inessential sets is an $m$-inessential set,
we may assume that $N_0=N$. 
By Theorem \ref{th4.2}, for any $x\in D\setminus N$, $u(X)$ is the first component of the solution of 
$\operatorname{BSDE}^{\tau_D}(g(X_{\tau_D}),f,X,\mathbb F,\BP_x)$,
and for any $x\in D\setminus N_n$, $u_n(X)$ is the first component of the solution of $\operatorname{BSDE}^{\tau_D}(g(X_{\tau_D}),f,X,\BF,\BP^n_x)$.
Next, notice that   thanks to (c), we have $u,u_n\in L^p(D)$.
Indeed, by Theorem \ref{th2.2},
\[
|u(x)|\le P_D|g|(x)+\int_D|f(y,0)|\,G_D(x,dy),\quad x\in D\setminus N,
\]
and by Fubini's theorem and Proposition \ref{prop4.2},
\[
\int_D\Big(\int_D|f(y,0)|\,G_D(x,dy)\Big)\,dx\le C\|f(\cdot,0)\|_{L^1(D)},
\]
which when combined with $P_D[|g|^p]\in L^1(D)$ shows that $u\in L^1(D)$ if $p=1$.
If $p>1$, then
by It\^o's formula (see, e.g., \cite[Theorem IV.66]{P}),
\begin{equation}
\label{eq5.4}
|u(x)|^p\le \BE_x|g(X_{\tau_D})|^p
+p\BE_x\int^{\tau_D}_0\operatorname{sgn}(u(X_t))|u(X_t)|^{p-1}f(X_t,u(X_t))\,dt.
\end{equation}
By using (H3) and  the elementary inequality 
\begin{equation}
\label{eq5.5}
ab\le \frac{(\varepsilon a)^{p/(p-1)}}{p/(p-1)}
+\frac{(\varepsilon^{-1}b)^{p}}{p},\quad a,b,\varepsilon>0,
\end{equation}
we show that the right-most term in \eqref{eq5.4} does not exceed
\begin{align}
\label{eq5.6}
&p\BE_x\int^{\tau_D}_0|u(X_t)|^{p-1}|f(X_t,0)|\,dt\nonumber\\
&\qquad\le  \BE_x\int^{\tau_D}_0
\big\{(p-1)\varepsilon^{p/(p-1)}|u(X_t)|^{p}
+\varepsilon^{-p}|f(X_t,0)|^p\big\}\,dt \nonumber\\
&\qquad=\int_D\big\{(p-1)\varepsilon^{p/(p-1)}|u(y)|^{p}
+\varepsilon^{-p}|f(y,0)|^p\big\}\,G_D(x,dy).
\end{align}
Taking now $\varepsilon=(2(p-1)C)^{-(p-1)/p}$, where $C$ is the constant from Proposition \ref{prop4.2}
applied to the dual operator $L^*$ and with  $q=1$, we have  $(p-1)\varepsilon^{p/(p-1)}=1/(2C)$. Therefore from 
Proposition \ref{prop4.2} and (\ref{eq5.4}) it follows that
\[
\|u\|^p_{L^p(D)}\le 2\|P_D|g|^p\|_{L^1(D)}+Cc(p,C)\|f(\cdot,0)\|^p_{L^p(D)},\quad 
c(p,C)=2(2(p-1)C)^{p-1}.
\]
In the same way we show that the above inequality is satisfied with $u$ replaced by $u_n$. To show the convergence of $u_n$ to $u$, we 
choose a sequence  $(\gamma_k)\subset C(\bar D)$ such that  $\gamma_k\rightarrow f(\cdot,0)$ in $L^p(D)$. 
For  $k\ge1$ define $f_k:D\times\BR\rightarrow\BR$ and $g_k:\mathbb R^d\to\mathbb R$ by 
\[
f_k(x,y)=f(x,y)-f(x,0)+\gamma_k(x),\quad g_k(x):=(g(x)\wedge k)\vee (-k). 
\]
By (c) and Remark \ref{rem3.2}, there exist  $m$-inessential sets $N',N'_n\subset D$ (with respect to $L$ and $L^n$, respectively)
such that (H4$_x$) holds for $x\in D\setminus N'$ and $\int_{D^c} |g(z)|\,P^n_D(x,dz)<\infty$ for $x\in D\setminus N'_n$. 
Clearly, we may assume that $N=N'$ and $N'_n=N_n$.
Notice that each  function $f_k$ satisfies (H2) and (H3). We denote by $u^k$   the solution of (\ref{eq1.1}) with $(f,g)$ replaced by $(f_k,g_k)$
and with an exceptional set $N$, 
while by $u^k_n$ we denote the solution of (\ref{eq1.1}) with $(f,g,L)$ replaced by $(f_k,g_k,L^n)$ and with an exceptional set $N_{k,n}$
(the existence follows from Theorem \ref{th4.2}).
By Theorem \ref{th3.4}, for a.e. $x\in D$,
\begin{align}
\label{eq5.7}
u^k_n(x)\rightarrow u^k(x)
\end{align}
as $n\rightarrow\infty$. Since $g_k,\gamma_k$ are bounded, it follows from Proposition \ref{prop2.4} and 
Proposition \ref{prop4.2} that $u^k$ is bounded and $u^k_n$
are bounded uniformly in $n\ge1$. Therefore (\ref{eq5.7}) implies that for each $k\ge1$,
\begin{equation}
\label{eq5.8}
\lim_{n\rightarrow\infty}\|u^k_n-u^k\|_{L^p(D)}=0
\end{equation}
for $p\ge1$. 
The rest of the proof is divided into two cases: $p=1$ and $p>1$. Let $p=1$. We shall show that
\begin{equation}
\label{eq5.9}
\lim_{k\rightarrow\infty}\|u^k-u\|_{L^1(D)}=0.
\end{equation}
By Proposition \ref{prop2.4},
\begin{align}
\label{eq5.10}
|u(x)-u^k(x)|\le c\Big(P_D|g-g_k|(x)+\int_D|f(y,0)-\gamma_k(y)|\,G_D(x,dy)\Big).
\end{align}
By this and  Proposition \ref{prop4.2},  
\[
\|u-u^k\|_{L^1(D)}\le C(\|P_D|g-g_k|\|_{L^1(D)}+\|f(\cdot,0)-\gamma_k\|_{L^1(D)}).
\]
This proves (\ref{eq5.9}). We next show that
\begin{equation}
\label{eq5.11}
\lim_{k\rightarrow\infty}\limsup_{n\rightarrow\infty}
\|u^k_n-u_n\|_{L^1(D)}=0.
\end{equation}
In much the same way, by using  Proposition \ref{prop2.4} and Proposition \ref{prop4.2}, we get 
\begin{align}
\label{eq5.12}
|u_n(x)-u_n^k(x)|\le c\Big(P^n_D|g-g_k|(x)
+\int_D|f(y,0)-\gamma_k(y)|\,G^n_D(x,dy)\Big),
\end{align}
and hence
\begin{align*}
\|u^k_n-u_n\|_{L^1(D)} 
\le C(\|P^n_D|g-g_k|\|_{L^1(D)}+ \|f(\cdot,0)-\gamma_k\|_{L^1(D)})
\end{align*}
for all  $n$, which  combined with (v) proves  (\ref{eq5.11}). From  (\ref{eq5.8})--(\ref{eq5.11}) we get the desired convergence in case  $p=1$. 

Suppose now that  $p>1$. Then by It\^o's formula (see, e.g., \cite[Theorem IV.66]{P}),
\begin{align*}
|u(x)-u^k(x)|^p&\le \BE_x|g(X_{\tau_D})-g_k(X_{\tau_D})|^p\\
&\quad+p\BE_x\int^{\tau_D}_0
\mbox{sgn}(u(X_t)-u^k(X_t))
|u(X_t)-u^k(X_t)|^{p-1}\\
&\qquad\qquad\qquad\qquad\qquad
\times(f(X_t,u(X_t))-f_k(X_t,u^k(X_t)))\,dt\\
&\le \BE_x|g(X_{\tau_D})-g_k(X_{\tau_D})|^p\\
&\quad+p\BE_x\int^{\tau_D}_0
|u(X_t)-u^k(X_t)|^{p-1}|f(X_t,u(X_t))-f_k(X_t,u(X_t))|\,dt.
\end{align*}
By using (\ref{eq5.5})  with $\varepsilon$ defined right after (\ref{eq5.6})  we get  
\begin{align*}
&|u(x)-u^k(x)|^p\le\BE_x|g(X_{\tau_D})-g_k(X_{\tau_D})|^p\\
&\quad\quad+  \BE_x\int^{\tau_D}_0
\Big\{\frac{1}{2C}|u(X_t)-u^k(X_t)|^{p}
+\frac{c(p,C)}{2}|f(X_t,u(X_t))-f_k(X_t,u(X_t))|^p\Big\}\,dt\\
&\quad=P_D|g-g_k|^p(x)+\int_D\Big\{\frac{1}{2C}|u(y)-u^k(y)|^{p}
+\frac{c(p,C)}{2}|f(y,0)-\gamma_k(y)|^p\Big\}\,G_D(x,dy).
\end{align*}
Integrating both sides of the above inequality with respect to $x$ we get
\begin{equation}
\label{eq5.13}
\|u-u^k\|^{p}_{L^p(D)}\le C c(p,C)\left(\|P_D|g-g_k|^p\|_{L^1(D)}+\|f(\cdot,0)-\gamma_k\|^p_{L^p(D)}\right).
\end{equation}
In the same way we get
\begin{equation}
\label{eq5.14}
\|u_n-u^k_n\|_{L^p(D)}^p \le  C c(p,C)\left(\|P^n_D|g-g_k|^p\|_{L^1(D)}+\|f(\cdot,0)-\gamma_k\|^p_{L^p(D)}\right).
\end{equation}
Combining  (\ref{eq5.8}) with  (\ref{eq5.13}), (\ref{eq5.14}) proves the theorem in case $p>1$.
\end{proof}

\begin{corollary}
\label{cor5.5}
Let $p\ge 1$. Let $u$ be a probabilistic solution of \eqref{eq1.1} and $u_n$ be a probabilistic solution of \eqref{eq1.1} with $L$ replaced by $L^n$. 
Suppose that \mbox{\rm(H1)} holds and
\begin{enumerate}[\rm(a)]
\item $D$ is of class $C^0$ and \mbox{\rm(ACP)} is satisfied;
\item $f$ satisfies  \mbox{\rm(H3)} and $g\in C_b(\mathbb R^d)$;
\item $f(\cdot,y)\in L^1(D)$ for every $y\in\BR$;
\item $f-f(\cdot,0)\in C(\bar D\times\mathbb R)$, $f(\cdot,0)\in L^p(D)$.
\end{enumerate}
 Then
$u_n\rightarrow u$ in $L^p(D)$.  
\end{corollary}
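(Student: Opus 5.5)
The plan is to deduce Corollary \ref{cor5.5} from Theorem \ref{th3.10} by verifying its hypotheses (a)--(e) from the assumptions (a)--(d) of the corollary, in the same way that Corollary \ref{cor5.3} was deduced from Theorem \ref{th3.4}.

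\emph{Hypothesis (a) of Theorem \ref{th3.10}.} Since $D$ is a bounded open set of class $C^0$ and (ACP) holds, Proposition \ref{prop6.5} of Section \ref{sec4} gives condition \eqref{eq5.1} for every $x\in D$. Hence Theorem \ref{th3.10}(a) holds with $N=\emptyset$.

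\emph{Hypotheses (b) and (d).} These are assumed verbatim in (b) and (d) of the corollary, since $g\in C_b(\BR^d)\subset C(\BR^d)$.

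\emph{Hypothesis (c).} Set $M=\|g\|_\infty$. Since $P_D(x,\cdot)$ and $P^n_D(x,\cdot)$ are sub-probability measures, $P_D[|g|^p]\le M^p$ and $P^n_D[|g|^p]\le M^p$ on $D$. Because $D$ is bounded, both functions lie in $L^1(D)$. The integrability $f(\cdot,y)\in L^1(D)$ is assumption (c) of the corollary.

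\emph{Hypothesis (e).} Take any $\varphi\in\VV$, for instance $\varphi(r)=r^2$. Then
\[
\sup_{n\ge1}\int_D\int_{D^c}\varphi(|g(y)|^p)\,P^n_D(x,dy)\,dx\le \varphi(M^p)\,m(D)<\infty.
\]

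With all hypotheses verified, Theorem \ref{th3.10} gives $u_n\to u$ in $L^p(D)$.

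The only nontrivial step is (a), and it is supplied by Proposition \ref{prop6.5}. I expect no other obstacle. One point to check is that the solutions $u$ and $u_n$ in the corollary come with $m$-inessential exceptional sets. This is built into the definition of a probabilistic solution, so Theorem \ref{th3.10} applies directly. The weakening of (H2) to (d) is already accounted for in Theorem \ref{th3.10}, which does not require (H2).
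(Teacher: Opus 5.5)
Your proposal is correct and matches the intended argument. The paper gives no separate proof of Corollary \ref{cor5.5}; it is meant as a direct consequence of Theorem \ref{th3.10}, with hypothesis (a) supplied by Proposition \ref{prop6.5} (so $N=\emptyset$) and hypotheses (c) and (e) following from $\|g\|_\infty<\infty$ and the boundedness of $D$, exactly as Corollary \ref{cor5.3} follows from Theorem \ref{th3.4} via Remark \ref{rem5.2} and Proposition \ref{prop6.5}.
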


\subsection{Pointwise stability under additional Green function control} 

Below  we shall show that with the same assumptions  as in the case of $L^p$-stability studied  in the 
previous subsection and under additional control on the Green functions related to the sequence of operators $(L^n)$
one can show pointwise convergence.

In this subsection, we assume additionally that there exists $q>1$ such that 
\begin{equation}
\label{eq5.15}
\sup_{x\in D}\big( \|G_D(x,\cdot)\|_{L^q(D)}+\sup_{n\ge 1} \|G^{n}_D(x,\cdot)\|_{L^q(D)}\big)<\infty.
\end{equation}
By  (\ref{eq5.15}) and H\"older's inequality,  for any $p\ge q'$ we have
\[
\BE^n_x\int^{\tau_D}_0|f(X_t,0)|\,dt=\int_DG^{n}_D(x,y)|f(y,0)|\,dy\le c\|f(\cdot,0)\|_{L^p(D)}.
\]

\begin{proposition}
Let $u$ be a probabilistic solution of \eqref{eq1.1} with an exceptional  set $N$ and $u_n$ be a probabilistic solution of 
\eqref{eq1.1} with $L$ replaced by $L^n$ with an exceptional set $N_n$. Set $N_\infty=\bigcup_{n} N_n$.
Assume that \mbox{\rm(H1)} and \eqref{eq5.15} are satisfied  and
\begin{enumerate}[\rm(a)]
\item $D$ is of class $C^0$ and \mbox{\rm(ACP)} is satisfied;
\item $f$ satisfies  \mbox{\rm(H3)} and $g\in C_b(\BR^d)$;
\item $f(\cdot,y)\in L^{q'}(D)$ for every
$y\in\BR$;
\item $f-f(\cdot,0)\in C(\bar D\times\mathbb R)$, and $f(\cdot,0)\in L^p(D)$ for some $p\ge q'$.
\end{enumerate}
Then $u_n(x)\rightarrow u(x)$ for every $x\in D\setminus (N\cup N_\infty)$.
\end{proposition}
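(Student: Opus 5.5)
The plan is to approximate $f$ by nonlinearities that satisfy (H2), apply the pointwise result (Theorem \ref{th3.4}, via Corollary \ref{cor5.3}) to the approximations, and control the approximation error uniformly in $n$ using \eqref{eq5.15}. Fix $x\in D\setminus(N\cup N_\infty)$. As in the proof of Theorem \ref{th3.10}, choose $\gamma_k\in C(\bar D)$ with $\gamma_k\to f(\cdot,0)$ in $L^p(D)$, and set
\[
f_k(y,r)=f(y,r)-f(y,0)+\gamma_k(y).
\]
Then $f_k$ satisfies (H2) and (H3), because $f-f(\cdot,0)\in C(\bar D\times\BR)$ and $f_k(y,\cdot)-f(y,\cdot)$ does not depend on $r$. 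Let $u^k$ and $u^k_n$ be the probabilistic solutions with data $(f_k,g)$ for $L$ and $L^n$; they exist by Theorem \ref{th4.2}(iii), since $g$ is bounded. Enlarging the exceptional sets by $m$-inessential sets if necessary, Corollary \ref{cor5.3} (or Theorem \ref{th3.4} together with Proposition \ref{prop6.5}) gives $u^k_n(x)\to u^k(x)$ as $n\to\infty$ for each fixed $k$.

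Next I estimate the approximation error pointwise with Proposition \ref{prop2.4}. Both solutions share the terminal condition $g(X_{\tau_D})$, and $f-f_k=f(\cdot,0)-\gamma_k$ does not depend on the solution variable. Applying Proposition \ref{prop2.4}(iii) (or (i) with $F(u)=u$) at time $0$, then using the Green function representation \eqref{eq3.3} and H\"older's inequality, gives
\[
|u(x)-u^k(x)|\le c\int_D|f(y,0)-\gamma_k(y)|\,G_D(x,y)\,dy\le c\,\|G_D(x,\cdot)\|_{L^q(D)}\,\|f(\cdot,0)-\gamma_k\|_{L^{q'}(D)}.
\]
The same argument under $\BP^n_x$ gives
\[
|u_n(x)-u^k_n(x)|\le c\,\sup_{m}\|G^m_D(x,\cdot)\|_{L^q(D)}\,\|f(\cdot,0)-\gamma_k\|_{L^{q'}(D)}.
\]
Since $D$ is bounded and $p\ge q'$, we have $\|\cdot\|_{L^{q'}(D)}\le c\|\cdot\|_{L^p(D)}$, so both bounds tend to $0$ as $k\to\infty$, uniformly in $n$, by \eqref{eq5.15}. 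The triangle inequality
\[
|u_n(x)-u(x)|\le |u_n(x)-u^k_n(x)|+|u^k_n(x)-u^k(x)|+|u^k(x)-u(x)|
\]
then yields $\limsup_n|u_n(x)-u(x)|\le \varepsilon_k\to0$, which is the claim.

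The main obstacle is the bookkeeping needed to apply Proposition \ref{prop2.4} at the specific point $x$. The integrability hypotheses must hold at $x$: $\BE_x\int_0^{\tau_D}|f(X_t,r)|\,dt<\infty$ for every $r$, and likewise under $\BP^n_x$. These follow from (c) combined with \eqref{eq5.15} and H\"older's inequality; this is exactly why (c) is stated in $L^{q'}$ rather than $L^1$. With these in hand, $(u(X),\cdot)$ and $(u^k(X),\cdot)$ solve BSDEs of the form required by Theorem \ref{th2.2} at every $x$ outside the exceptional sets, not merely for a.e. $x$. A second, related point is that the pointwise convergence $u^k_n(x)\to u^k(x)$ must hold at our particular $x$. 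I would therefore invoke Theorem \ref{th3.4} directly, with condition (i) supplied by Proposition \ref{prop6.5}, and take the exceptional sets of $u^k,u^k_n$ to be contained in $N\cup N_\infty$. This is possible because for bounded $g$ and continuous $\gamma_k$ the BSDEs are solvable from every starting point where $\BE_x\tau_D<\infty$, which holds everywhere by \eqref{eq5.15}.
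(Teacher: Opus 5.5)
Your proposal is correct and follows essentially the same route as the paper: the same approximations $f_k=f-f(\cdot,0)+\gamma_k$, Theorem \ref{th3.4} (with Proposition \ref{prop6.5}) for $u^k_n(x)\to u^k(x)$, and the uniform-in-$n$ bounds \eqref{eq5.16}--\eqref{eq5.17} obtained from Proposition \ref{prop2.4}, H\"older's inequality and \eqref{eq5.15}. One small correction: part (iii) of Proposition \ref{prop2.4} at time $0$ only bounds $|u(x)-u^k(x)|$ by a higher $\BP_x$-moment of $\int_0^{\tau_D}|f(X_s,0)-\gamma_k(X_s)|\,ds$, so to reach the Green-function expression you should use part (ii), or \eqref{eq2.1} with $\sigma=0$, or (i) with $F(u)=u$ combined with the representation \eqref{eq4.1} (which is the alternative you list, and it does work).
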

\begin{proof}
For $k\ge1$ we define  $f_k$ as in the proof of Theorem \ref{th3.10}  
and as in the proof of that theorem we define the functions  $u^k$, $u^k_n$. 
By Remark \ref{rem5.2} and Propositions \ref{prop4.3}, \ref{prop6.5}, condition (i) of Theorem \ref{th3.4} holds.
Furthermore,  applying H\"older's inequality
and using (\ref{eq5.10}), (\ref{eq5.15}) shows that  for any $x\in D\setminus N$ and $k\ge \|g\|_\infty$,
\begin{equation}
\label{eq5.16}
|u(x)-u^k(x)|\le c\|f(\cdot,0)-\gamma_k\|_{L^p(D)}.
\end{equation}
Similarly,  using  (\ref{eq5.12}), (\ref{eq5.15}) we obtain that for every $x\in D\setminus N_\infty$, 
\begin{equation}
\label{eq5.17}
|u_n(x)-u^k_n(x)|\le c\|f(\cdot,0)-\gamma_k\|_{L^p(D)}.
\end{equation}
In our case by Theorem \ref{th3.4}, the convergence \eqref{eq5.7} holds for every $x\in D\setminus(N\cup N_\infty)$.
Combining (\ref{eq5.7}) with (\ref{eq5.16}) and (\ref{eq5.17}) shows the desired convergence.
\end{proof}

\section{Geometric condition (\ref{eq5.1})}
\label{sec4}

The problem of checking condition (\ref{eq5.1}) for processes associated with general operators of the form \eqref{eq1.2} is  rather subtle.
Our aim is to show 	that  (\ref{eq5.1}) is satisfied for L\'evy processes satisfying condition (ACP) in the case when $D$ 
is a bounded open set of class $C^0$.

\begin{lemma}
\label{lem6.3}
Let $D\subset\BR^d$ be an open bounded set such that $m(\partial D)=0$ and
$X$ be a L\'evy process in $\BR^d$ satisfying condition \mbox{\rm(ACP)}. Then, for every $x\in\BR^d\setminus \partial D$,
\begin{equation}
\label{eq6.2}
\BP_x(\exists s>0:\ X_{s-}\in \BR^d\setminus \partial D,\ X_s\in \partial D)=0.
\end{equation}
\end{lemma}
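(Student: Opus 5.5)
We split the event in \eqref{eq6.2} according to whether the entrance into $\partial D$ happens by a jump or continuously. Let
\[
A_1=\{\exists s>0:\ X_{s-}\notin\partial D,\ X_s\in\partial D,\ \Delta X_s\neq0\},\qquad
A_2=\{\exists s>0:\ X_{s-}\notin\partial D,\ X_s\in\partial D,\ \Delta X_s=0\}.
\]
On $A_2$ we have $X_s=X_{s-}\notin\partial D$, so $A_2=\emptyset$. It therefore suffices to show $\BP_x(A_1)=0$. In other words, the process never jumps into the boundary.

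For $A_1$ we use the L\'evy system formula \eqref{eq3.5}. For $k\ge1$ and $T>0$, apply it with the nonnegative Borel function
\[
f_k(y,z)=\fch_{\partial D}(z)\,\fch_{\{|z-y|>1/k\}},
\]
which vanishes on the diagonal, and with the stopping time $T$. This gives
\[
\BE_x\sum_{s\le T}\fch_{\partial D}(X_s)\fch_{\{|\Delta X_s|>1/k\}}
=\BE_x\int_0^T\int_{\BR^d}\fch_{\partial D}(X_s+z)\fch_{\{|z|>1/k\}}\,\nu(dz)\,ds .
\]
By Fubini, the right-hand side equals
\[
\int_{\{|z|>1/k\}}\BE_x\int_0^T\fch_{\partial D-z}(X_s)\,ds\,\nu(dz)
\le e^{T}\int_{\{|z|>1/k\}}R_1(x,\partial D-z)\,\nu(dz).
\]
Since $m(\partial D)=0$ and Lebesgue measure is translation invariant, $m(\partial D-z)=0$ for every $z$. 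Condition (ACP) then gives $R_1(x,\partial D-z)=0$. The integral vanishes, and it is finite because $\nu(\{|z|>1/k\})<\infty$.

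Hence, a.s., there is no $s\le T$ with $|\Delta X_s|>1/k$ and $X_s\in\partial D$. Taking the union over $k,T\in\BN$ shows that a.s. there is no $s>0$ with $\Delta X_s\ne0$ and $X_s\in\partial D$. Thus $\BP_x(A_1)=0$, and in fact this holds for every $x\in\BR^d$.

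The hypothesis $x\notin\partial D$ is needed only to exclude the trivial situation at time $0$. With the convention $X_{0-}=X_0=x$ no issue arises, since the statement quantifies over $s>0$ only. The main point to check is the measurability and applicability of \eqref{eq3.5}. The function $f_k$ is Borel because $\partial D$ is closed, and \eqref{eq3.5} is stated for any stopping time, so the deterministic time $T$ is allowed. The case $\nu=0$ is trivial, since then $X$ has no jumps and $A_1=\emptyset$.
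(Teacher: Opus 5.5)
Your proof is correct and follows essentially the same route as the paper: the L\'evy system formula \eqref{eq3.5}, translation invariance of Lebesgue measure giving $m(\partial D-z)=0$ for every $z$, and then (ACP). The differences are cosmetic. You truncate to $\{|z|>1/k\}$ and $[0,T]$ and apply (ACP) to each translate $\partial D-z$, which also avoids the implicit passage from $R_\alpha$ to the $0$-potential. The paper instead applies the formula on $[0,\infty)$ with $\fch_{A^c}(y)\fch_A(z)$ and uses Tonelli to show that $h(y)=\fch_{A^c}(y)\,\nu(A-y)$ vanishes a.e. Your version also gives the slightly stronger conclusion that $X$ a.s.\ never jumps into $\partial D$, from any starting point.
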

\begin{proof}
Set $A=\partial D$ and define  $h(y)=\fch_{A^c}(y)\,\nu(A-y)$, $y\in\BR^d$,
\[
N_A=\sum_{s>0}\fch_{\{X_{s-}\notin A,\ X_s\in A\}}.
\]
By using the L\'evy system (\ref{eq6.1}) for $X$ and (\ref{eq3.5}) we obtain 
\begin{equation}
\label{eq6.3}
\BE_xN_A
=\BE_x\int_0^\infty \mathbf{1}_{\{X_t\notin A\}}\,\nu(A-X_t)\,dt
=\BE_x\int_0^\infty h(X_t)\,dt.
\end{equation}
By Tonelli's theorem,
\[
\int_{\BR^d} h(y)\,dy=\int_{\BR^d}
\Big(\int_{\BR^d}\fch_{A^c}(y)\fch_{A}(y+z)\,dy\Big)\nu(dz).
\]
For each fixed $z\in\BR^d\setminus\{0\}$ we have
\[
\int_{\BR^d}\fch_{A^c}(y)\fch_{A}(y+z)\,dy\le m(A-z)=0.
\]
Hence  $h=0$ a.e. in $\BR^d$. Since (ACP) is satisfied, it follows from this and (\ref{eq6.3}) that $\BE_x N_A=0$, which proves \eqref{eq6.2}. 
\end{proof}

\begin{lemma}
\label{lem6.4}
Let $X$ be a L\'evy process in $\BR^d$ satisfying condition \mbox{\rm(ACP)} and 
$U$ be an  open subset of $\BR^d$ of the  form
\[	
U=T(V)\cap B(z,r),
\]
where $T:\BR^d\to \BR^d$ is an isometry, $V$ is a hypograph of the form 
$V=\{(x,y): y<f(x)\}$   for  some $f\in C(\BR^{d-1})$
and  $B(z,r)=\{x\in\BR^d:|x-z|<r\}$ for some $z\in\BR^d$, $r>0$.
Then 
\[
\BP(\tau^x_U=\tau^x_{\bar U})=1,\quad x\in U,
\]
where $\tau^x_U=\inf\{t>0:X^x_t\notin U\}$, 
$\tau^x_{\bar U}=\inf\{t>0:X^x_t\notin\bar U\}$.
\end{lemma}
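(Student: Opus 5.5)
Since $\tau^x_U\le\tau^x_{\bar U}$ always, it suffices to show $\BP(\tau^x_U<\tau^x_{\bar U})=0$. By the strong Markov property at $\tau_U$, on $\{\tau_U<\infty\}$ the path sits at $X_{\tau_U}\in U^c$. If $X_{\tau_U}\notin\bar U$, then $\tau_{\bar U}=\tau_U$ immediately. So we only need to control the event $\{X_{\tau_U}\in\partial U\}$, and show that on it the process leaves $\bar U$ at once. Note $m(\partial U)=0$, because $\partial U$ is contained in the union of the rotated graph $T(\{y=f(x)\})$ and the sphere $\partial B(z,r)$, both Lebesgue null.

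\emph{Step 1 (arrival at the boundary).} On $\{X_{\tau_U}\in\partial U\}$, either $X_{\tau_U-}\ne X_{\tau_U}$, or the process creeps, i.e. $X_{\tau_U-}=X_{\tau_U}\in\partial U$. The first alternative means the path jumps from $\BR^d\setminus\partial U$ onto $\partial U$, since $X_{\tau_U-}\in\bar U$ and a jump from a boundary point can be treated via $X_{\tau_U-}\in U$. By Lemma~\ref{lem6.3} applied to $U$ (valid since $m(\partial U)=0$ and $x\in U$), this has probability zero. Hence a.s. $X_{\tau_U}=X_{\tau_U-}\in\partial U$, and in particular the exit is continuous at that moment.

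\emph{Step 2 (immediate exit from $\bar U$).} By the strong Markov property it suffices to show that for every $w\in\partial U$, $\BP_w(\tau_{\bar U}=0)=1$, i.e. $w$ is regular for the open set $\bar U^c$. A weaker statement also suffices: this holds for $\BP_x\circ X_{\tau_U}^{-1}$-a.e. $w$. Here I use the geometry. Near $w$ on the graph part, $\bar U^c$ contains the open epigraph $T(\{y>f(x)\})\cap B(w,\delta)$. Near $w$ on the sphere part, $\bar U^c$ contains the exterior of the ball. At points of $\partial U$, $\bar U^c$ always contains an open set having $w$ in its closure, with "thick" complement structure. I would try to use the fact that the exit distribution of $U$ restricted to $\partial U$ is reached by creeping, and exploit a 0--1 law (Blumenthal) together with translation invariance. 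Vertical translation gives $\BP_w(X$ enters $T(\{y>f(x)\})+ \varepsilon e$ quickly$)$, and the hypograph structure is monotone in the vertical direction. Concretely, let $E=\{w:\BP_w(\tau_{\bar U}>0)=1\}$, the set of irregular boundary points. By Blumenthal, $\BP_w(\tau_{\bar U}=0)\in\{0,1\}$. On $E$ the process started at $w$ stays in $\bar U$ for a while. Since it starts at a point of $\partial U$, and by (ACP) the time spent in the null set $\partial U$ is zero, it must immediately enter $U$. I then argue that the set of points of $\partial U$ hit by creeping from $U$ and irregular for $\bar U^c$ is hit with probability zero. This would go through the fact that irregular points of a set form a semipolar set, and semipolar sets are not hit continuously in a way that matters. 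More precisely, $X$ under (ACP) does not hit semipolar sets except at countably many times, and those hits at the exit time $\tau_U$ can be excluded using the hypograph monotonicity: shifting $w$ slightly upward along $T(e_d)$ lands in the interior of $\bar U^c$.

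\emph{Main obstacle.} Step 2 is the hard part, namely excluding creeping onto irregular points of $\partial U$ for an arbitrary Lévy process with only (ACP) and only continuous $f$. My first attempt would be the semipolar-set argument: the set of points of $\partial U$ irregular for $\bar U^c$ but regular for $U^c$ is semipolar. Under (ACP), Hunt's hypothesis-type results for the killed process then give that $X_{\tau_U}$ lies in it with probability zero. If that fails, I would fall back on a direct argument via the hypograph: compare $\tau_{\bar U}$ with exit times of the shifted sets $U-\varepsilon T(e_d)$, which decrease to $\bar U$ as $\varepsilon\downarrow0$, and use quasi-left-continuity.
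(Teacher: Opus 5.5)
Your Step~1 is correct: it is Lemma~\ref{lem6.3} plus the trivial fact that the path cannot jump off $\partial U$ before $\tau_U$. But it does not bring you closer to the lemma. By the strong Markov property at $\tau_U$,
$$\BP_x(\tau_U<\tau_{\bar U})=\BE_x\bigl[\BP_{X_{\tau_U}}(\tau_{\bar U}>0);\,X_{\tau_U}\in\partial U\bigr],$$
so Step~2 (``the exit law charges no point of $\partial U$ that is irregular for $\bar U^c$'') is just a reformulation of the statement, and you give no argument for it.

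\emph{The pointwise version is false.} For $X_t=t$ on $\BR$ and $U=(-1,1)$, the point $-1$ is irregular for $\bar U^c$. For Brownian motion in $\BR^3$, a continuous $f$ whose epigraph is a Lebesgue thorn at $w$ makes $w$ irregular. So the fact that $\bar U^c$ contains an open set adjacent to $w$ can never suffice; some averaging is indispensable.

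\emph{The semipolar route does not supply this averaging.} The standard result is that $B\setminus B^r$ is semipolar for the target set $B$. Your irregular points lie outside $B=\bar U^c$, so semipolarity of your set is unjustified. Even for semipolar sets, (ACP) does not imply Hunt's hypothesis~(H), so the exit position may lie in one with full probability. For $X_t=t$, (ACP) holds, $\{1\}$ is semipolar and non-polar, and $X_{\tau_{(-1,1)}}=1$ a.s. The ``exclusion via hypograph monotonicity'' is never actually argued.

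\emph{The fallback also falls short, even for a pure hypograph.} (For $T(V)\cap B(z,r)$ the vertical translates are not even nested.) With $e=T(e_d)$, quasi-left-continuity gives $\tau_{U-\varepsilon e}\uparrow\tau_U$, and right-continuity gives $\tau_{U+\varepsilon e}\downarrow\tau_{\bar U}$. Whether the nondecreasing map $s\mapsto\tau_{U+se}$ jumps at $s=0$ is exactly the question, and neither property addresses it.

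\emph{The missing idea is to average over the shift parameter and then over the starting point; this is what the paper does.}
\begin{enumerate}
\item For a hypograph and $s<s'$, one has $\tau_{U+se}\le\tau_{\overline{U+se}}\le\tau_{U+s'e}$. So for each path, $s\mapsto\tau_{U+se}$ has at most countably many jumps, and $\tau_{U+se}=\tau_{\overline{U+se}}$ off them. By Fubini this holds a.s.\ for a.e.\ $s$.
\item Translation invariance, $\tau^x_{U+se}=\tau^{x-se}_U$, together with another Fubini step, gives $\BP(\tau^x_U=\tau^x_{\bar U})=1$ for $m$-a.e.\ $x\in U$.
\item Only now does (ACP) enter. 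The functions $x\mapsto\BE e^{-\tau^x_U}$ and $x\mapsto\BE e^{-\tau^x_{\bar U}}$ are $1$-excessive for the killed semigroup $(P^U_t)$. Under (ACP), excessive functions that agree $m$-a.e.\ agree everywhere, which yields the claim for every $x\in U$.
\item The ball is handled by observing that $\tau_V=\tau_{\bar V}$ and $\tau_W=\tau_{\bar W}$ imply $\tau_{V\cap W}=\tau_{\overline{V\cap W}}$, and that a ball is a finite intersection of rotated continuous hypographs. This also disposes of the points where the graph meets the sphere, which your direct treatment of $U$ leaves open.
\end{enumerate}
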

\begin{proof}
First observe that for any open sets $V, W$ such that $\tau^x_V=\tau^x_{\bar V}$ for $x\in V$ and $\tau^x_W=\tau^x_{\bar W}$ for $x\in W$ we have 
\begin{equation}
\label{eq6.4}
\tau^x_{V\cap W}=\tau^x_{\overline{V\cap W}},\quad x\in V\cap W.
\end{equation}
Indeed, $\overline{V\cap W}\subset \bar V\cap\bar W$, which implies that
\[
\tau^x_{V\cap W}\le \tau^x_{\overline{V\cap W}}\le
\tau^x_{\bar V\cap\bar W}=\tau^x_{\bar V}\wedge \tau^x_{\bar W}
=\tau^x_V\wedge \tau^x_W=\tau^x_{V\cap W},\quad x\in V\cap W.
\]
Since for any L\'evy process $X$ satisfying (ACP) and any isometry $T$, the process $Y=T(X)$ is again a L\'evy process satisfying (ACP), 
and since each open ball is the intersection of a finite number of rotated open hypographs of continuous functions, it follows from (\ref{eq6.4}) 
that we are reduced to proving that $\tau^x_U=\tau^x_{\bar U}$ for every $x\in U$, where $U$ is an open hypograph of a continuous function 
$f:\BR^{d-1}\to \BR$. Let $U$ be of this form.
Let $\vec  r=(0,\dots,0,r)\in \BR^d$. Note that 
\begin{equation}
\label{eq6.5}
\tau^x_{U}=\tau^{x-\vec r}_{U-\vec r},\qquad \tau^x_{\bar U}
=\tau^{x-\vec r}_{\bar U-\vec r}\,,\quad x\in U.
\end{equation}
From the proof of \cite[Proposition (2.5)]{D} it follows that  there exists $r>0$ such that  $\tau^{y}_{U-\vec r}
=\tau^{y}_{\bar U-\vec r}$ for a.e. $y\in U-\vec r$.
This together with (\ref{eq6.5})  yields  
$\tau^x_{U}= \tau^x_{\bar U}$ for a.e. $x\in U$, so the functions $w,\bar w$  defined by $w(x)=\BE e^{-\tau^x_{U}}$, $\bar w(x)=\BE e^{-\tau^x_{\bar U}}$ 
are equal a.e. in $U$. The functions  $w,\bar w$ are $1$-excessive with respect to  the semigroup $(P^U_t)$  (see, e.g., \cite[Proposition II.(2.8)]{BG}).
Since (ACP)  is satisfied, in fact
$w(x)=\bar w(x)$ for every  $x\in U$ by \cite[Proposition VI.(1.3)]{BG}. Thus $\BE(e^{-\tau^x_{U}}-e^{-\tau^x_{\bar U}})=0$ for $x\in U$, 
which proves the lemma since $\tau^x_{\bar U}\ge\tau^x_U\,$a.s.
\end{proof}

\begin{proposition}
\label{prop6.5}
Let $D$ be a bounded open subset of $\BR^d$ of class $C^0$.
If  $X$ is a L\'evy process satisfying condition \mbox{\rm(ACP)},  then \eqref{eq5.1} is satisfied.
\end{proposition}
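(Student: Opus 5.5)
Fix $x\in D$. Since $\tau^x_D\le\tau^x_{\bar D}$ always holds, it suffices to show that $\BP(\tau^x_D<\tau^x_{\bar D})=0$. On the event $\{\tau^x_D<\tau^x_{\bar D}\}$ the exit time $\tau:=\tau^x_D$ is finite, because $D$ is bounded and $X$ is nontrivial. Moreover $X_\tau\in\bar D\setminus D=\partial D$. I would split this event according to the behaviour of the path at $\tau$, using the two preceding lemmas.

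\emph{Case 1: $\Delta X_\tau\neq0$.} Since $X_{\tau-}\in\bar D$, either $X_{\tau-}\in\partial D$ or $X_{\tau-}\in D$. The first possibility has probability zero by Proposition~\ref{prop4.3}. In the second, the path jumps from $\BR^d\setminus\partial D$ into $\partial D$. A $C^0$ domain satisfies $m(\partial D)=0$, since the boundary is locally a graph of a continuous function. Hence Lemma~\ref{lem6.3} shows that this also has probability zero.

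\emph{Case 2: $X_{\tau-}=X_\tau=z\in\partial D$.} This is the main case, and here I localize with Lemma~\ref{lem6.4}. By compactness of $\partial D$ and the $C^0$ property, I cover $\partial D$ by finitely many balls $B(z_i,r_i)$, $i=1,\dots,k$, such that
\[
U_i:=D\cap B(z_i,2r_i)=T_i(V_i)\cap B(z_i,2r_i),
\]
with $V_i$ an open hypograph of a continuous function. Shrinking the radii if necessary ensures that $B(z_i,2r_i)\subset O_{z_i}$. By Lemma~\ref{lem6.4}, for every $y\in U_i$ we have $\BP(\tau^y_{U_i}=\tau^y_{\bar U_i})=1$.

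Fix a rational time $q$. On the event $\{q<\tau,\ X_q\in U_i\cap B(z_i,r_i)\}$, I apply the strong Markov property at time $q$. Consider the post-$q$ exit times from $U_i$ and $\bar U_i$. If the path leaves $D$ at $\tau$ while staying in $B(z_i,2r_i)$ near $\tau$, then the exit from $U_i$ occurs at $\tau$. Now $\tau<\tau_{\bar D}$, and near $\tau$ the path remains in $B(z_i,2r_i)$, so $\bar D\cap B(z_i,2r_i)\subset\bar U_i$. It follows that the exit from $\bar U_i$ happens strictly later, which is a null event by Lemma~\ref{lem6.4} combined with the Markov property.

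Because $X_{\tau-}=z$ and the path is c\`adl\`ag, there exist $i$ with $z\in B(z_i,r_i)$ and a rational $q<\tau$ such that $X_s\in B(z_i,2r_i)$ for $s\in[q,\tau]$. Some care is needed here: $\bar U_i$ is the closure within the ball, and after $\tau$ the path must also stay in $B(z_i,2r_i)$ for a short time. Right-continuity at $\tau$, together with $z\in B(z_i,r_i)$, gives this.

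Taking a countable union over $i$ and $q$ then yields $\BP(\tau^x_D<\tau^x_{\bar D},\ \Delta X_\tau=0)=0$. I expect the main obstacle to be this localization step: matching $\tau_{\bar D}$ with $\tau_{\bar U_i}$ requires that the path stays inside the chart ball on a two-sided neighbourhood of $\tau$. In addition, the points of $\partial U_i$ lying on the sphere $\partial B(z_i,2r_i)$ must not interfere, and the balls are chosen precisely to handle this.
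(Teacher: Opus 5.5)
Your proof is correct and follows the paper's argument: the same reduction, via Proposition~\ref{prop4.3} and Lemma~\ref{lem6.3}, to exits through $\partial D$ with $X_{\tau-}=X_\tau$, then localization to finitely many chart sets where Lemma~\ref{lem6.4} applies. The only difference is bookkeeping: you apply the simple Markov property at rational times $q$ and take a countable union, whereas the paper uses the strong Markov property at recursively defined entrance times $\sigma_{k,n}$ into $D\cap b_k$, paired with exit times $\eta_{k,n}$ from a larger ball. Your version is slightly more elementary, and your check that $m(\partial D)=0$ for $C^0$ domains supplies a hypothesis of Lemma~\ref{lem6.3} that the paper uses without comment.
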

\begin{proof}
We  let $\tau=\tau_{D}$ and $\bar\tau=\tau_{\bar D}$. Fix $x\in D$. Since  $\BP_x(\tau\le \bar\tau)=1$, it suffices to prove that 
\begin{equation}
\label{eq6.6}
\mathbb P_x(\tau<\bar\tau)=0.
\end{equation}
By Lemma \ref{lem6.3},
\begin{equation}
\label{eq6.7}
\BP_x(\exists{s>0}:X_{s-}\in D,\, X_s \in \partial D)=0.
\end{equation}
Since  $\tau$ is the first exit time from $D$, $X_\tau\notin D$. Hence $X_\tau\in\partial D$ on the set $\{\tau<\bar\tau\}$. From this,  \eqref{eq6.7} and 
Proposition \ref{prop4.3} we get 
\begin{equation}
\label{eq6.8}
\BP_x(\tau<\bar\tau)=\BP_x(A),\quad\mbox{where}\quad A=\{\tau<\bar\tau,\,  X_{\tau-}=X_\tau,\, X_\tau\in \partial D\}.
\end{equation}
Let $\RR$ denote the family of sets $T(V)$, where
$T:\BR^d\to\BR^d$ is an isometry and
$V=\{(x',x_d):x_d<F(x')\}$ for some
$F\in C(\BR^{d-1})$.
For every $y\in \partial D$ there exist $r_y>0$ and $V_y\in\RR$ such that
\[
D\cap B(y,3r_y)=V_y\cap B(y,3r_y).
\]	
Since $\partial D$ is compact, there exists a  finite set  
$\{y_1,\dots,y_N\}\subset \partial D$  such that the  balls $B(y_k,r_{y_k})$ cover $\partial D$. To shorten notation, set 
\[
b_k=B(y_k,r_{y_k}),\quad B_k= B(y_k,2r_{y_k}),\quad V_k= V_{y_k},\quad  U_k= V_k\cap B(y_k,3r_{y_k}).
\]
For each $k\in\{1,\dots,N\}$  define recursively the entrance times $\sigma_{k,n}$  and exit times $\eta_{k,n}$ by 
\[
\sigma_{k,1}=\inf\{t>0:X_t\in D\cap b_k\},\qquad
\eta_{k,1}=\inf\{t>\sigma_{k,1}:X_t\notin B_k\},
\]
and, for $n\ge1$,
\[
\sigma_{k,n+1}=\inf\{t>\eta_{k,n}:X_t\in D\cap b_k\},\qquad
\eta_{k,n+1}=\inf\{t>\sigma_{k,n+1}:X_t\notin B_k\}.
\]
On the set $A$  we have $X_\tau\in \partial D$, so $X_\tau\in b_k$ for some $k\ge1$. Consequently,
\[	
A\subset \bigcup_{k=1}^N\bigcup_{n=1}^{\infty}\{X_\tau\in b_k,\,\sigma_{k,n}<\tau<\eta_{k,n}\}.
\]
Observe now that 
\[
A\cap\{X_\tau\in b_k,\ \sigma_{k,n}<\tau<\eta_{k,n}\}\subset
\{\tau_{U_k}\circ\theta_{\sigma_{k,n}}<
\tau_{\bar U_k}\circ\theta_{\sigma_{k,n}}\},
\]
and
\[
\{\sigma_{k,n}<\tau\}
\subset
\{X_{\sigma_{k,n}}\in U_k\}.
\]
Therefore, by Lemma \ref{lem6.4} and the
strong Markov property at $\sigma_{k,n}$,
\[
\BP_x(A\cap\{X_\tau\in b_k,\,\sigma_{k,n}<\tau<\eta_{k,n}\})
\le \BE_x\Big(\fch_{\{\sigma_{k,n}<\tau\}}\BP_{X_{\sigma_{k,n}}}\left(\tau_{U_k}<\tau_{\bar U_k}\right)\Big)
=0.
\]
Summing over $k,n$ we get  $\BP_x(A)=0$, which together with \eqref{eq6.8} proves \eqref{eq6.6}.
\end{proof}

\section{Applications}
\label{sec7}

In this section, we first give examples of operators whose symbols $\psi_n,\psi$, i.e. characteristic exponents 
of the corresponding L\'evy processes, satisfy (H1). In the second part, as an application of our results 
for linear Dirichlet problems, we show a result on the Mosco convergence of parts on $D$ of the  Dirichlet forms associated with $\psi_n,\psi$.
 
\subsection{Convergence of solutions of semilinear equations}

In what follows,  we assume that
 $u$ is a probabilistic solution to \eqref{eq1.1}  and $u_n$ is a probabilistic solution to 
\eqref{eq1.1} with $L$ replaced by $L^n$.
Furthermore, we assume that $D$ is of class $C^0$,
$f,g$ satisfy  \mbox{\rm(H2)--(H3)} and  $g\in C_b(\mathbb R^d)$. Accordingly we may and will consider in this section 
only canonical solutions to \eqref{eq1.1} with $N=\emptyset$.

\begin{example}
\label{ex7.1}
(Fractional  Laplace operators). Let $L^n=-(-\Delta)^{\alpha_n/2}$, $L=-(-\Delta)^{\alpha/2}$ with $\alpha_n,\alpha\in(0,2]$. Then   
$\psi_n(\xi)=|\xi|^{\alpha_n}$, $\psi(\xi)=|\xi|^{\alpha}$.
If $\alpha_n\rightarrow\alpha$, then from Corollary \ref{cor5.3}  it follows that $u_n\rightarrow u$ pointwise in $D$. 
In the case $\alpha=2$ the operator associated with $\psi$ is the Laplace operator $\Delta$. 
\end{example}

\begin{example}
(Multifractal operators). Let $L^n=-\sum^d_{i=1}(-\frac{\partial^2}{\partial x_i^2})^{\alpha^i_n/2}$ 
with $\alpha^i_n\in(0,2)$, $i=1,\dots,d$.  Then  $\psi_n(\xi)=|\xi_1|^{\alpha^1_n}+\ldots+|\xi_d|^{\alpha^d_n}$.
The process with symbol $\psi_n$ has the form $X^{x,n}=(x_1+X^{1,n},\dots,x_d+X^{d,n})$, 
where $X^{1,n},\dots, X^{d,n}$ are one-dimensional, independent, symmetric  $\alpha^i_n$-stable
processes.   If $\alpha^i_n\rightarrow2$ for $i=1,\dots,d$, then  $\psi_n(\xi)\rightarrow\psi(\xi)=|\xi|^2$. 
By Corollary \ref{cor5.3},   $u_n\rightarrow u$, where $u$ is the solution of (\ref{eq1.1}) with $L=\Delta$.
\end{example}

\begin{example}
(Mixed local and  fractional  operators).
Let $L^n=b_n\cdot\nabla+a_n\Delta-(-\Delta)^{\alpha_n/2}-(-\Delta)^{\beta_n/2}$ with $b_n\in\BR^d$, $a_n>0$ and $\alpha_n,\beta_n\in(0,2)$. \\
(i) If $b_n\rightarrow b$, $a_n\rightarrow a$ and  $\alpha_n\rightarrow\alpha$, $\beta_n\rightarrow \beta$ for some $b\in\BR^d$, $a\ge0$ 
and $\alpha,\beta\in(0,2)$, then by  Corollary \ref{cor5.3},  $u_n\rightarrow u$ pointwise in $D$, where $u$ is the probabilistic solution of problem (\ref{eq1.1}) with 
$L=b\cdot\nabla +a\Delta-(-\Delta)^{\alpha/2}-(-\Delta)^{\beta/2}$. \\
(ii) If $b_n\rightarrow0$, $a_n\rightarrow1$, $\alpha_n\rightarrow2$, $\beta_n\rightarrow2$, then $u_n\rightarrow u$, where 
$u$ is the solution of (\ref{eq1.1}) with $L=3\Delta$.  
\end{example}

\begin{example}
(Relativistic fractional operators). Consider the relativistic fractional Laplacian
\[
L^n=-\big(-\Delta+(m_n)^{2/\alpha_n}\big)^{\alpha_n/2}+m_n,
\qquad
\alpha_n\in(0,2),\quad m_n>0,
\]
with its L\'evy--Khintchine symbol
\[
\psi_n(\xi)=((m_n)^{2/\alpha_n}+|\xi|^2)^{\alpha_n/2}-m_n,\quad \xi\in\BR^d.
\]
The L\'evy process with the characteristic exponent $\psi_n$ is called the relativistic $\alpha_n$-stable process. 
An explicit expression for its L\'evy measure  can be found in \cite{CKS}.
\\
(i) If $\alpha_n\rightarrow2$  and $m_n\rightarrow m>0$, then by Corollary \ref{cor5.3}, $u_n\rightarrow u$ pointwise in $D$, 
where $u$ is the solution of (\ref{eq1.1}) with $L=\Delta$.\\
(ii) If $\alpha_n\rightarrow\alpha$ for some $\alpha\in(0,2)$  and $m_n\rightarrow0$, then $u_n\rightarrow u$, where $u$ 
is the solution of (\ref{eq1.1}) with $L=-(-\Delta)^{\alpha/2}$.  
\end{example}

\begin{example}(Fractional logarithmic Schr\"odinger operators). 	
Let $\psi_n(\xi)=\ln(1+|\xi|^{\alpha_n})$  with $\alpha_n\in(0,2)$ and $\psi(\xi)=\ln(1+|\xi|^2)$. The L\'evy process $X^n$ 
with characteristic exponent $\psi_n$ is called a geometric $\alpha_n$-stable process. The process $X$ corresponding to $\psi$ 
is often called the variance gamma process. The operators with symbols $\psi_n,\psi$ are denoted by  $L^n=-\ln(1+(-\Delta)^{\alpha_n/2})$ 
and $L=-\ln(1-\Delta)$, respectively. $L^n$ is called the fractional logarithmic Schr\"odinger operator, while $L$  is the logarithmic Schr\"odinger operator.  
From the Chung--Fuchs criterion (\ref{eq4.4}) it follows that $X$ is transient for $d\ge3$. The exponent $\psi$ does not satisfy the Hartman--Wintner 
condition (\ref{eq1.9}). It is known, however,  that $X$ satisfies condition (ACT) (see \cite{SSV} or the recent preprint by K.~Tsuchida, \emph{A note on geometric $\alpha$-stable processes and the existence of ground states for associated Schr\"odinger operators} (arXiv:2603.11570, 2026)). If $\alpha_n\rightarrow2$, 
then clearly (H1) is satisfied, so the solutions $u_n$ of (\ref{eq1.1})  corresponding to $L^n$ converge pointwise in $D$  to the solution $u$ of (\ref{eq1.1}) 
corresponding to $L$. 
\end{example}

The linear Dirichlet problem involving the fractional logarithmic Schrödinger operator has been studied from the PDE perspective in \cite{F}.

\begin{example} 
\label{ex5.7}	For $n\ge1$ let $\lambda_n>0$ and $\nu_n$ be a symmetric probability measure on $\BR^d$ such that $\nu_n(\{0\})=0$. Set  
\[
\psi_n(\xi)=|\xi|^2+\lambda_n\int_{\BR^d}(1-\cos(\xi,y))\,\nu_n(dy).
\]
The L\'evy process with the characteristic exponent $\psi_n$ is the sum of a $d$-dimensional Wiener process  with covariance matrix 
$2I_d$  and an independent compound Poisson process with L\'evy measure $\lambda_n\nu_n$. The operator corresponding to $\psi_n$ has the form
\[
L^n\varphi(x)=\Delta\varphi(x)+\lambda_n\int_{\BR^d}(\varphi(x+y)-\varphi(x))\,\nu_n(dy),\quad \varphi\in C^2_c(\BR^d)
\]
(see, e.g., \cite[Example 31.8]{Sa}). If $\lambda_n\rightarrow1$ and the sequence $\{\nu_n\}$ converges weakly to the Dirac 
measure $\delta_{0}$, then $\psi_n(\xi)\rightarrow|\xi|^2$ for $\xi\in\BR^d$. Therefore  Corollary \ref{cor5.3}  applies to solutions $u_n$ corresponding to $L^n$ and $u$ corresponding to $\Delta$.
\end{example}

In the next examples we consider stable  operators, i.e. operators being generators of $\alpha$-stable L\'evy  processes. 
Regularity properties of solutions of the Dirichlet problem involving operators of such kind are studied in \cite{R-OS,R-OV} in the 
symmetric case and in \cite{DR-OSV} in the general case. For convenience of exposition, below we recall some  facts on stable processes and their generators.

By \cite[Theorem 14.3]{Sa}, if $X$ is an $\alpha$-stable, then $Q=0$ and its L\'evy measure is of the form 
\begin{equation}
\label{eq7.1}
\nu(B)=\int_{S^{d-1}}\mu(dy)\int^{\infty}_0\fch_{B}(ry)r^{-1-\alpha}\,dr,\quad B\in\BB(\BR^d)
\end{equation}
for some finite measure $\mu$ on $S^{d-1}=\partial B(0,1)$ called the spherical part of the measure $\nu$.  We know that the restriction of the 
generator $L$ of $X$ to $C^2_c(\BR^d)$ is given  by (\ref{eq1.2})  but  for $\alpha$-stable  process both $\psi$  and (\ref{eq1.2}) have  
simpler forms. We give them  below for symmetric measures $\mu$ and $\alpha\in(0,1)$ or $\alpha\in(1,2)$. For the general case 
we refer the reader to \cite[Remark 14.6, Theorem 14.7]{Sa} for a description of $\psi$ and to  \cite{DR-OSV} for a simplified form of (\ref{eq1.2}). 

If $\mu$ is symmetric, i.e. $\mu(B)=\mu(-B)$ for any Borel subset $B$ of $S^{d-1}$, then $\nu$ is also symmetric and by \cite[Exercise E 18.8]{Sa}, the 
characteristic exponent (\ref{eq1.4}) (with $Q=0$) of the $\alpha$-stable L\'evy process with the L\'evy measure (\ref{eq7.1}) has the form
\begin{equation}
\label{eq7.2}
\psi(\xi)
=c_{\alpha}\int_{S^{d-1}}|(\xi,y)|^{\alpha}\,\mu(dy),
\end{equation}
where 
\[
c_{\alpha}=\sqrt{\pi}\frac{\Gamma((2-\alpha)/2)}{2^{\alpha}\alpha\Gamma((1+\alpha)/2)}.
\]
Moreover, for $\alpha\in(0,1)$,
\begin{equation}
\label{eq7.3}
L\varphi(x)=\int_{S^{d-1}}\mu(d\xi)\int^{\infty}_0
(\varphi(x+r\xi)-\varphi(x))r^{-1-\alpha}\,dr,\quad \varphi\in C^2_c(\BR^d).
\end{equation}
Indeed, if $\alpha\in(0,1)$, then by \cite[Proposition 14.5]{Sa}, $\int_{\BR^d}|y|\,\fch_{\{|y|\le 1\}}\,\nu(dy)<\infty$. Therefore, by \cite[(14.7)]{Sa},
\[
\int_{\BR^d}y_i\fch_{\{|y|\le1\}}\,\nu(dy)
=\int_{S^{d-1}}\mu(d\xi)\int^1_0r\xi_ir^{-1-\alpha}\,dr
=\frac{1}{1-\alpha}\int_{S^{d-1}}\xi_i\mu(d\xi)=0,
\]
which together with (\ref{eq1.2}) gives (\ref{eq7.3}). 
If $\alpha\in(1,2)$, then  $\int_{\BR^d}|y|\,\fch_{\{|y|>1\}}\,\nu(dy)<\infty$  by \cite[Proposition 14.5]{Sa}. 
Using this and symmetry of $\mu$ we obtain  
\begin{equation}
\label{eq7.4}
L\varphi(x)=\frac12\int_{S^{d-1}}\mu(d\xi)\int^{\infty}_0
(\varphi(x+r\xi)+\varphi(x-r\xi)-2\varphi(x))
r^{-1-\alpha}\,dr
\end{equation}
for $\varphi\in C^2_c(\BR^d)$.

If furthermore the spherical part of $\nu$ is the uniform probability distribution on $S^{d-1}$, i.e. is equal to $\sigma/\omega_{d-1}$, 
where $\sigma$ is the surface measure on $S^{d-1}$ and $\omega_{d-1}=\sigma(S^{d-1})=2\pi^{d/2}/\Gamma(d/2)$, then by \cite[Exercise E 18.9]{Sa},
\begin{equation}
\label{eq7.5}
\psi (\xi)=C_{\alpha}|\xi|^{\alpha}\quad\mbox{with}\quad \frac{1}{C_{\alpha}}=\frac{2^{\alpha}\alpha\Gamma((\alpha+d)/2)}
{\Gamma(d/2)\Gamma((2-\alpha)/2)}.
\end{equation}
Let
\begin{equation}
\label{eq7.6}
c_{d,\alpha}=\frac{1}{C_{\alpha}\omega_{d-1}}=\frac{2^{\alpha-1}\alpha\Gamma((d+\alpha)/2)}{\pi^{d/2}\Gamma((2-\alpha)/2)}.
\end{equation}
From the above it follows that if the spherical part $\mu$  equals $c_{d,\alpha}\sigma$, then 
$\psi(\xi)=|\xi|^{\alpha}$, i.e. the corresponding operator is $-(-\Delta)^{\alpha/2}$ 
(see also \cite[Example 32.7]{Sa}). 

\begin{example}(Anisotropic fractional operators I). Let $\alpha\in(0,2)$ and for $n\ge1$ let $\mu_n$ be a nontrivial symmetric  
finite measure on $S^{d-1}$. Define  a L\'evy  measure $\nu_n$ by
\[
\nu_n(B)=\int_{S^{d-1}}\mu_n(dy)\int^{\infty}_0\fch_{B}(ry)r^{-1-\alpha}\,dr,\quad B\in\BB(\BR^d),
\]
and then $\psi_n$ by (\ref{eq7.2}) with $\mu$ replaced by $\mu_n$. The function $\psi_n$ is the characteristic exponent of the L\'evy process with 
the generating triplet $(0,\nu_n,0)$.  The stable operator $L^n$ with symbol $\psi_n$  is sometimes called the anisotropic fractional operator.
Suppose that $\{\mu_n\}$  converges weakly to $c_{d,\alpha}\sigma$. For instance, we may consider $\mu_n=h_n\sigma$ with even positive 
functions  $h_n$ such that  $h_n\rightarrow c_{d,\alpha}$ in $L^1(S^{d-1};\sigma)$.
Then (H1) is satisfied, so by Corollary \ref{cor5.3} the solutions $u_n$ of (\ref{eq1.1})  corresponding to $L^n$ converge pointwise in $D$  
to the solution $u$ of (\ref{eq1.1}) with $L=-(-\Delta)^{\alpha/2}$.
\end{example}

\begin{example}
(Anisotropic fractional operators II). 	
For $n\ge1$ let $\alpha_n\in(1,2)$ and $\mu_n=c_{d,\alpha_n}h_n\cdot\sigma$, where $c_{d,\alpha_n}$ is defined by (\ref{eq7.6}) and 
$h_n\in L^1(S^{d-1};\sigma)$ is even and positive. Assume that $\alpha_n\rightarrow 2$ and $h_n\rightarrow 1$ in $L^1(S^{d-1};\sigma)$ 
as $n\rightarrow\infty$. By (\ref{eq7.2}),
\[
\psi_n(\xi)=c_{\alpha_n}c_{d,\alpha_n}
\int_{S^{d-1}}|(\xi,y)|^{\alpha_n}\,h_n(y)\,\sigma(dy).
\]
The corresponding operator $L^n$ can be written in the form (\ref{eq7.4}) with $\mu$ replaced by $\mu_n$ and $\alpha$ 
replaced by $\alpha_n$. By the definitions of the constants $c_{\alpha_n}$, $c_{d,\alpha_n}$ and (\ref{eq7.5}), 
\[
c_{\alpha_n}\cdot c_{d,\alpha_n}
\int_{S^{d-1}}|(\xi,y)|^{\alpha_n}\,\sigma(dy)
=|\xi|^{\alpha_n}\rightarrow|\xi|^2.
\]
Since the sequence $\{c_{\alpha_n}\cdot c_{d,\alpha_n}\}$ converges to a finite constant, 
\[
c_{\alpha_n}c_{d,\alpha_n}
\int_{S^{d-1}}|(\xi,y)|^{\alpha_n}|1-h_n(y)|\,\sigma(dy)\le C_\xi
\|1-h_n\|_{L^1(S^{d-1};\sigma)}
\longrightarrow0.
\]
Thus $\psi_n(\xi)\rightarrow|\xi|^2$ for $\xi\in\BR^d$,  so Corollary \ref{cor5.3} applies to operators $L^n$ and $L=\Delta$. 
If $h_n=1$ for $n\ge1$, then the above result reduces to the one given in Example \ref{ex7.1}.
\end{example}

We next give an example of nonsymmetric operators for which (H1) is satisfied. We will use 
\cite[Theorem 8.7]{Sa}. This result is a general tool which allows expressing the pointwise convergence $\psi_n\rightarrow\psi$  
in terms of the generating triplets of the corresponding processes.

\begin{example}(Nonsymmetric stable-like operators). Let $K, K_n:\BR^d\rightarrow\BR$ be  nonnegative functions such that 
$1/C\le K_n(y)\le C$, $1/C\le K(y)\le C$ for some $C>0$ and  $K_n\rightarrow K$ a.e.  as $n\rightarrow\infty$. Let $\alpha \in (0,2)$, and 
\[
\nu_n(dy)=K_n(y)|y|^{-d-\alpha}\,dy,\qquad	\nu(dy)=K(y)|y|^{-d-\alpha}\,dy,
\]
and $\psi_n,\psi$ denote the characteristic exponents of L\'evy processes with generating triplets $(0,\nu_n,0)$ and $(0,\nu,0)$, respectively. Then
\[
I_{n,\varepsilon}(\xi):=\int_{\{|y|<\varepsilon\}}|(\xi,y)|^2\,\nu_n(dy)\le C|\xi|^2\int_{\{|y|<\varepsilon\}}|y|^{2-d-\alpha}\,dy=C'|\xi|^2\int^{\varepsilon}_0r^{1-\alpha}\,dr.
\]
Hence $\lim_{\varepsilon\rightarrow0^+}\limsup_{n\rightarrow\infty}I_{n,\varepsilon}(\xi)=0$ for $\xi\in\BR^d$.
Furthermore, one can easily check that $\int_{\BR^d}f(y)\,\nu_n(dy)\rightarrow\int_{\BR^d}f(y)\,\nu(dy)$ for every bounded continuous 
function $f:\BR^d\rightarrow\BR$ vanishing on a neighborhood of 0 and 
that $\int_{\BR^d}y(1/(1+|y|^2)-\fch_{\{|y|<1\}})\,\nu_n(dy)\rightarrow
\int_{\BR^d}y(1/(1+|y|^2)-\fch_{\{|y|<1\}})\,\nu(dy)$ as $n\rightarrow\infty$.  Therefore, by  \cite[Theorem 8.7]{Sa}, 
$\psi_n(\xi)\rightarrow\psi(\xi)$ for every $\xi\in\BR^d$, so Corollary \ref{cor5.3} applies.
\end{example}

\subsection{Mosco convergence of parts of forms}

Suppose that (H1) is satisfied. Throughout this subsection, we 
assume that $\psi_n$ and $\psi$ are real-valued.
For $n\ge1$ define the Dirichlet form $(\Psi^{(n)},H^{\psi_n})$ on $L^2(\BR^d)$ by (\ref{eq3.1}), (\ref{eq3.2}) with $\psi$ 
replaced by $\psi_n$.   In this section, as a byproduct of our results proved in Sections \ref{sec3}--\ref{sec4},  we show the convergence, 
in the sense of Mosco, of parts of these forms on $D$ to the part  of $(\Psi,H^{\psi})$ on $D$. 
In the convergence result, we assume that $D$ satisfies the assumptions of Proposition \ref{prop6.5} and $X$ satisfies condition (ACP). We also give an 
example showing that in general the convergence does not hold for an arbitrary bounded open set $D$.

Although our result concerns a special class of forms, it is convenient to start with the general definition of Mosco convergence. 

\begin{definition}
Let $(\EE,\FF)$, $(\EE^{(n)},\FF^{(n)})$, $n\ge1$,
be closed quadratic forms on $H:=L^2(E;m)$, where $E$ is a separable metric space and $m$ is a $\sigma$-finite measure on it. We let 
\[
\EE(u,u)=\infty \quad \text{if } u\notin \FF,\qquad \EE^{(n)}(u_n,u_n)=\infty \quad \text{if } u_n\notin \FF^{(n)}.
\]	
We say that $(\EE^{(n)},\FF^{(n)})$ converges to $(\EE,\FF)$ in the sense of Mosco if
\begin{enumerate}	
\item[(M1)] For every sequence $\{u_n\}\subset H$ such that
$u_n\rightharpoonup u$ weakly in $H$ we have
\[
\liminf_{n\to\infty} \EE^{(n)}(u_n,u_n)\ge \EE(u,u);
\]		
\item[(M2)] For every $u\in H$ there is a sequence $\{u_n\}\subset H$ such that
$u_n \to u$ strongly in $H$ and
\[
\limsup_{n\to\infty}\EE^{(n)}(u_n,u_n)\le \EE(u,u).
\]
\end{enumerate}
\end{definition}

Let $\{G_{\alpha}\}_{\alpha>0}$, $\{G^{(n)}_{\alpha}\}_{\alpha>0}$ denote the resolvents  of $(\EE,\FF)$ and  $(\EE^{(n)},\FF^{(n)})$, 
respectively (see \cite{M} or \cite[Section 1.3]{FOT}). 

\begin{proposition}
\label{prop7.11}
Let $(\EE,\FF)$, $(\EE^{(n)},\FF^{(n)})$, $n\ge1$, be closed quadratic forms on $H$. Then  $(\EE^{(n)},\FF^{(n)})$ converges to $(\EE,\FF)$ in the sense of 
Mosco if and only if for any $\alpha>0$ and $f\in H$, $G^{(n)}_{\alpha}f$ converges to $G_{\alpha}f$ in $H$.	
\end{proposition}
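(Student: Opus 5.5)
The approach is the standard variational characterization of resolvents as minimizers. For $\alpha>0$ and $f\in H$, the resolvent $G_\alpha f$ is the unique minimizer over $H$ of the strictly convex functional
\[
J_\alpha(u)=\EE(u,u)+\alpha\|u\|_H^2-2(f,u)_H ,
\]
with the convention $\EE(u,u)=\infty$ for $u\notin\FF$. Indeed, $\EE_\alpha(G_\alpha f,v)=(f,v)_H$ for $v\in\FF$, so that $J_\alpha(u)=\EE_\alpha(u-G_\alpha f,u-G_\alpha f)-\EE_\alpha(G_\alpha f,G_\alpha f)$. The analogous statement holds for $J^{(n)}_\alpha$ and $G^{(n)}_\alpha f$.

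\emph{Mosco convergence implies resolvent convergence.} Set $u_n=G^{(n)}_\alpha f$.
\begin{enumerate}[(1)]
\item Testing the identity $\EE^{(n)}_\alpha(u_n,u_n)=(f,u_n)_H$ gives $\alpha\|u_n\|^2\le\|f\|\|u_n\|$, so $(u_n)$ is bounded in $H$.
\item Take any weakly convergent subsequence, $u_{n'}\rightharpoonup w$. Fix $v\in H$ and let $v_n\to v$ strongly be a recovery sequence from (M2). Minimality gives $J^{(n)}_\alpha(u_n)\le J^{(n)}_\alpha(v_n)$.
\item By (M1), weak lower semicontinuity of the norm, and weak convergence in the linear term, $J_\alpha(w)\le\liminf J^{(n')}_\alpha(u_{n'})$. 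Using (M2) and strong convergence of $v_n$, $\limsup J^{(n)}_\alpha(v_n)\le J_\alpha(v)$.
\item Hence $J_\alpha(w)\le J_\alpha(v)$ for every $v$, so $w=G_\alpha f$. Thus the whole sequence converges weakly to $G_\alpha f$.
\item For strong convergence, take $v=G_\alpha f$ in step 2. This gives $\limsup J^{(n)}_\alpha(u_n)\le J_\alpha(G_\alpha f)$. Now $J^{(n)}_\alpha(u_n)=-(f,u_n)_H\to-(f,G_\alpha f)_H=J_\alpha(G_\alpha f)$. Splitting $J^{(n)}_\alpha$ into its three terms and using (M1) for the energy term together with weak lower semicontinuity of the norm, we get $\limsup\|u_n\|^2\le\|G_\alpha f\|^2$. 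Weak convergence plus convergence of norms gives strong convergence.
\end{enumerate}

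\emph{Resolvent convergence implies Mosco convergence.} Here I would use the approximating forms $\EE^{(\beta)}(u,u)=\beta(u-\beta G_\beta u,u)_H$. Spectral calculus shows these increase to $\EE(u,u)$ as $\beta\to\infty$, with value $\infty$ off $\FF$. The same holds for $\EE^{(n),(\beta)}$.

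For (M2), it suffices to treat $u\in\FF$, and by density of $G_\alpha H$ in $(\FF,\EE_1)$ one may take $u=G_1 h$ and use a diagonal argument. Put $u_n=G^{(n)}_1h$. Then $u_n\to u$ strongly, and
\[
\EE^{(n)}(u_n,u_n)=(h,u_n)_H-\|u_n\|^2\to(h,u)_H-\|u\|^2=\EE(u,u).
\]
For (M1), let $u_n\rightharpoonup u$. For fixed $\beta$,
\[
\EE^{(n)}(u_n,u_n)\ge\EE^{(n),(\beta)}(u_n,u_n)=\beta\|u_n\|^2-\beta^2(G^{(n)}_\beta u_n,u_n)_H .
\]
The key inequality is the liminf of this quantity along weakly convergent $u_n$, which is the main obstacle: $G^{(n)}_\beta u_n$ need not converge because $u_n$ converges only weakly. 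I would handle it via the quadratic-form identity
\[
\EE^{(n),(\beta)}(u_n,u_n)=\sup_{g\in H}\bigl[2\beta(u_n,g)_H-\beta\,\EE^{(n)}_\beta(G^{(n)}_\beta g,G^{(n)}_\beta g)\bigr]\cdot(\text{suitably normalized}),
\]
that is, by writing $\EE^{(\beta)}(u,u)$ as a supremum of expressions that are affine in $u$ and involve only $G^{(n)}_\beta g$ for fixed $g$. Each such expression converges under weak convergence of $u_n$ and strong convergence of $G^{(n)}_\beta g$. This yields $\liminf\EE^{(n)}(u_n,u_n)\ge\EE^{(\beta)}(u,u)$, and letting $\beta\to\infty$ gives (M1). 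Getting the exact normalization of this Legendre-type representation right, using the symmetry of $G_\beta$, is the step I would verify most carefully. Alternatively, this direction can simply be cited from Mosco's theorem \cite{M}.
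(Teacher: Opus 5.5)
Your proposal is correct. It reconstructs the cited theorem rather than following a proof in the paper: the paper gives no argument and simply invokes Mosco's Theorem 2.4.1 (see also Chen--Song, Proposition 2.1).

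\textbf{The parts that are complete.} The forward direction is complete: boundedness of $G^{(n)}_\alpha f$, identification of every weak limit as the minimizer of $J_\alpha$, and strong convergence from $\limsup\|G^{(n)}_\alpha f\|\le\|G_\alpha f\|$. Applying (M1) along the subsequence $n'$ is legitimate. Pad the subsequence with $w$ to obtain a full sequence converging weakly to $w$; its $\liminf$ is at most the subsequential one. The proof of (M2) via $\EE_1$-density of $G_1H$ in $\FF$ and a diagonal choice is also fine.

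\textbf{The normalization in (M1).} The step you flagged does need correcting. Since $\EE^{(n)}_\beta(G^{(n)}_\beta g,G^{(n)}_\beta g)=(G^{(n)}_\beta g,g)_H$, the supremum you display equals $\beta\,\EE^{(n)}_\beta(u_n,u_n)$, i.e. $\beta$ times the form plus $\beta\|u_n\|^2$. This is not a rescaling of $\EE^{(n),(\beta)}(u_n,u_n)$.

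What you want is the polarization inequality for the bounded nonnegative symmetric form $\EE^{(n),(\beta)}$. Using symmetry of $G^{(n)}_\beta$ to move the resolvent onto $g$, it reads
\[
\EE^{(n)}(u_n,u_n)\ge\EE^{(n),(\beta)}(u_n,u_n)\ge 2\beta\bigl(u_n,g-\beta G^{(n)}_\beta g\bigr)_H-\beta\bigl(g-\beta G^{(n)}_\beta g,g\bigr)_H,\qquad g\in H .
\]
Pass to the limit using weak convergence of $u_n$ and strong convergence of $G^{(n)}_\beta g$. This gives $\liminf_n\EE^{(n)}(u_n,u_n)\ge 2\EE^{(\beta)}(u,g)-\EE^{(\beta)}(g,g)$ for every $g\in H$. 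Take $g=u$, then let $\beta\to\infty$, and (M1) follows.

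\textbf{An alternative using your formula as written.} For every $u\in H$,
\[
\EE_\beta(u,u)=\sup_{g\in H}\bigl[2(u,g)_H-(G_\beta g,g)_H\bigr].
\]
The value is $\infty$ off $\FF$: if the supremum were finite, Riesz' theorem on $(\FF,\EE_\beta)$ would put $u$ in $\FF$. With $M=\sup_n\|u_n\|$, this gives $\liminf_n\EE^{(n)}(u_n,u_n)\ge\EE(u,u)+\beta(\|u\|^2-M^2)$, and you finish by letting $\beta\downarrow0$. This variant avoids the approximating forms and spectral calculus altogether.

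\textbf{What each route buys.} The paper's citation is shorter. It also covers Mosco's full setting (closed forms that need not be densely defined, plus the semigroup characterization). Your route is self-contained and shows that only weak-times-strong convergence of inner products is needed.
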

\begin{proof}
This is a special case of   \cite[Theorem 2.4.1]{M}; see also \cite[Proposition 2.1]{CS}.
\end{proof}
In what follows we confine ourselves to the case where $(\EE,\FF)=(\Psi, H^{\psi})$ and  $(\EE^{(n)},\FF^{(n)})=(\Psi^{(n)}, H^{\psi_n})$, $n\ge1$. 
Recall that $(\Psi, H^{\psi})$, $(\Psi^{(n)}, H^{\psi_n})$ are symmetric Dirichlet forms on $L^2(\BR^d)=L^2(\BR^d;m)$, where $m$ is the 
Lebesgue measure on $\BR^d$. By definition, the parts of these forms on $D$ are  $(\Psi, H^{\psi}_0(D))$ and  $(\Psi^{(n)}, H^{\psi_n}_0(D))$, respectively.

It is known (see \cite[Lemma 3.2]{CS}) that if (H1) is satisfied, then the forms $(\Psi^{(n)}, H^{\psi_n})$ converge as $n\rightarrow\infty$ to $(\Psi, H^{\psi})$ 
in the sense of Mosco. The question is  whether (H1) also implies the convergence of parts of these forms on $D$.  A positive answer is given in 
Theorem \ref{th7.12} below. For applications of this theorem  we refer the reader to \cite{CS}.

We denote by
$\{G^{D}_{\alpha}\}_{\alpha>0}$, $\{G^{D,n}_{\alpha}\}_{\alpha>0}$ the resolvents on $L^2(D):=L^2(D;m)$ of the forms
$(\Psi, H^{\psi}_0(D))$, $(\Psi^{(n)}, H^{\psi_n}_0(D))$, and for any $\alpha>0$ and bounded Borel $f:D\rightarrow\BR$ we set 
\[
R^D_{\alpha}f(x)=\BE_x\int^{\tau}_0e^{-\alpha t}f(X_t)\,dt,\qquad R^{D,n}_{\alpha}f(x)=\BE_x\int^{\tau_n}_0e^{-\alpha t}f(X^n_t)\,dt
\]
with the convention that $f(X_t)=0$ if $X_t\notin D$. By \cite[Theorem 4.2.4]{FOT},  $R^D_{\alpha}f=G^D_{\alpha}f$ $m$-a.e. 
and $R^{D,n}_{\alpha}f=G^{D,n}_{\alpha}f$ $m$-a.e. for any $\alpha>0$ and bounded Borel $f\in L^2(D)$.

\begin{theorem}
\label{th7.12}
Let $D$ be an open bounded subset of $\BR^d$  of class $C^0$.
If \mbox{\rm(H1)} is satisfied and 
$X$ is a L\'evy process satisfying condition \mbox{\rm(ACP)}, then	
the Dirichlet forms $(\Psi^{(n)}, H^{\psi_n}_0(D))$ converge to  $(\Psi, H^{\psi}_0(D))$ in the sense of Mosco.	
\end{theorem}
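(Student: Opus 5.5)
By Proposition \ref{prop7.11}, applied on $H=L^2(D)$ to the parts $(\Psi^{(n)},H^{\psi_n}_0(D))$ and $(\Psi,H^{\psi}_0(D))$, it suffices to prove that $G^{D,n}_\alpha h\to G^D_\alpha h$ in $L^2(D)$ for every $\alpha>0$ and $h\in L^2(D)$. We first reduce to $h\in C(\bar D)$. Each resolvent is a contraction on $L^2(D)$ with norm at most $1/\alpha$, uniformly in $n$, and $C(\bar D)$ is dense in $L^2(D)$. An $\varepsilon/3$ argument then passes the convergence from continuous $h$ to general $h$. For bounded Borel $h$ we have $G^{D,n}_\alpha h=R^{D,n}_\alpha h$ and $G^D_\alpha h=R^D_\alpha h$ $m$-a.e., so it is enough to show
\[
R^{D,n}_\alpha h\to R^D_\alpha h\quad\text{in }L^2(D),\qquad h\in C(\bar D).
\]

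Since $|R^{D,n}_\alpha h|\le\|h\|_\infty/\alpha$ uniformly in $n$, dominated convergence on the bounded set $D$ reduces this to pointwise convergence for every (or a.e.) $x\in D$. For that we realize $R^{D,n}_\alpha h(x)$ as $Y^{x,n}_0$ for a BSDE. Put $f(x,y)=h(x)-\alpha y$ and $g=0$. Then $f$ satisfies (H2)--(H3), and $u_n:=\BE^n_x\int_0^{\tau_D}(\ldots)$ solves the linear problem \eqref{eq1.11} with the operator $L^n-\alpha$. We would rather not change the generator, so instead we treat the killing $e^{-\alpha t}$ directly. We write
\[
R^{D,n}_\alpha h(x)=\BE\int_0^{\tau^{x,n}_D}e^{-\alpha t}h(X^{x,n}_t)\,dt,
\]
and repeat the argument of Theorem \ref{th3.4} with the (H2)--(H3) coefficient $\tilde f(t,x,y)=e^{-\alpha t}h(x)$. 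The time dependence can be absorbed by adding the deterministic coordinate $t$ to the state, as in Remark \ref{rem2.6}. Alternatively, we can avoid BSDEs altogether. The integrand $t\mapsto e^{-\alpha t}h(X_t)\fch_{\{t<\tau\}}$ is bounded by $\|h\|_\infty e^{-\alpha t}$, which is integrable on $[0,\infty)$, so no moment condition on $\tau_n$ is needed. It then suffices to have $X^{x,n}\to X^x$ in probability in the Skorokhod space (by (H1), \cite[Corollary 2]{JS} and Skorokhod representation) together with $\tau^{x,n}_D\to\tau^x_D$ in probability.

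The convergence of exit times is the key step. It is supplied by Proposition \ref{prop6.1} combined with Proposition \ref{prop4.3} and condition \eqref{eq5.1}, exactly as in the proof of Theorem \ref{th3.4}. Condition \eqref{eq5.1} holds for every $x\in D$ by Proposition \ref{prop6.5}, because $D$ is of class $C^0$ and $X$ satisfies (ACP).

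Given these two convergences, we pass to a subsequence converging a.s. Skorokhod convergence implies $X^{x,n}_t\to X^x_t$ for all $t$ outside the countable set of jump times of $X^x$. Together with $\tau_n\to\tau$, this gives $e^{-\alpha t}h(X^{x,n}_t)\fch_{\{t<\tau_n\}}\to e^{-\alpha t}h(X^x_t)\fch_{\{t<\tau\}}$ for a.e. $t$, a.s. Here we use that $h$ is continuous on $\bar D$ and that $X^{x,n}_t\in D$ for $t<\tau_n$; if needed we extend $h$ continuously to $\BR^d$. Dominated convergence in $dt\,d\BP$ then yields $R^{D,n}_\alpha h(x)\to R^D_\alpha h(x)$ for every $x\in D$. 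The subsequence argument upgrades this to the full sequence, which completes the proof.

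The main obstacle is the exit-time convergence, and it has already been isolated in Propositions \ref{prop6.1}, \ref{prop4.3} and \ref{prop6.5}. What remains is routine, namely the density reduction and the identification $G^{D,n}_\alpha=R^{D,n}_\alpha$.
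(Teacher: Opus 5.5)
Your proof is correct, but it takes a different route from the paper's.

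\textbf{The paper's route.} The paper never treats the linear case separately. It notes that $R^{D,n}_\alpha h$ and $R^D_\alpha h$ are themselves the undiscounted probabilistic solutions of \eqref{eq1.1} for $L^n$ and $L$, with $g=0$ and the monotone nonlinearity $f(x,y)=h(x)-\alpha y$. It then applies Corollary~\ref{cor5.5} with $p=2$ directly to arbitrary Borel $h\in L^2(D)$, and concludes with Proposition~\ref{prop7.11}. This is exactly the option you raised and set aside. No change of generator is needed, because the term $-\alpha y$ in $f$ absorbs the discount. Your aside about a time-dependent coefficient $e^{-\alpha t}h(x)$ is not covered by Theorem~\ref{th2.4} as stated, but you do not rely on it.

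\textbf{Your route.} You first reduce to $h\in C(\bar D)$, using only the uniform bound $\|G^{D,n}_\alpha\|_{L^2(D)\to L^2(D)}\le 1/\alpha$. You then obtain pointwise convergence of the discounted expectations by dominated convergence in $dt\,d\BP$. The only inputs are the Skorokhod-representation convergence $X^{x,n}\to X^x$ and the pathwise exit-time convergence from Propositions~\ref{prop6.1}, \ref{prop4.3} and \ref{prop6.5}.

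\textbf{What each buys.} Your argument is self-contained and elementary. It bypasses the BSDE stability Theorem~\ref{th2.4} and the Green-kernel $L^p$ estimates in the proof of Theorem~\ref{th3.10}. It also avoids the uniform exit-time moment bounds of Proposition~\ref{prop4.2}, since the factor $e^{-\alpha t}$ already supplies integrability. It makes clear that convergence of exit times is the only genuinely nontrivial ingredient of the Mosco statement. The paper's argument is a one-liner once Corollary~\ref{cor5.5} is available. It handles general $h\in L^2(D)$ without a separate density step, because the approximation of $f(\cdot,0)$ by continuous $\gamma_k$ is carried out inside Theorem~\ref{th3.10}. It also presents the Mosco result as a direct byproduct of the semilinear stability theory.
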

\begin{proof}

Fix $\alpha>0$ and let $h$ be a  Borel function in $L^2(D)$.
The functions $R^{D,n}_\alpha h$ and $R^D_\alpha h$ are the
probabilistic solutions corresponding to
$f(x,y)=h(x)-\alpha y$ and $g=0$.
Thus Corollary \ref{cor5.5}, applied with $p=2$, and the
identification of resolvents given above yield
\[
G^{D,n}_\alpha h=R^{D,n}_\alpha h
\longrightarrow R^D_\alpha h=G^D_\alpha h
\quad\text{in }L^2(D).
\]
The conclusion follows from Proposition \ref{prop7.11}.
\end{proof}

In \cite[Theorem 3.3]{CS} the result given in Theorem \ref{th7.12} is stated for any $d\ge1$ and  bounded open subset $D$ of  $\BR^d$. 
The example given below shows that some assumptions on the regularity of  $D$ are indispensable.

\begin{example}
\label{ex7.13}
Let $D=(-1,0)\cup(0,1)$. Let $X$ be a standard Brownian motion and for $n\ge1$ let $X^n$ be a symmetric L\'evy process in $\BR$ with generator
\[
L^n \varphi(x)=\frac{n^2}{2}\Big(\varphi\Big(x+\frac1n\Big)-2\varphi(x)
+\varphi\Big(x-\frac1n\Big)\Big),
\]
i.e. $X^n$ is a compound Poisson process with L\'evy measure $(n^2/2)(\delta_{1/n}+\delta_{-1/n})$ (see Example \ref{ex5.7}). 
An elementary computation shows that $\psi_n(\xi)=n^2(1-\cos(\xi/n))\rightarrow|\xi|^2/2$ as $n\rightarrow\infty$. The corresponding  Dirichlet forms on $L^2(\BR)$ are 
\[
\EE(u,u)=\frac12\int_{\BR}|u'(x)|^2\,dx,
\quad \FF=H^1(\BR),
\]
and
\[
\EE^{(n)}(u,u)=
\frac{n^2}{2}\int_{\BR}\Big(u\Big(x+\frac1n\Big)-u(x)\Big)^2\,dx,
\quad
\FF^{(n)}=L^2(\BR).
\]
The domains of parts of these forms on  $D$ are of the form  
\[
\FF^D=H_0^1(D)=H_0^1(-1,0)\oplus H_0^1(0,1),\qquad \FF^{(n),D}=L^2(D).
\]	
Define
\[
v(x)=(1-|x|)^+,\qquad v_n=v|_D \in L^2(D).
\]
Then $v_n\to v|_D$ strongly in $L^2(D)$. However, $v|_D\notin H_0^1(D)$.
Hence $\EE^D(v,v)=\infty$. On the other hand,
\[
\Big| v\Big(x+\frac1n\Big)- v(x)\Big|\le \frac1n
\quad\text{for }x\in\BR,\qquad  v(x)= v\Big(x+\frac1n\Big),\quad |x|\ge 1+1/n.
\]
Consequently,
\[
\EE^{(n),D}(v_n,v_n)=\frac{n^2}{2}\int_{\BR}
\Big( v\Big(x+\frac1n\Big)- v(x)\Big)^2\,dx
\le
\frac{n^2}{2}\cdot \frac1{n^2}\cdot\Big(2+\frac2n\Big)
=1+\frac1n.
\]
In particular, $\sup_{n\ge1}\EE^{(n),D}(v_n,v_n)<\infty$. We thus have 
\[
v_n\rightharpoonup v \quad\text{in }L^2(D),
\qquad
\liminf_{n\to\infty}\EE^{(n),D}(v_n,v_n)<\infty,
\qquad \EE^D(v,v)=\infty,
\]
so the forms $\EE^{(n),D}$ and $\EE^D$ do not satisfy (M1).  
\end{example}

In Theorem \ref{th7.12} the limiting operator $L$ is a symmetric L\'evy operator. Related stability results covering the case  
where $L$ is a symmetric divergence form operator are found in \cite{FGKV}.
 
\section{Appendix}

We  start with a useful general estimate for the first exit time of a general strong Markov process. We then apply it to get estimate of moments of   first exit times of weakly convergent L\'evy processes. 
\begin{proposition}
\label{prop8.1}
Let $((X_t)_{t\ge 0},(\BP_x)_{x\in E})$ be a strong Markov process with state space  $E$, $D$ be a Borel subset of $E$, and $\tau=\inf\{t>0:X_t\notin D\}$.
Assume that there exist $q>0$ and $p\in(0,1)$ such that
\begin{equation}
\label{eq8.1}
\inf_{x\in D} \BP_x(\tau\le q)\ge p.
\end{equation}
Then, for every $n\in\BN$,
\begin{equation}
\label{eq8.2}
\sup_{x\in D}\BP_x(\tau>n q)\le(1-p)^n.
\end{equation}
Consequently, for every $m\in\BN$,
\begin{equation}
\label{eq8.3}
\sup_{x\in D}\BE_x|\tau|^m\le mq^m \sum_{n=0}^\infty(n+1)^{m-1} (1-p)^n<\infty,
\end{equation}
and moreover, $\sup_{x\in D}\BE_x[e^{\theta \tau}]<\infty$ for every
$\theta<-(1/q)\ln(1-p)$.
\end{proposition}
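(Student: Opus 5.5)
The plan is to prove \eqref{eq8.2} by induction on $n$ using the strong (in fact simple) Markov property at the deterministic times $kq$, and then to read off the moment and exponential bounds from the geometric tail.

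\textbf{Step 1: the tail bound \eqref{eq8.2}.} For $n=0$ the bound is trivial. For $n=1$ it is \eqref{eq8.1}, since $\BP_x(\tau>q)=1-\BP_x(\tau\le q)\le 1-p$ for $x\in D$. Assume \eqref{eq8.2} holds for $n$. On $\{\tau>nq\}$ we have $X_{nq}\in D$, and the shift satisfies
\[
\tau=nq+\tau\circ\theta_{nq}.
\]
This identity holds on $\{\tau>nq\}$ because $\tau$ is a first exit time from $D$ after time $0^+$ and $X_s\in D$ for all $s\le nq$. The Markov property at time $nq$ then gives
\[
\BP_x(\tau>(n+1)q)=\BE_x\bigl[\fch_{\{\tau>nq\}}\BP_{X_{nq}}(\tau>q)\bigr]\le(1-p)\BP_x(\tau>nq)\le(1-p)^{n+1},
\]
and taking the supremum over $x\in D$ completes the induction. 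The only delicate point in the whole argument is the shift identity. It requires $\{\tau>nq\}\in\FF_{nq}$ and that $\tau\circ\theta_{nq}$ is the exit time of the shifted path. Both are standard properties of first exit times from a Borel set, given that $\tau$ is a stopping time for the usual augmented filtration.

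\textbf{Step 2: the moment bound \eqref{eq8.3}.} Use the layer-cake formula
\[
\BE_x\tau^m=\int_0^\infty m t^{m-1}\BP_x(\tau>t)\,dt,
\]
and split $[0,\infty)$ into the intervals $[nq,(n+1)q)$. On the $n$-th interval, bound $t^{m-1}\le((n+1)q)^{m-1}$ and $\BP_x(\tau>t)\le\BP_x(\tau>nq)\le(1-p)^n$. This gives
\[
\BE_x\tau^m\le mq^m\sum_{n\ge0}(n+1)^{m-1}(1-p)^n,
\]
uniformly in $x\in D$. The series converges by the ratio test, since $0<1-p<1$.

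\textbf{Step 3: the exponential moment.} Similarly, for $\theta>0$,
\[
\BE_xe^{\theta\tau}\le\sum_{n\ge0}e^{\theta(n+1)q}\BP_x(nq<\tau\le(n+1)q)\le e^{\theta q}\sum_{n\ge0}\bigl(e^{\theta q}(1-p)\bigr)^n.
\]
The last series is finite exactly when $e^{\theta q}(1-p)<1$, that is, when $\theta<-(1/q)\ln(1-p)$. For $\theta\le0$ the claim is trivial.
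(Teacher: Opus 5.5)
Your proposal is correct and follows the paper's proof. The paper applies the Markov property at time $q$ to get $a_{n+1}\le(1-p)a_n$ with $a_n=\sup_{x\in D}\BP_x(\tau>nq)$, while you apply it at time $nq$; this is an immaterial reordering, and your Steps 2--3 coincide with the paper's. One small nit: your first inequality in Step~3 omits the event $\{\tau=0\}$, which can have positive probability for a general Borel $D$. The paper covers it with the extra ``$1+$'' term, and including it changes nothing.
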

\begin{proof}
By \eqref{eq8.1},
\begin{equation}
\label{eq8.4}
\sup_{x\in D}\BP_x(\tau>q)\le 1-p.
\end{equation}
Fix $n\in\BN$ and $x\in D$. Using the strong Markov property  we get
\[
\BP_x(\tau>(n+1)q)=\BE_x\big(\fch_{\{\tau>q\}}
\BP_x(\tau>(n+1)q|\FF_{q})\big)
=\BE_x\big(\fch_{\{\tau>q\}}\,\BP_{X_{q}}(\tau>n q)\big).
\]
On the event $\{\tau>q\}$ we have $X_{q}\in D$. Hence
\[
\BP_{X_{q}}(\tau>n q) \le\sup_{y\in D}\BP_y(\tau>n q).
\]
Therefore,
\[
\BP_x(\tau>(n+1)q)\le\BP_x(\tau>q)\cdot \sup_{y\in D}\BP_y(\tau>n q).
\]
From this and  \eqref{eq8.4} we obtain
\[
a_{n+1}\le(1-p)\cdot a_n,\quad\mbox{where}\quad
a_n=\sup_{x\in D}\BP_x(\tau>nq).
\]
Since $a_1\le 1-p$ by \eqref{eq8.4}, induction yields $a_n\le(1-p)^n$, i.e.
\eqref{eq8.2} holds true.
To show (\ref{eq8.3}) we first note that  $\BP_x(\tau>t)\le\BP_x(\tau>nq)\le(1-p)^n$ for $t\in[nq,(n+1)q)$. Hence, for every $x\in D$,
\begin{align*}
\BE_x|\tau|^m
= m\int_0^\infty t^{m-1}\BP_x(\tau>t)\,dt 
&\le m\sum_{n=0}^\infty \int_{nq}^{(n+1)q} t^{m-1}(1-p)^n\,dt\\
&\le mq^m \sum_{n=0}^\infty (n+1)^{m-1}(1-p)^n <\infty,
\end{align*}
which implies (\ref{eq8.3}). Finally, for every 
$\theta\ge 0$, we have
\begin{align*}
\BE_xe^{\theta \tau}
\le 1+ \sum_{n=0}^\infty
\BE_x\big(e^{\theta \tau}\fch_{\{nq<\tau\le (n+1)q\}}\big)
&\le1+ \sum_{n=0}^\infty e^{\theta (n+1)q}\,\BP_x(\tau>nq)\\
&\le 1+e^{\theta q} \sum_{n=0}^\infty \big((1-p) e^{\theta q}\big)^n,
\end{align*}
which is finite whenever $(1-p)e^{\theta q}<1$, i.e.
$\theta<-(\ln(1-p))/q$.
\end{proof}

\begin{lemma}
\label{lem4.2}
Assume that $X$ is a nonconstant L\'evy process in $\BR^d$. Let
$R>0$ and  $\tau_R=\inf\{t>0:|X_t|\ge R\}$. Then for every $t>0$,
\[
\inf_{x\in B(0,R)}\BP_x(\tau_R\le t) \ge p,
\quad\mbox{where}\quad p=\BP_0(|X_{t}|\ge 2R).
\]
\end{lemma}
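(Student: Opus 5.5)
The plan is a direct comparison of events, using only translation invariance of the Lévy process. Fix $x\in B(0,R)$ and $t>0$. Under $\BP_x$ the process $X_t-x$ has the same law as $X_t$ under $\BP_0$. Hence
\[
\BP_x(|X_t-x|\ge 2R)=\BP_0(|X_t|\ge 2R)=p .
\]

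The key step is the inclusion of events
\[
\{|X_t-x|\ge 2R\}\subset\{\tau_R\le t\}\qquad \BP_x\mbox{-a.s.}
\]
Indeed, on the left-hand event the triangle inequality gives
\[
|X_t|\ge |X_t-x|-|x|\ge 2R-|x|>R,
\]
because $|x|<R$. By the definition $\tau_R=\inf\{s>0:|X_s|\ge R\}$, any time $s=t>0$ with $|X_s|\ge R$ forces $\tau_R\le t$. Combining the inclusion with the displayed identity gives $\BP_x(\tau_R\le t)\ge p$. Since the bound does not depend on $x$, taking the infimum over $x\in B(0,R)$ yields the assertion.

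I do not expect a genuine obstacle here; the argument is elementary. The only points to watch are, first, that the inequality is strict ($|X_t|>R$), so no path-regularity or right-continuity issue at the boundary arises. Second, the assumption that $X$ is nonconstant plays no role in the inequality itself. It matters only for the later use of this lemma together with Proposition \ref{prop8.1}: there one needs $p\in(0,1)$, which in turn requires $p>0$ for some choice of $t$ (for instance $t$ large). That positivity comes from the fact that a nonconstant Lévy process satisfies $|X_t|\to\infty$ in probability along a subsequence, or simply that its law at a large time is not concentrated on the ball $B(0,2R)$. I would mention this only as a remark after the proof, since the stated inequality holds for every $t>0$ regardless.
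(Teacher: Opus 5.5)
Your proof is correct and is essentially the paper's argument. Both use translation invariance to identify the probability of $\{|X_t-x|\ge 2R\}$ under $\BP_x$ with $p$, and then the triangle-inequality inclusion $\{|X_t-x|\ge 2R\}\subset\{|X_t|\ge R\}\subset\{\tau_R\le t\}$ for $|x|<R$.
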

\begin{proof}
Fix $t>0$.  For every  $x$ such that $|x|<R$ we have
\[
\{|X_{t}^0|\ge 2R\}\ \subseteq\ \{|x+X^0_{t}|\ge R\}
\ \subseteq\ \{\tau_R\le t\}.
\]
This proves the lemma.
\end{proof}

\begin{proposition}
\label{prop4.2}
Let $X, X^n$, $n\ge1$, be nonconstant L\'evy processes in $\BR^d$ and $D\subset \mathbb R^d$ be open and bounded.
If $X^n_1\arrowd X_1$, then for every $q\ge 1$, there exists $C_q>0$ such that 
\begin{equation}
\label{eq8.5}
\BE^n_x|\tau_D|^q\le C_q,\quad x\in D,\, n\ge 1.
\end{equation}
\end{proposition}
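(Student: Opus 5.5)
The plan is to reduce \eqref{eq8.5} to Proposition \ref{prop8.1}. That requires one pair $q_0>0$, $p\in(0,1)$ such that $\inf_{x\in D}\BP^n_x(\tau_D\le q_0)\ge p$ holds for all $n\ge n_0$. The finitely many remaining indices $n<n_0$ are handled one at a time by the same argument, using the corresponding constants for each fixed process. Taking the maximum of the finitely many resulting bounds then gives a uniform $C_q$. For non-integer $q$ I bound $\BE|\tau|^q\le 1+\BE|\tau|^m$ with an integer $m\ge q$, so the moment bound \eqref{eq8.3} suffices.

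Fix $R>0$ with $D\subset B(0,R)$. Then $\tau_D\le\tau_R$ under every $\BP^n_x$, $x\in D$. By Lemma \ref{lem4.2} applied with $t=1$ to $X^n$,
\[
\inf_{x\in D}\BP^n_x(\tau_D\le 1)\ge \inf_{x\in B(0,R)}\BP^n_x(\tau_R\le 1)\ge \BP^n_0(|X_1|\ge 2R).
\]
It therefore suffices to find a uniform positive lower bound for $p_n:=\BP^n_0(|X_1|\ge 2R)$ for large $n$.

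The main obstacle is that $\BP_0(|X_1|\ge 2R)$ may be zero for the limit, for instance for a bounded compound Poisson or a deterministic drift component at time $1$. To fix this I change the time scale. Since $X$ is nonconstant, $X_t$ is not a.s. bounded uniformly in $t$ in the relevant sense. Indeed, a nonconstant Lévy process satisfies $\BP_0(|X_t|\ge 2R)>0$ for some $t$. If $X$ has a nonzero drift, Gaussian part or Lévy measure, then $|X_t|\to\infty$ in probability along $t\to\infty$ or at least $\limsup_t\BP(|X_t|\ge 2R)>0$; for a nondegenerate infinitely divisible law, $X_t$ cannot stay in a fixed ball a.s. for all $t$, since otherwise its characteristic exponent would vanish. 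So I choose $t_0$ (taking $t_0$ integer is not needed) such that $\BP_0(|X_{t_0}|>2R)>0$, with strict inequality so that an open set is involved. From $X^n_1\arrowd X_1$ we get $\psi_n\to\psi$ pointwise, hence $X^n_{t_0}\arrowd X_{t_0}$ via the characteristic functions $e^{-t_0\psi_n}$. The portmanteau theorem applied to the open set $\{|y|>2R\}$ then gives
\[
\liminf_n\BP^n_0(|X_{t_0}|>2R)\ge \BP_0(|X_{t_0}|>2R)=:2p>0.
\]
Hence $p_n\ge p$ for $n\ge n_0$.

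Next I rerun Lemma \ref{lem4.2} with $t=t_0$ in place of $1$. This gives \eqref{eq8.1} with $q=t_0$ uniformly in $n\ge n_0$, and \eqref{eq8.3} yields a moment bound independent of $n$. For each $n<n_0$, $X^n$ is nonconstant, so the same choice of a suitable $t_n$ works for that process alone. One small point to check is that $p$ can be taken strictly less than $1$, which is harmless because we may shrink $p$.
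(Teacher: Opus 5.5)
Your proposal is correct and follows essentially the paper's route. You bound $\tau_D$ by the exit time from a ball, apply Lemma \ref{lem4.2} at a time $t_0$ with $\BP_0(|X_{t_0}|>2R)>0$, transfer to $X^n$ via $X^n_{t_0}\arrowd X_{t_0}$, invoke Proposition \ref{prop8.1} for large $n$, and treat the finitely many remaining $n$ separately. The differences are minor: you use the portmanteau lower bound on the open set $\{|y|>2R\}$ instead of choosing $t_0$ with $\BP_0(|X_{t_0}|=2R)=0$. Your argument for the existence of $t_0$ is looser than the paper's appeal to \cite[Theorem 24.3]{Sa}; a clean version is that if $|X_t|\le 2R$ a.s.\ for every $t$, then $\operatorname{Var}X_t=t\operatorname{Var}X_1$ and $\BE X_t=t\BE X_1$ stay bounded, which forces $X\equiv0$.
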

\begin{proof}
Choose $R, t_0>0$ so that $D\subset B(0,R):=\{x:|x|<R\}$,  $\BP_0(|X_{t_0}|=2R)=0$,
and $p:=\BP_0(|X_{t_0}|>2R)>0$ (see  \cite[Theorem 24.3]{Sa}).
Define $\tau_R$ as in Lemma \ref{lem4.2}  and set $\tau=\tau_D$ and $p_n=\BP^n_0(|X_{t_0}|>2R)$. 
Clearly  $\{\tau_R\le t_0\}\subset\{\tau\le t_0\}$, so by Lemma \ref{lem4.2},
\[
\inf_{x\in D}\BP^n_x(\tau\le t_0)\ge\inf_{x\in B(0,R)}\BP^n_x(\tau_R\le t_0)\ge p_n.
\]
Since $X^n_{t_0}\arrowd X_{t_0}$, we have $p_n\to p$. Thus, by Proposition \ref{prop8.1} and Lemma \ref{lem4.2}
we get \eqref{eq8.5} with a constant $C^0_q>0$ for $n\ge n_0$, where $n_0$ is such that $p_n>p/2$, $n\ge n_0$.  
By  Proposition \ref{prop8.1} and Lemma \ref{lem4.2}, for any $k\in \{1,\dots,n_0-1\}$, there exists $C^k_q$
such that $\BE^k_x|\tau_D|^q\le C^k_q$, $x\in D$. Therefore the asserted inequality holds with $C_q:= \max\{C^0_q,C^1_q,\dots, C^{n_0-1}_q\}$.
\end{proof}

In what follows  $D(\BR_+;\BR^d)$ is the space of $\BR^d$-valued c\`adl\`ag functions on $\BR_+=[0,\infty)$  equipped  
with the Skorokhod $J_1$ topology. For  $x\in D(\BR_+;\BR^d)$ we write  $\Delta  x_t=x_t-x _{t-}$. 

We shall prove a  result on convergence of first exit times and exit positions of c\`adl\`ag functions 
which converge  in the  $J_1$ topology. 
This result may be known, but we have not found a suitable reference. It may also be of independent interest.

Let $D$ be a  bounded open set  in $\BR^d$, $d\ge1$, and let $\bar D=D\cup\partial D$. For $x,x^n\in D(\BR_+;\BR^d)$ we set
\[
t=\inf\{s>0:x_s\notin D\},\qquad t_n=\inf\{s>0:x^n_s\notin D\}.
\]
\begin{proposition}
\label{prop6.1}
Assume that $x^n\rightarrow x$ in $D(\BR_+;\BR^d)$ as $n\rightarrow\infty$, $x_0\in D$, and moreover
\begin{enumerate}[\rm(a)]
\item for every $0<s<t $, if $\Delta x_s\neq0$, then $x_{s-}\notin\partial D$; 
		
\item	if $t<\infty$, then $t=\bar t$, where $\bar t=\inf\{s>0:x_s\notin \bar D\}$.
\end{enumerate}
Then $t_n\rightarrow t$. If, in addition $t<\infty$ and $x_{t-}\in D$ when $\Delta x_t\neq0$, then $x^n_{t_n}\rightarrow x_t$.
\end{proposition}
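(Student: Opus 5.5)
We will prove $t_n\to t$ by establishing $\liminf_n t_n\ge t$ and $\limsup_n t_n\le t$ separately. The tool throughout is the standard characterization of $J_1$ convergence. There exist increasing homeomorphisms $\lambda_n$ of $\BR_+$ with $\sup_s|\lambda_n(s)-s|\to0$ and $\sup_{s\le T}|x^n_{\lambda_n(s)}-x_s|\to0$ for every $T$ that is a continuity point of $x$; by enlarging $T$ slightly we may assume it is one.

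\emph{Lower bound.} Fix $s<t$ (if $t=\infty$, any $s$). We first check that $K=\{x_u:u\in[0,s]\}\cup\{x_{u-}:u\in(0,s]\}$ is a compact subset of $D$. It is the closure of the path on $[0,s]$, hence compact. It lies in $\bar D$, and we must rule out points on $\partial D$.
\begin{itemize}
\item Since $u<t$, each $x_u\in D$.
\item If $x_{u-}\in\partial D$ for some $u\le s$, then $\Delta x_u\neq 0$, because $x_u\in D$ while $x_{u-}\notin D$. This contradicts (a).
\end{itemize}
So $\delta=\operatorname{dist}(K,D^c)>0$. Uniform convergence of $x^n\circ\lambda_n$ on $[0,s']$, for a continuity point $s'>s$ with $s'<t$, then keeps $x^n$ inside $D$ on $[0,\lambda_n(s)]$ for large $n$. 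Hence $t_n\ge\lambda_n(s)\to s$, and so $\liminf_n t_n\ge t$.

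\emph{Upper bound.} We may assume $t<\infty$. By (b), $t=\bar t$, so for every $\varepsilon>0$ there is $s\in(t,t+\varepsilon)$ with $x_s\notin\bar D$, and by right continuity we may take $s$ to be a continuity point of $x$. Since $\bar D^c$ is open and $x^n_{\lambda_n(s)}\to x_s$, we get $x^n_{\lambda_n(s)}\notin D$ for large $n$. Thus $t_n\le\lambda_n(s)\to s<t+\varepsilon$. This is where (b) is essential: without it the path could merely touch $\partial D$, and the approximants could stay inside $D$.

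\emph{Exit positions.} Because $t_n\to t$, a standard property of $J_1$ convergence says that $x^n_{t_n}$ and $x^n_{t_n-}$ accumulate only at $x_{t-}$ or $x_t$. We will show that $x^n_{t_n}$ cannot approach $x_{t-}$ when $x_{t-}\neq x_t$.

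\textbf{Case 1: $\Delta x_t=0$.} Then both accumulation points equal $x_t$, and we are done.

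\textbf{Case 2: $\Delta x_t\neq0$.} By hypothesis $x_{t-}\in D$. Set $\mu_n=\lambda_n^{-1}(t_n)$; since $t_n\to t$ and $\lambda_n\to\mathrm{id}$ uniformly, $\mu_n\to t$. Apply the lower-bound argument on $[0,s]$ for $s<t$ close to $t$: the set $\{x_u:u<t\}\cup\{x_{t-}\}$ has closure in $D$ (it avoids $\partial D$ by the argument above together with $x_{t-}\in D$). This gives uniform distance to $D^c$, so $x^n$ stays in $D$ on $[0,\lambda_n(t))$ for large $n$, i.e.\ $\mu_n\ge t$. Once $\mu_n\ge t$, uniform convergence on $[t,t+\eta]$ together with right continuity at $t$ gives $x^n_{t_n}=x^n_{\lambda_n(\mu_n)}$ close to $x_{\mu_n}$, which is close to $x_t$.

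\textbf{Main obstacle.} The delicate step is Case 2. One must ensure that the time change does not make $t_n$ occur just before $\lambda_n(t)$, where the values of $x^n$ are close to $x_{t-}$. The key point is that $x_{t-}\in D$ leaves a positive margin to $D^c$ up to time $t$, including at the left limit, and this margin is what forces $\mu_n\ge t$.
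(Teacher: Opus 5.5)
Your proof is correct, but it takes a different route from the paper's for the lower bound and the exit position.

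\emph{Lower bound.} The paper argues by contradiction along subsequences. If $t_{n'}\to t'<t$, then $x^{n'}_{t_{n'}}\in D^c$ can only accumulate at $x_{t'}$ or $x_{t'-}$, so one of these lies in the closed set $D^c$. Since $x_{t'}\in D$, it must be $x_{t'-}\in\partial D$, and then (a) forces $x_{t'}=x_{t'-}\in D^c$, contradicting $t'<t$. You instead work directly with the time changes $\lambda_n$ and a uniform margin. By (a), the compact closure of the path on $[0,s]$ lies in $D$, which gives $t_n\ge\lambda_n(s)$ for large $n$. This avoids the limit-point lemma here. It also yields slightly more: the approximants cannot exit before the time-changed limit path.

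\emph{Upper bound.} This is the same as the paper's: pick a continuity point after $t$ at which $x$ lies outside $\bar D$.

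\emph{Exit position.} Here your argument is heavier than necessary, and the ``main obstacle'' you single out is not really one. By right continuity, $x^n_{t_n}\in D^c$, and $D^c$ is closed, so every accumulation point of $x^n_{t_n}$ lies in $D^c$. This immediately rules out $x_{t-}\in D$ and leaves $x_t$ as the only possible limit; that is the paper's one-line argument. You already had both ingredients in hand: the limit-point property and the hypothesis $x_{t-}\in D$. Your margin argument showing $\lambda_n^{-1}(t_n)\ge t$ is valid, just not needed.
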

\begin{proof}
We first prove the first assertion in the case  $t=\infty$. Striving for a contradiction, suppose that $t_{n'}\rightarrow t'<\infty$ 
for some subsequence $\{t_{n'}\}$.  Then $x_{t'}$, $x_{t'-}$ are the only possible limit points of the sequence $\{x^{n'}_{t_{n'}}\}$. 
Since $x^{n'}_{t_{n'}}\in D^c$ for each $n'$, the limit points belong to $D^c$. If $x_{t'}\in D^c$ is the limit point, then clearly $t\leq t'$ which is a contradiction.
Consider now the case where  $x_{t'-}\in D^c$ is the limit point.
Then, by assumption (a), $x_{t'}\in D^c$, which means as before that $t\leq t'$ and leads to a contradiction.
	
Assume now that $t<\infty$.
As in  the first   part of the proof one can show that there is no subsequence $\{t_{n'}\}$ such that $t_{n'}\rightarrow t'<t$. Suppose that there is a 
subsequence  $\{t_{n''}\}$ such that $t_{n''}\rightarrow t''>t$. Then, by assumption (b) and
right-continuity of $x$, there are  $t_1,t_2\in(t,t'')$
such that $x_s\in(\bar D)^c$ for $s\in(t_1,t_2]$. Let $s^*$ be such that
\begin{equation}
\label{eq8.6}
t<t_1<s^*<t_2<t''\quad\mbox{and}\quad \Delta x_{s^*}=0.
\end{equation}
Then $x^{n''}_{s^*}\rightarrow x_{s^*}$ and $x^{n''}_{s^*}\in(\bar D)^c$ for all sufficiently large $n''$. This implies that $s^*\ge t_{n''}$ for a 
sufficiently large $n''$. Hence $s^*\ge t''$, which contradicts (\ref{eq8.6}). 

We now turn to the proof of the second assertion. We already know that $t_n\rightarrow t$, so $x_t,x_{t-}$ are the only 
possible limit points of the sequence $\{x^n_{t_n}\}$. Since $x^n_{t_n}\in D^c$, $n\ge1$, these limit points belong to $D^c$. 
Hence, in fact, $x_t$ is the only limit point.
\end{proof}

\begin{example}
\label{ex6.2}
(i) Let $D=(-1,1)$ and 
\[
x_s=\begin{cases}
s, &s\in[0,1),\\
1/2, &s\in[1,2),\\
2, & s\in[2,\infty),
\end{cases}
\qquad x^n_s=\begin{cases}
s+1/n, &s\in[0,1),\\
1/2, &s\in[1,2),\\
2, & s\in[2,\infty).
\end{cases}
\]
Then $x^n\rightarrow x$ in $D(\BR_+;\BR)$ but condition (a) of Proposition \ref{prop6.1} is not satisfied because 
$t=2$ and for $s=1$ we have $\Delta x_s=-1/2$ and $x_{s-}=1\in\partial D$. Clearly, we have $t_n=1-1/n\rightarrow1<2=t$.\smallskip\\
(ii)  Let $D=(-1,1)$ and 
\[
x_s=\begin{cases}
0, &s\in[0,1),\\
1, &s\in[1,2),\\
2, & s\in[2,\infty),
\end{cases}
\qquad x^n_s=\begin{cases}
0, &s\in[0,1),\\
1-1/(2n), &s\in[1,2),\\
2, & s\in[2,\infty).
\end{cases}
\]
Then $x^n\rightarrow x$ in $D(\BR_+;\BR)$ but condition (b)  is not satisfied because $t=1$ and $\bar t=2$. Clearly, $t_n=2\nrightarrow t=1$.
\smallskip\\
(iii) Let $D=(-1,1)$ and
\[
x_s=\begin{cases}
s, &s\in[0,1),\\
2, &s\in[1,\infty),
\end{cases}
\qquad 
x^n_s=\begin{cases}
s+1/n, & s\in[0,1),\\
2, &s\in[1,\infty).
\end{cases}
\]
Then $x^n\rightarrow x$, conditions (a), (b) are satisfied  and $t_n=1-1/n\rightarrow1=t$. However, $x^n_{t_n}=1\nrightarrow x_t=2$.
\end{example}

\section*{Statements and Declarations}

\noindent\textbf{Competing interests.}
The authors have no relevant financial or non-financial interests to disclose.

\section*{Data Availability Statement}

No datasets were generated or analysed during the current study.

\section*{Acknowledgments}
This work was supported by the Polish National Science Centre\\
(Grant No.~2022/45/B/ST1/01095).

\end{document}